\numberwithin{equation}{section}
\newtheorem{theorem}{Theorem}[section]     
\newtheorem{definition}[theorem]{Definition}
\newtheorem{defi}[theorem]{Definition}
\newtheorem{proposition}[theorem]{Proposition}
\newtheorem{lemma}[theorem]{Lemma}
\newtheorem{rmk}[theorem]{Remark}
\newtheorem{corollary}[theorem]{Corollary}
\newtheorem{remark}[theorem]{Remark}
\newcommand{\cL}{\mathcal{L}}
\def\d{\partial}
\def\f{\frac}
\def\proof{\noindent\hspace{2em}{\itshape Proof: }}
\def\QEDclosed{\mbox{\rule[0pt]{1.3ex}{1.3ex}}} 
\def\QED{\QEDclosed} 
\def\endproof{\hspace*{\fill}~\QED\par\endtrivlist\unskip}
\newcommand{\eqa}{\begin{eqnarray}}
\newcommand{\eeqa}{\end{eqnarray}}
\newcommand{\beq}{\begin{equation}}
\newcommand{\eeq}{\end{equation}}
\numberwithin{equation}{section}
\begin{document}
\title{A Dubrovin-Frobenius manifold structure of NLS type on the orbit space of $B_n$}

\author{Alessandro Arsie}
\address{A.~Arsie:\newline Department of Mathematics and Statistics, The University of Toledo,\newline 2801W. Bancroft St., 43606 Toledo, OH, USA}
\email{alessandro.arsie@utoledo.edu}

\author{Paolo Lorenzoni}
\address{P.~Lorenzoni:\newline Dipartimento di Matematica e Applicazioni, Universit\`a di Milano-Bicocca, \newline
Via Roberto Cozzi 53, I-20125 Milano, Italy and INFN sezione di Milano-Bicocca}
\email{paolo.lorenzoni@unimib.it}

\author{Igor Mencattini}
\address{I.~Mencattini:\newline Instituto de Ci\^encias Matem\'aticas e de Computa\c c\~ao, Universidade de S\~ao Paulo,\newline
S\~ao Carlos, SP, Brazil}
\email{igorre@icmc.usp.br}

\author{Guglielmo Moroni}
\address{G.~Moroni:\newline Dipartimento di Matematica e Applicazioni, Universit\`a di Milano-Bicocca, \newline
Via Roberto Cozzi 53, I-20125 Milano, Italy and INFN sezione di Milano-Bicocca}
\email{g.moroni9@campus.unimib.it}


\maketitle

\begin{abstract}
Generalizing a construction presented in \cite{ALcomplex}, we show that the orbit space of $B_2$ less the image of coordinate lines under the quotient map is equipped with two Dubrovin-Frobenius manifold structures which are related respectively to the defocusing and the focusing nonlinear Schr\"odinger (NLS) equations. Motivated by this example, we study the case of $B_n$ and we show that the defocusing case can be generalized to arbitrary $n$ leading to a Dubrovin-Frobenius manifold structure on the orbit space of the group. The construction is based on the existence of a non-degenerate and non-constant invariant bilinear form that plays the role of the Euclidean metric in the  Dubrovin-Saito standard setting. Up to $n=4$ the prepotentials we get coincide with those
 associated with constrained KP equations discussed in \cite{LZZ}.
\end{abstract}

\tableofcontents

\section{Introduction}
According to a classical theorem due to Chevalley, given a finite group generated by (pseudo-)reflections the invariant functions ring of the orbit space of the group is  a polynomial ring generated by
 a set of invariant polynomials,  called basic invariant polynomials.

In general, the basic invariants are not uniquely defined, while their
degrees depend only on the choice of the  group.

 In the case of Coxeter groups, a procedure to select uniquely a set of basic invariant polynomials was proposed
 by Saito in \cite{Sa} and it is based on the notion of flat structure on the orbit space.  
 
 An explicit construction of polynomial basic invariants was implemented
 by Saito, Yano and Sekiguchi in \cite{SYS} through a case by case analysis (with the exception of the group $E_7$ and $E_8$).

In 1993 Dubrovin interpreted Saito's construction in terms of bihamiltonian geometry and Dubrovin-Frobenius manifolds \cite{DCoxeter}. He showed that starting
 from the Euclidean metric (defined on the Euclidean space where the group acts) and from the flat structure on the orbit space, 
 it is possible to define a flat pencil of metrics. This notion had been previously introduced by Dubrovin himself in the study of a special class
 of bihamiltonian structures related to Dubrovin-Frobenius manifolds \cite{du97}. 
Under suitable additional assumptions (exactness, homogeneity and Egorov property)
 flat pencil of metrics are in one-to-one correspondence with Dubrovin-Frobenius
 manifolds. Using this correspondence, he defined a polynomial Dubrovin-Frobenius manifold structure on the orbit space of Coxeter groups. The polynomiality of the prepotential for any Coxeter group was conjectured by Dubrovin and proved by Hertling
 in \cite{He}. It was observed in \cite{DaZ} that in the case of groups $B_n$ and $D_n$ there are different possible choices of the unit vector field leading to different Dubrovin-Frobenius manifold structures.

In 2004, in the paper \cite{Dalmost}, Dubrovin introduced the notion of almost duality and showed that in the case of a Coxeter group the almost dual structure coincides
 with a universal structure introduced by Veselov in \cite{Ve} (Veselov's $\vee$-system). In the same paper he found a generalization of Saito's construction
 for Shephard groups (symmetry groups of regular complex polytopes \cite{Sh1}). The role of the Euclidean metric in this case is played by a flat metric defined by the
 Hessian of the lowest degree basic invariant. Flatness of this metric relies on a previous result of Orlik and Solomon \cite{OS}.
 
 It turned out that the Dubrovin-Frobenius structure obtained in this way on the orbit space of a Shephard group is isomorphic to the Dubrovin-Frobenius structure
 defined on the orbit space of the associated Coxeter group.
 
In 2015 Kato, Mano and Sekiguchi proposed a further generalization of Dubrovin-Saito construction in the case of well-generated complex reflection groups \cite{KMS2}. 
 The outcome of their construction is not a Dubrovin-Frobenius manifold but a flat F-manifold \cite{manin} or, using the language of meromorphic connections, a Saito structure without metric \cite{S}.  

In 2017 two of the authors of the present paper proposed an alternative construction of (bi)-flat F-manifolds on the orbit space of complex reflection groups \cite{ALcomplex}. The starting point of \cite{ALcomplex} is a ''dual flat structure" defined by a family of flat connections of Dunkl-Kohno type associated with a complex reflection group \cite{DO,Ko,Looijenga}. This family of connections depends on the choice of an invariant function on the set of reflecting hyperplanes: for each hyperplane one has to choose a "weight" and the weights assigned
 to different hyperplanes must coincide if the hyperplanes belong to the same orbit under the action of the relevant group. 
 
  A standard choice consists in assigning to each hyperplane the order of the corresponding
 reflection. In all the examples considered in \cite{ALcomplex} this choice corresponds
 to Kato-Mano-Sekiguchi flat F-manifold structure. Other admissible choices lead to
 different structures and conjecturally the orbit space of a well-generated complex reflection is equipped with a $(N-1)$-parameter family of flat F-manifold structures,
 where $N$ is the number of orbits for the action of the group on the set of reflecting
 hyperplanes, see \cite{ALPreviato}. This conjecture has been verified for Weyl groups of rank $2$, $3$ and  $4$, for the dihedral groups $I_2(m)$, for any of the exceptional well-generated complex reflection groups of rank $2$ and $3$ and for any of the groups $G(m,1,2)$
 and $G(m,1,3)$. In \cite{ALcomplex} it was also pointed out that in the case of Shephard
 groups, in general, the Kato-Mano-Sekiguchi construction does not reduce to Dubrovin's construction on the orbit space of these groups.

An alternative proof of the existence of the "standard" Kato-Mano-Sekiguchi structure was obtained starting from a dual structure (equivalent to the flat structure considered in \cite{ALcomplex})
 by Konishi, Minabe and Shiraishi in \cite{KMSh}.

In the present paper, combining the Dubrovin-Saito approach with the approach pursued in \cite{ALcomplex}, we present a further generalization of the Dubrovin-Saito procedure for the series $B_n$. In the first part of the paper, exploiting the flexibility of the second approach
 we study flat F-manifold structures obtained from a dual flat structure of the form outlined above in the case of $B_2,B_3$ and $B_4$. In all these cases besides the one-parameter family obtained in \cite{ALcomplex,ALPreviato} there are additional Dubrovin-Frobenius structures associated with a suitable choice of the weights in the definition of the dual connection and of the dual product. The corresponding solutions of WDVV equations are no longer polynomial due to appearance of a logarithmic term. 
For $n=2$ it coincides with the Frobenius manifold structure
associated with focusing and defocusing NLS equation depending on the choice of the weights: assigning weight zero to the coordinate axes and a non vanishing weight to the remaining mirrors one gets the defocusing case, while the opposite choice leads to the  focusing case. The first choice survives also in the case $n=3$ and in the case $n=4$
 leading to similar solutions of WDVV equations. These solutions appear in literature (for arbitrary $n$) in connection with constrained KP equation, see \cite{LZZ}.
As a byproduct of these computations, we get a bilinear form invariant with respect to the action of $B_n$.
 In order to prove the existence of a Dubrovin-Frobenius structure for any $n$, in the second part of the paper, we apply the Dubrovin-Saito procedure to
 the invariant bilinear form obtained in the first part. The main difficulty encountered in the present case, if compared with the standard one, is due to the fact the flat pencil obtained  
 applying the first part of the procedure is not regular and, as a consequence, it is not possible to define all the structure constants of the product in terms of
 the Christoffel symbols of the intersection form. This is also the reason of the presence of a logarithmic term in the Dubrovin-Frobenius prepotential.
\newline
\newline
\noindent{\bf Acknowledgements}.  We thank Giordano Cotti for useful comments. P.~L. is supported by funds of H2020-MSCA-RISE-2017 Project No. 778010 IPaDEGAN.
 Authors are also thankful to GNFM - INdAM for supporting activities that contributed to the research reported in this paper.

\section{Bi-flat $F$-manifolds and Frobenius manifolds}\label{sec:biflat}
\begin{defi}\cite{manin}\label{defflatFman}
A {\rm flat $F$-manifold} is a quadruple $(M,\circ,\nabla,e)$ where $M$ is a manifold, 
 $\circ : \mathcal{X}(M) \times \mathcal{X}(M) \rightarrow \mathcal{X}(M)$ is a product ($\mathcal{X}(M)$ is the $C^{\infty}$-module of local vector fields),
$\nabla$ is a connection on the tangent bundle $TM$
and $e$ is a distinguished vector field, satisfying the following axioms:
\begin{enumerate}
\item for every $\lambda\in\mathbb R$, $\nabla_{(\lambda)}:=\nabla+\lambda \circ$ 
is a flat and torsionless connection.
\item $e$ is the unit of the product $\circ$, i.e. for any $X\in \mathcal{X}(M)$, one has $e\circ X=X\circ e=X$.
\item $e$ is flat: $\nabla e=0$.
\end{enumerate}
\end{defi}
Manifolds  equipped with a product $\circ$, a connection $\nabla$ and a vector field $e$ satisfying conditions $(1)$ and $(2)$ above will be called {\it almost flat $F$-manifolds}.

In local coordinates $(u_1, \dots u_n)$, denoting with $c^i_{jk}$ the structure constants of the product $\circ$ and with $\Gamma^i_{jk}$ the Christoffel symbols of the connection $\nabla$, Condition $1$ in Definition \ref{defflatFman} reads 
\beq\label{defortor}
T^{(\lambda)k}_{ij}=T^k_{ij}+\lambda(c^k_{ij}-c^k_{ji})=0,
\eeq
and
\beq\label{deforcurv}
R^{(\lambda)k}_{ijl}=R^k_{ijl}+\lambda(\nabla_i c^k_{jl}-\nabla_j c^k_{il})+\lambda^2(c^k_{im}c^m_{jl}-c^k_{jm}c^m_{il})=0,
\eeq
where $T^{(\lambda)k}_{ij}$ and $R^{(\lambda)k}_{ijl}$ are the torsion and the curvature tensor of the connection $\nabla_{(\lambda)}$, while $T^k_{ij}$ and $R^k_{ijl}$ are the torsion and the curvature tensor of the connection $\nabla$. The identity principle of polynomials applied to \eqref{defortor} and \eqref{deforcurv} yields the following consequences: 
\begin{enumerate}
\item the connection $\nabla$ is torsionless,
\item the product  $\circ$ is commutative,
\item the connection $\nabla$ is flat,
\item the tensor field $\nabla_l c^k_{ij}$ is symmetric in the lower indices,
\item the product $\circ$ is associative.
\end{enumerate}
From the above conditions it follows that in flat coordinates the structure constants of 
 the product can be written as second order partial derivatives of a vector field 
\beq\label{eqvectpot}c^i_{jk}=\d_j\d_k F^i,\eeq
satisfying a non-trivial system of PDEs called \emph{generalized WDVV equations} or \emph{oriented associativity equations}:
\beq\label{asseqvectpot}
\d_j\d_l F^i\d_k\d_mF^l=\d_k\d_l F^i\d_k\d_mF^l.
\eeq
Dubrovin-Frobenius manifolds are  flat F-manifolds equipped with a homogeneous invariant
 pseudo-Riemannian metric $\eta$ compatible with the connection $\nabla$. More precisely
\begin{defi}\label{Frobeniusdef}
A Dubrovin-Frobenius manifold is a flat $F$-manifold  $(M, \circ, \nabla, e)$ equipped with a metric $\eta$ and a distinguished vector field $E$, called the Euler vector field, satisfying the following conditions
\beq\label{eq2}
\nabla\eta=0,
\eeq
\beq\label{eq3}
\eta(X\circ Y,Z)=\eta(X, Y\circ Z), \; \forall X, Y, Z\in \mathcal{X}(M),
\eeq
$$[e,E]=e,\quad {\rm Lie}_E \circ=\circ,$$
and
$${\rm Lie}_E\eta=(2-d)\eta,$$
where $d$ is a constant called the charge of the Frobenius manifold. 
The latter requirement means that $E$ acts as a conformal Killing vector field of the metric $\eta$.
\end{defi}
In flat coordinates, the existence of an invariant metric implies $F^i=\eta^{il}\d_l F$ 
 for a scalar function $F$ called the prepotential of the Dubrovin-Frobenius manifold. Using this
fact, it is immediate to see that the associativity equations \eqref{asseqvectpot} become the usual WDVV associativity equations:
\beq\label{WDVVeq}
\d_j\d_h\d_i F\eta^{il}\d_l\d_k\d_m F=\d_j\d_k \d_iF\eta^{il}\d_l\d_h\d_m F.
\eeq

It is worth noticing that every Dubrovin-Frobenius manifold comes together with \emph{an almost dual}, i.e. a \emph{second} almost flat $F$-manifold structure. More precisely
\begin{theorem}\cite{Dalmost}\label{thalmostduality}
Given a Dubrovin-Frobenius manifold $(M, \circ, e, E, \eta, \nabla)$, consider the open set $U$ where the endomorphism of the tangent bundle $E\circ$ is invertible and consider the corresponding intersection form, i.e. the contravariant metric $g:=(E\circ)\,\eta^{-1}$. Then on $U$, the data given by 
\begin{enumerate}
\item the Levi-Civita connection $\tilde\nabla$ of $g$, 
\item the Euler vector field $E$ and 
\item a  {\rm dual product} defined as $X*Y=(E\circ)^{-1}\,X\circ Y,\quad\forall X,Y\in \mathcal{X}(U)$,
\end{enumerate}
define an almost flat $F$-manifold with unit $E$ and invariant metric $g^{-1}$. 
\end{theorem}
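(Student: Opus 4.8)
The plan is to verify the three defining axioms of an almost flat $F$-manifold for the quadruple $(U, *, \tilde\nabla, E)$, namely: (i) for every $\lambda\in\mathbb{R}$ the connection $\tilde\nabla_{(\lambda)}:=\tilde\nabla+\lambda\,*$ is flat and torsionless; (ii) $E$ is the unit for $*$; and additionally that $g^{-1}$ is an invariant metric for $*$. Axiom (ii) is immediate: since $X*Y=(E\circ)^{-1}X\circ Y$ and $e$ is the unit of $\circ$, one has $E*X=(E\circ)^{-1}(E\circ X)=X$, and commutativity of $*$ follows from that of $\circ$. Invariance of $g^{-1}$, i.e. $g^{-1}(X*Y,Z)=g^{-1}(X,Y*Z)$, is a short computation using $g=(E\circ)\eta^{-1}$ together with the $\eta$-invariance \eqref{eq3} of $\circ$ and the symmetry of $E\circ$ as an endomorphism with respect to $\eta$; I would record this first since it is the algebraic backbone.

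The substance is axiom (i). Torsionlessness of $\tilde\nabla$ is automatic (Levi-Civita), and torsionlessness of $\tilde\nabla_{(\lambda)}$ then reduces, by \eqref{defortor}, to commutativity of $*$, already checked. Flatness of $\tilde\nabla_{(\lambda)}$ for all $\lambda$ is the heart of the matter. One route is to invoke Dubrovin's theory of flat pencils of metrics: the pair $(g,\eta^{-1})$ — i.e. the intersection form and the (inverse) Frobenius metric — forms a flat pencil $g+\lambda\,\eta^{-1}$, a standard fact in Dubrovin-Frobenius theory that follows from $\nabla\eta=0$, the Euler vector field axioms, and the associativity of $\circ$. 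From flatness of this pencil of metrics one extracts flatness of the associated one-parameter family of connections; the key identity is that the difference of the Levi-Civita connections of $g$ and of $\eta^{-1}$ is, up to the factor $E\circ$, exactly the product $\circ$, so that $\tilde\nabla+\lambda\,*$ is conjugate (via the endomorphism $E\circ$ or via a rescaling) to the Levi-Civita connection of $g+\lambda^{-1}\eta^{-1}$, hence flat. A more self-contained alternative is to write everything in the flat coordinates of $\eta$, express $\tilde\nabla$'s Christoffel symbols through $g$ and $\partial F$, and check the curvature of $\tilde\nabla_{(\lambda)}$ vanishes identically in $\lambda$ by direct use of the WDVV equations \eqref{WDVVeq} — the $\lambda^2$ term gives associativity of $*$, the $\lambda^1$ term gives that $\tilde\nabla_l$ of the structure constants of $*$ is symmetric, and the $\lambda^0$ term is flatness of $\tilde\nabla$.

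I expect the main obstacle to be the flatness of $\tilde\nabla_{(\lambda)}$ at first and second order in $\lambda$, i.e. showing that the curvature contributions linear and quadratic in $\lambda$ vanish; this is exactly where the WDVV/associativity input enters and where one must be careful that the relevant identities survive restriction to the open set $U$ where $E\circ$ is invertible (the inverse $(E\circ)^{-1}$ is what makes $*$ well defined, and one must check its covariant derivatives interact correctly with $\tilde\nabla$). Everything else — the unit property, commutativity, and the invariance of $g^{-1}$ — is routine linear algebra plus the already-assumed axioms of the Dubrovin-Frobenius structure.
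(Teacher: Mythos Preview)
The paper does not give its own proof of this theorem: it is quoted verbatim from Dubrovin's almost-duality paper \cite{Dalmost} and used as a black box, so there is no argument in the present paper to compare your proposal against. Your outline is a reasonable sketch of how the result is actually established in \cite{Dalmost} (unit and invariance are indeed routine; the real content is the flatness of $\tilde\nabla_{(\lambda)}$, and Dubrovin's original proof does go through the WDVV equations in $\eta$-flat coordinates). One caution: your first ``route'' via the flat pencil is morally right but the precise claim that $\tilde\nabla+\lambda*$ is \emph{conjugate} to the Levi-Civita connection of $g+\lambda^{-1}\eta^{-1}$ is not literally correct and would need to be replaced by the actual computation relating the contravariant Christoffel symbols of the pencil to the structure constants of $*$; if you want a clean argument, the direct verification of the $\lambda^0,\lambda^1,\lambda^2$ curvature terms in flat coordinates (your second route) is the one Dubrovin carries out.
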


 Replacing $\tilde\nabla$ with $\nabla^*:=\tilde\nabla+\bar\lambda*$ (for a suitable value of $\bar\lambda$) one obtains a flat connection $\nabla^*$ satisfying  $\nabla^* E= 0$. In this way, for any given Dubrovin-Frobenius manifold $(M,\eta,\circ,e,E,\nabla)$, there are two flat structures:
\begin{itemize}
\item the ``natural" flat structure $(\nabla,\circ,e)$, 
\item the ``dual" flat structure $(\nabla^*,*,E)$.
\end{itemize}
It turns out that these two structures are related by the following condition:
\beq\label{eq5}
(d_{\nabla}-d_{\nabla^*})(X\circ)=0, \; \forall X\in \mathcal{X}(U),
\eeq
where $d_{\nabla}$ is the exterior covariant derivative. 
\begin{defi}\cite{ALbiflat}\label{biflatdef}
A {\rm bi-flat $F$-manifold} $M$ is a manifold equipped with two different flat $F$-structures $(\nabla,\circ,e)$ and  $(\nabla^{*},*,E)$
related by the following conditions
\begin{enumerate}
\item $E$ is an Euler vector field.
\item $*$ is the dual product defined by $E$.
\item  $\nabla$ and $\nabla^*$ satisfy condition \eqref{eq5}. 
\end{enumerate}
\end{defi}

\section{Bi-flat F-manifolds and complex reflection groups}\label{biflatcomplex}
A complex (pseudo)-reflection is a unitary transformation of $\mathbb{C}^n$ of finite period that leaves invariant a hyperplane.
 A complex reflection group is a finite group generated by (pseudo)-reflections.
  Irreducible finite complex reflection groups were classified by Shephard and Todd in \cite{ST} and consist of an infinite family depending on $3$ positive integers and $34$ exceptional cases.
 Well-generated irreducible complex reflection groups are irreducible complex reflection groups of rank $n$ generated by $n$ (pseudo)-reflections.

\subsection{Flat structures associated with Coxeter groups}
A Coxeter group is automatically well-generated. For Coxeter groups we have the following result.
\begin{theorem}[Dubrovin, \cite{DCoxeter}]\label{Dcoxeterth}
The orbit space of a finite Coxeter group is equipped with a Dubrovin-Frobenius manifold structure $(\eta,\circ,e,E)$ where
\begin{enumerate}
\item The invariant metric $\eta$ coincides with the bilinear form constructed in
 \cite{Sa,SYS}. The corresponding set of basic invariant are called {\it Saito flat coordinates}. 
\item In the Saito flat coordinates 
$$e=\f{\d}{\d u_n},\,\,E=\sum_{i=1}^n\left(\f{d_i}{d_n}\right)u_i\f{\d}{\d u_i}.$$
\end{enumerate}
where $d_i$ are the degrees of the invariant polynomials $u_i$
 and $2=d_1<d_2\le d_3\le\dots\le d_{n-1}<d_n$ ($d_n$ is the Coxeter number). 
\end{theorem}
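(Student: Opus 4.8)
The plan is to reconstruct the Dubrovin--Frobenius structure, in Dubrovin's spirit, from a \emph{flat pencil of metrics} on the orbit space built out of the Euclidean metric of $V=\reals^n$ and of its Lie derivative along the unit field. By Chevalley's theorem the invariant ring is $\reals[u^1,\dots,u^n]$ with $\deg u^i=d_i$ and $2=d_1<d_2\le\cdots\le d_{n-1}<d_n=h$. First I would introduce the contravariant \emph{intersection form} $g^{ij}(u):=(\mathrm{d}u^i,\mathrm{d}u^j)$, where $(\,,\,)$ is the Euclidean metric on $V$: since the $u^i$ are $W$-invariant polynomials, $g^{ij}$ is a polynomial in $u$, homogeneous of degree $d_i+d_j-2$, non-degenerate exactly off the discriminant $\Sigma=\{\det(g^{ij})=0\}$, and flat on $(V/W)\setminus\Sigma$, because there the quotient map $V\setminus(\bigcup H)\to(V/W)\setminus\Sigma$ is an unramified (in the complex case, biholomorphic) covering, so the flat metric $(\,,\,)$ descends.

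Next I would set $e:=\partial/\partial u^n$ and define the candidate Saito metric $\eta^{ij}:=\partial_{u^n}g^{ij}=(\cL_e g)^{ij}$, a polynomial of degree $d_i+d_j-2-d_n$. Using the Coxeter duality $d_i+d_{n+1-i}=h+2$ one sees that $\det(\eta^{ij})$ is homogeneous of degree $0$, hence constant; the main external input, Saito's theorem, is that this constant is nonzero and that the metric $\eta$ so obtained is \emph{flat}. Granting this, there are Saito flat coordinates $t^1,\dots,t^n$ (a new system of basic invariants, $\deg t^i=d_i$) in which $\eta^{ij}$ is constant; since the $t^i$ with $i<n$ cannot involve $u^n$, one gets $e=\partial/\partial t^n$, and the Euler field reads $E=\sum_i\frac{d_i}{d_n}t^i\frac{\partial}{\partial t^i}$, with $[e,E]=e$ and $\cL_E\eta=(2-d)\eta$, $d=1-\tfrac{2}{h}$.

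Then I would verify that $(g,\eta)$ is a genuine flat pencil: because $g^{ij}$ is affine in $u^n$, schematically $g^{ij}=u^n\,\eta^{ij}+(\text{lower order in }u^n)$, the pencil $g^{ij}-\lambda\,\eta^{ij}$ is flat for every $\lambda$ and its contravariant Christoffel symbols are affine in $\lambda$; one also records quasi-homogeneity with respect to $E$, exactness ($\cL_e g=\eta$), and the Egorov/potentiality property, inherited from the symmetry of the rotation coefficients of both $g$ and $\eta$. The final step is Dubrovin's dictionary: an exact, quasi-homogeneous, Egorov flat pencil of metrics determines a Dubrovin--Frobenius manifold whose invariant metric is $\eta$, whose unit is $e$, whose Euler field is $E$, and whose product $\circ$ is recovered from the Christoffel symbols of $g$ written in the $\eta$-flat frame; applied to $(g,\eta)$ this produces precisely the asserted structure. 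I expect the real obstacle to be Saito's flatness theorem for $\eta$; the rest is essentially bookkeeping, and polynomiality of the prepotential in the $t^i$ (Dubrovin's conjecture, Hertling's theorem) is a separate issue that is not needed for the existence statement.
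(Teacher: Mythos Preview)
The paper does not give its own proof of this theorem: it is stated as a cited result of Dubrovin \cite{DCoxeter}, and is only illustrated in the $B_2$ example (Section~3.2), where the paper computes the Euclidean cometric in invariant coordinates, takes $\eta=\cL_{\partial/\partial u_2}g$, checks exactness/homogeneity/Egorov, and solves for the prepotential. Your outline is exactly this Dubrovin--Saito flat-pencil argument, so it matches the paper's (sketched) approach.

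One small sharpening: you attribute to Saito both non-degeneracy of $\eta$ and its flatness as a single external input. In Dubrovin's presentation (and in the paper's later Sections~5--6, which rerun the same machinery for a different $g$), the logic is slightly different: from degree counting one shows $g^{ij}(u)$ and its contravariant Christoffel symbols are at most linear in $u^n$; then Dubrovin's Lemma (Proposition~\ref{pro:DubCox} in the paper, Lemma~1.2 in \cite{DCoxeter}) gives that $(g,\eta)$ is a flat pencil, whence flatness of $\eta$ is automatic. The genuinely nontrivial external input is only that $\det(\eta^{ij})\neq 0$, which is the Saito--Yano--Sekiguchi result. Your sketch of the Egorov property via ``symmetry of rotation coefficients'' is also more heuristic than what the paper does: in the $B_2$ example the function $\tau$ is exhibited directly ($\tau=u_1$), and in the general $B_n$ case (Section~6) the paper again writes down $\tau$ explicitly. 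None of this affects the correctness of your plan.
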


Dubrovin-Saito construction relies on the existence of a flat pencil of metrics associated with any Coxeter group. Let us illustrate this construction in a simple example.

\subsection{Dubrovin-Saito construction for $B_2$}
In this case, the basic invariants  have degree $d_1=2$ and $d_2=4$. Up to a constant factor they
 have the form
\[u_1=\f{1}{8}(p_1^2+p_2^2),\qquad u_2=p_1^2p_2^2+cu_1^2\]
where $c$ is an arbitrary constant. The Euclidean cometric has the standard constant form in the coordinates $(p_1 , p_2)$. Rewriting the Euclidean cometric in the coordinates $(u_1,u_2)$ we get
\[g=\begin{pmatrix} \f{1}{2}u_1 & u_2\\ u_2 & -2c(c+16)u_1^3+4(c+8)u_1u_2
\end{pmatrix}.\]
According to Saito's general result there is a unique choice of $c$ such that the cometric
 $\eta=\mathcal{L}_{\frac{\partial}{\partial u_2}}g$ is non-degenerate and constant. Indeed the cometric
\[\eta=\begin{pmatrix} 0 & 1\\ 1 & 4(c+8)u_1
\end{pmatrix}\]
is constant only if $c=-8$.

According to Dubrovin's general result for such a choice of $c$ the pencil 
 $g-\lambda\eta$ is a flat pencil of contravariant metric satisfying the following additional properties
\begin{itemize}
\item \emph{Exactness}: there exists a vector field $e$ such that
\[
\mathcal{L}_e g=\eta,\qquad \mathcal{L}_e \eta=0.
\]
\item \emph{Homogeneity}: 
\[
\mathcal{L}_E g=(d-1)g,
\]
where $E^i:=g^{il}\eta_{lj}e^j$.
\item \emph{Egorov property}: locally there exists a function $\tau$ such that
\[
e^i=\eta^{is}\d_s\tau,\qquad E^i=g^{is}\d_s\tau.
\]
\end{itemize}
Indeed it is immediate to check that $e^i=\delta^i_2$, $E^i=\f{d_i}{4}u_i$
 and $\tau=u_1$. The corresponding solution
 of WDVV equation is obtained solving the system \cite{DLectures}
\[\f{d_i+d_j-2}{h}\eta^{il}\eta^{jk}\d_l\d_m F=g^{ij}.\]
Up to inessential linear terms the solution is
\[F=\f{1}{2}u_1u_2^2+\f{64}{15}u_1^5.\]

\subsection{Almost dual structure and Veselov's $\vee$-system}
In the case of Coxeter groups the almost dual structure has a special form, whose structure is independent
 of the choice of the group. It is defined by the data
$$\left(\nabla^*,\quad*=\f{1}{N}\sum_{H\in \mathcal{H}}\frac{d\alpha_H}{\alpha_H}\otimes\pi_H,\quad E=\sum p_k\f{\partial}{\partial p_k}\right)$$
where 
\begin{itemize}
\item $\nabla^*$ is the Levi-Civita connection of the Euclidean metric,
\item $\mathcal{H}$ is  the collection of the  reflecting hyperplanes  $H$, 
\item $\alpha_H$ is a linear form defining the reflecting hyperplane $H$, 
\item $\pi_H$ is the orthogonal projection onto the orthogonal complement of $H$, 
\item $N$ is a normalization factor.
\end{itemize} 
Products of this form appear in the work of Veselov on $\vee$-systems \cite{Ve} (see \cite{ALjmp,FV} for an interpretation of Veselov's conditions in terms
 of flatness of a Dunkl-Kohno type connection).

\subsection{Flat structures associated with complex reflection groups}
Dubrovin-Saito flat structure and Veselov's dual structure can be generalized to
 complex reflection groups.  
\begin{theorem}\cite{KMS2}\label{ALcomplexthm1}
The orbit space of a well-generated complex reflection group is equipped with a flat $F$-structure $(\nabla,\circ,e,E)$ with {\rm linear} Euler vector field where
\begin{enumerate}
\item The flat coordinates for $\nabla$ are basic invariants $(u_1,\dots,u_n)$ of the group (generalized Saito coordinates).
\item In the Saito flat coordinates 
$$e=\f{\d}{\d u_n},\,\,E=\sum_{i=1}^n\left(\f{d_i}{d_n}\right)u_i\f{\d}{\d u_i}.$$
\end{enumerate}
\end{theorem}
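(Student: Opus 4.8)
The plan is to produce the structure on the ``dual'' side first and then transport it to the natural side by the almost-duality of Theorem~\ref{thalmostduality}, following the scheme of \cite{ALcomplex} (see also \cite{KMSh}). Let $V=\compl^n$ be the reflection representation of the well-generated complex reflection group $W$, let $\mathcal{H}$ be its set of reflecting hyperplanes with chosen defining linear forms $\alpha_H$, and put $V^{\mathrm{reg}}=V\setminus\bigcup_{H\in\mathcal{H}}H$. On $V^{\mathrm{reg}}$ I would introduce the Dunkl--Kohno connection
\[
\nabla^{*}=\nabla^{(0)}-\sum_{H\in\mathcal{H}}\tau_H\,\frac{d\alpha_H}{\alpha_H}\otimes\pi_H,
\]
where $\nabla^{(0)}$ is the standard flat connection on $V$, $\pi_H$ the $W$-equivariant rank-one projector onto the line moved by the reflection fixing $H$, and $\tau_H$ a $W$-invariant weight (the standard choice being the order of that reflection); together with the product $X*Y=\frac{1}{N}\sum_{H}\frac{d\alpha_H(X)}{\alpha_H}\,\pi_H(Y)$ and the Euler field $E=\sum_k p_k\,\partial_{p_k}$, which is the $*$-unit up to normalization. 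The first task is to verify that $(\nabla^{*},*,E)$ is an almost flat $F$-structure, i.e. that $\nabla^{*}+\lambda*$ is flat and torsionless for every $\lambda$.

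\textbf{Flatness of the dual structure and descent.} Torsionlessness of $\nabla^{*}$, commutativity of $*$, and symmetry of $\nabla^{*}_l(*)$ in its lower indices are immediate from the symmetry of the defining sums; by the identity principle applied to \eqref{defortor}--\eqref{deforcurv}, what remains is (a) flatness of $\nabla^{*}$ and (b) associativity of $*$ together with flatness of $\nabla^{(0)}+\lambda*$. All of these reduce to the same finite family of quadratic identities among the projectors $\pi_H$, one per codimension-two intersection stratum of $\mathcal{H}$ — the infinitesimal-braid / Dunkl--Kohno relations, equivalently Veselov's $\vee$-system conditions. These hold because $\mathcal{H}$ is a reflection arrangement and the weights are $W$-invariant (Looijenga, Dunkl--Opdam, Kohno; cf. \cite{DO,Ko,Looijenga,ALjmp,FV,Ve}). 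Since every ingredient is built from $W$-invariant data, the triple $(\nabla^{*},*,E)$ descends to the orbit space $M^{\mathrm{reg}}=V^{\mathrm{reg}}/W$, a Zariski-open subset of the affine space $M=V/W$ (Shephard--Todd--Chevalley).

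\textbf{Transport to the natural side.} On $M^{\mathrm{reg}}$ one runs the reconstruction of the natural structure from the dual one, as in \cite{ALcomplex} (the inverse of the passage in Theorem~\ref{thalmostduality}): from $(\nabla^{*},*,E)$ one builds the multiplication operator $E\circ$, hence the natural product $\circ$ via $X\circ Y=(E\circ)(X*Y)$, the natural unit $e=(E\circ)^{-1}E$, and the natural connection $\nabla$, characterized among torsionless connections by $(d_{\nabla}-d_{\nabla^{*}})(X\circ)=0$. One then has to check that $\nabla$ is flat, that $\nabla e=0$, that $\nabla+\lambda\circ$ is flat and torsionless for all $\lambda$, and that $\mathrm{Lie}_E\circ=\circ$ and $[e,E]=e$, so that $(\nabla,\circ,e,E)$ is a flat $F$-structure with linear Euler field (and the pair $(\nabla,\circ,e)$, $(\nabla^{*},*,E)$ is a bi-flat $F$-manifold). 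Finally, flatness of $\nabla$ furnishes affine coordinates on $M^{\mathrm{reg}}$; since $E$ descends to a rescaling of the dilation field it acts on them with rational weights, and after diagonalization and absorption of constant shifts the affine coordinates become eigenfunctions of degrees $d_1,\dots,d_n$; once one shows they extend holomorphically across the discriminant they are polynomial, i.e. a distinguished system of basic invariants $(u_1,\dots,u_n)$ (the generalized Saito coordinates), in which $E=\sum_i(d_i/d_n)\,u_i\,\partial_{u_i}$ and the flat weight-one unit is necessarily $e=\partial/\partial u_n$ (as $d_n$ is the largest degree).

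\textbf{Main obstacle.} The crux is the penultimate step: that the natural connection arising from the Dunkl--Kohno connection is flat \emph{and} admits \emph{polynomial} flat coordinates — the existence of generalized Saito coordinates. This is the complex-reflection-group counterpart of Saito's theorem, and it is precisely where well-generatedness enters: the duality $d_i+d^{*}_{n+1-i}=d_n$ between degrees and codegrees is what aligns the homogeneity weights of $E$ with a polynomial flat chart and secures non-degeneracy of the pairing governing the change to Saito coordinates. The remaining inputs — flatness of $\nabla^{*}$ and of $\nabla^{(0)}+\lambda*$, which together encode the $\vee$-system property of reflection arrangements — are nontrivial but essentially available in the literature on rational Dunkl operators.
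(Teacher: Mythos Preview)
The paper does not supply a proof of this theorem: it is stated with the attribution \cite{KMS2} and used as background, so there is no ``paper's own proof'' to compare against. The original argument of Kato--Mano--Sekiguchi builds the flat structure (a Saito structure without metric) directly, via isomonodromic methods; the paper also records that an alternative proof, starting from a dual Dunkl--Kohno structure, was given in \cite{KMSh} (see the paragraph following Remark~\ref{rwkms}). Your outline follows this second route, which is a legitimate strategy.

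That said, as written the proposal has two genuine gaps. First, the ``transport to the natural side'' step is circular: you propose to build $E\circ$ and then set $X\circ Y=(E\circ)(X*Y)$ and $e=(E\circ)^{-1}E$, but $E\circ$ is exactly the unknown. In the actual procedure (compare the $B_2$ worked example in the paper, or \cite{ALcomplex,KMSh}) one does not purely transport: one posits a candidate unit $e$ --- in practice $e=\partial/\partial u_n$ in some choice of basic invariants --- defines $\circ$ from $*$ and $e$ by $X\circ Y=(e*)^{-1}(X*Y)$, defines $\nabla$ by declaring a suitable set of invariants flat, and then \emph{verifies} the compatibility conditions; the dual data constrain but do not by themselves determine the natural side. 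Second, you correctly isolate the ``main obstacle'' --- flatness of $\nabla$ together with polynomiality of its flat coordinates, i.e.\ the existence of generalized Saito invariants --- but you do not resolve it; you only indicate that well-generatedness should enter through the degree/codegree duality $d_i+d^{*}_{n+1-i}=d_n$. This is precisely the substantive content of the theorem and is where \cite{KMS2} and \cite{KMSh} do the real work. Without that argument what you have is a plan rather than a proof.
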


\begin{rmk}\label{rwkms}
The linearity of $E$ (i.e. the condition $\nabla\nabla E=0$) turns out to be equivalent to the existence of a second compatible flat structure. This was proved in the semisimple case in \cite{ALcomplex} and later in the non-semisimple case (under some regularity assumptions) in \cite{KMSh}.
\end{rmk}

The dual flat structures are described by the following theorem 
\begin{theorem}\label{ALcomplexthm2}
Let $G$ be an irreducible complex reflection group acting on $\mathbb{C}^n$. Then the data
$$\left(\nabla^*
=\nabla^0-\sum_{H\in \mathcal{H}}\frac{d\alpha_H}{\alpha_H}\otimes\tau_H\pi_H,\, *=\sum_{H\in \mathcal{H}}\frac{d\alpha_H}{\alpha_H}\otimes\sigma_H\pi_H,\, E=\sum p_k\f{\partial}{\partial p_k}\right)$$
where 
\begin{itemize}
\item $\mathcal{H}$ is  the collection of the  reflecting hyperplanes  $H$, 
\item $\alpha_H$ is a linear form defining the reflecting hyperplane $H$, 
\item $\pi_H$ is the unitary projection onto the unitary complement of $H$,
\item the collections of weights $\sigma_H$ and $\tau_H$ are $G$-invariant and satisfy
\beq\label{sums} 
\sum_{H\in \mathcal{H}}\sigma_H\pi_H=\sum_{H\in \mathcal{H}}\tau_H\pi_H=Id.
\eeq
\item $\nabla^0$ is the standard flat connection on $\mathbb{C}^n$,
\end{itemize} 
define a flat $F$-structure on the orbit space of the action of $G$ on $\mathbb{C}^n$.  
\end{theorem}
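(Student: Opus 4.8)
The plan is to verify directly that $(M,*,\nabla^*,E)$ satisfies the three axioms of Definition~\ref{defflatFman}, where $M$ denotes the orbit space with the image of $\bigcup_{H\in\mathcal H}H$ removed (there the quotient map is a local biholomorphism and the logarithmic poles of the connection are absent). The first, preliminary, step is to check that all the data descend to $M$: the family $\{d\alpha_H/\alpha_H\}_{H\in\mathcal H}$ is permuted by $G$ (the scalar relating $g^{*}\alpha_H$ to $\alpha_{g^{-1}H}$ disappears under $d\log$), one has $g^{-1}\pi_H g=\pi_{g^{-1}H}$, and the weights $\sigma_H,\tau_H$ are $G$-invariant by hypothesis; together with the $G$-invariance of $\nabla^0$ and of $E=\sum_k p_k\partial_{p_k}$ this makes $\nabla^*$, $*$ and $E$ invariant, hence well defined on $M$.

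Axioms~(2) and~(3), together with the torsion-free half of~(1), are then elementary. Write $\pi_H$ as the rank-one orthogonal projection $Y\mapsto \tfrac{\alpha_H(Y)}{(\alpha_H,\alpha_H)}\,a_H$, where $a_H$ is the normal to $H$ for the $G$-invariant Hermitian form; since $\alpha_H$ is linear, its differential is the constant one-form $X\mapsto\alpha_H(X)$, so the $(1,2)$-tensor $\tfrac{d\alpha_H}{\alpha_H}\otimes\pi_H$ sends $(X,Y)\mapsto \tfrac{\alpha_H(X)\alpha_H(Y)}{\alpha_H\,(\alpha_H,\alpha_H)}\,a_H$, which is symmetric in $X,Y$. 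Hence $*$ is commutative and $\nabla^*_{(\lambda)}:=\nabla^*+\lambda*=\nabla^0-\sum_H\tfrac{d\alpha_H}{\alpha_H}\otimes(\tau_H-\lambda\sigma_H)\pi_H$ is torsion-free for every $\lambda$. Euler's identity gives $d\alpha_H(E)=\alpha_H$, so on $M$ one has $E*X=\sum_H\sigma_H\pi_H(X)=X$ (axiom~(2)); likewise $\nabla^0_XE=X$ and $\tfrac{d\alpha_H(X)}{\alpha_H}\,\pi_H(E)=\pi_H(X)$, whence $\nabla^*_XE=X-\sum_H\tau_H\pi_H(X)=0$ (axiom~(3)), both using the hypothesis $\sum_H\sigma_H\pi_H=\sum_H\tau_H\pi_H=\mathrm{Id}$.

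The substantial point is flatness of $\nabla^*_{(\lambda)}$ for all $\lambda$. Writing $\nabla^*_{(\lambda)}=\nabla^0-\Omega_\lambda$ with $\Omega_\lambda=\sum_H\tfrac{d\alpha_H}{\alpha_H}\,A_H$, $A_H:=(\tau_H-\lambda\sigma_H)\pi_H$ a constant $\mathrm{End}(TM)$-valued one-form, and noting each $d\alpha_H/\alpha_H$ is closed, the curvature reduces to $\Omega_\lambda\wedge\Omega_\lambda=\sum_{\{H,H'\}}\tfrac{d\alpha_H}{\alpha_H}\wedge\tfrac{d\alpha_{H'}}{\alpha_{H'}}\otimes[A_H,A_{H'}]$. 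I would sort the pairs $\{H,H'\}$ by the codimension-two flat $L=H\cap H'$ they cut out, and use Arnold's relations in the Orlik--Solomon algebra: for the hyperplanes through a fixed $L$ the two-forms $\tfrac{d\alpha_{H_i}}{\alpha_{H_i}}\wedge\tfrac{d\alpha_{H_j}}{\alpha_{H_j}}$ span a space of dimension one less than their number, and on an independent basis the coefficients become, up to a scalar, the commutators $\bigl[\sum_{H\supseteq L}(\tau_H-\lambda\sigma_H)\pi_H,\ \pi_{H'}\bigr]$, $H'\supseteq L$. So it suffices to prove that $\sum_{H\supseteq L}c_H\pi_H$ commutes with every $\pi_{H'}$, $H'\supseteq L$, for any $G$-invariant family of weights $c_H$ (which $\tau_H-\lambda\sigma_H$ is). By Steinberg's theorem the pointwise stabilizer $G_L$ is a rank-two complex reflection group acting on $L^{\perp}\cong\mathbb{C}^2$ and permuting the hyperplanes through $L$; $G$-invariance of the $c_H$ makes $\sum_{H\supseteq L}c_H\pi_H$ — which annihilates $L$ and preserves $L^{\perp}$ — commute with $G_L$ on $L^{\perp}$. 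If $G_L$ acts irreducibly on $L^{\perp}$, Schur's lemma forces this operator to be a scalar there, hence to commute with all $\pi_{H'}$; if it acts reducibly — which, by the classification of rank-two complex reflection groups, happens precisely when exactly two hyperplanes pass through $L$, and then their normals are orthogonal — the two projections commute outright. In either case the $L$-contribution to $\Omega_\lambda\wedge\Omega_\lambda$ vanishes, so $\nabla^*_{(\lambda)}$ is flat for all $\lambda$; combined with the previous paragraph this establishes Definition~\ref{defflatFman}, and the polynomial-identity argument recalled in Section~\ref{sec:biflat} then also yields commutativity and associativity of $*$.

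The main obstacle is exactly this curvature computation, specifically the passage — via Arnold's relations — from the global vanishing of $\Omega_\lambda\wedge\Omega_\lambda$ to a local condition on each rank-two parabolic $G_L$, followed by the dichotomy ``Schur-scalar versus orthogonal mirrors''; this is where the $G$-invariance of the weights and the group-theoretic input (Steinberg's theorem, the list of rank-two complex reflection groups) are indispensable. The remaining points — the descent to $M$ and the bookkeeping with the identities $d\alpha_H=\alpha_H$, $\alpha_H(E)=\alpha_H$ and $\pi_H(E)\parallel a_H$ — are routine once the conventions for $\pi_H$ and $E$ are fixed.
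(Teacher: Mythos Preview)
The paper does not include a proof of this theorem; it is stated as background, and the label points to \cite{ALcomplex}, where the result originates (the underlying flatness argument goes back further, to \cite{Ko,Looijenga,CHL}). So there is no proof in the paper to compare against.

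Your argument is correct and is essentially the one used in those references. The verification of the unit and flat-unit axioms via the normalisations $\sum_H\sigma_H\pi_H=\sum_H\tau_H\pi_H=\mathrm{Id}$ and Euler's identity $d\alpha_H(E)=\alpha_H$ is the standard bookkeeping. The substantive step --- flatness of $\nabla^*_{(\lambda)}$ for all $\lambda$ --- is exactly the Kohno-type commutator condition, and your localisation to codimension-two flats via the Orlik--Solomon relations, followed by Steinberg's theorem and the Schur/orthogonal-mirrors dichotomy on each rank-two parabolic $G_L$, is the canonical route.

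Two minor remarks. First, when you pass from the Kohno condition $[\sum_{H\supseteq L}A_H,A_{H'}]=0$ to $[\sum_{H\supseteq L}c_H\pi_H,\pi_{H'}]=0$ you are tacitly dividing by $c_{H'}=\tau_{H'}-\lambda\sigma_{H'}$; when this vanishes the original commutator is already zero, so nothing is lost, but it is worth a word. Second, in the reducible case your claim that ``exactly two hyperplanes pass through $L$ and their normals are orthogonal'' is correct, but the justification is simply that two reflections with non-orthogonal mirrors in $\mathbb{C}^2$ have no common eigenline and hence generate an irreducible group --- invoking the full classification of rank-two complex reflection groups is more than is needed.
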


In the case of well generated complex reflection groups of rank 2,3,4 it was proved in 
 \cite{ALcomplex,ALPreviato} that, for a suitable choice of the weights $\sigma_H$  and $\tau_H$ and of the basic invariants, there exists a bi-flat $F$-manifold structure whose natural
 structure has the form described in Theorem \ref{ALcomplexthm1} and whose dual
 structure has the form described in Theorem \ref{ALcomplexthm2}. In all the examples
 choosing $\sigma_H$ and $\tau_H$, proportional to the order of the corresponding pseudo-reflection the natural structure coincides with the flat structures obtained in \cite{KMS2}. In general the choice of the weights $\tau_H$ is not unique as we are going to illustrate in the case of $B_2$.
  
\subsection{A simple example: $B_2$}
\subsubsection{Step 1. The dual product $*$}
We start from the product
$$*=\sum_{H\in \mathcal{H}}\frac{d\alpha_H}{\alpha_H}\otimes\sigma_H\pi_H$$
where
\begin{eqnarray*}
&&\alpha_1=p_1,\qquad \alpha_2=p_2,\qquad \alpha_3=p_1-p_2\qquad\alpha_4=p_1+p_2
\end{eqnarray*}
Let us call Orbit 1 the orbit containing the straight lines $\alpha_1=0$ and $\alpha_2=0$  and Orbit 2 the orbit containing the straight lines $\alpha_3=0$ and $\alpha_4=0$. 
According to the general rule the weights must be the same for lines in the same orbit:  
\begin{eqnarray*}
&&\sigma_{1}=\sigma_{2}=\f{x}{x+y},\qquad \sigma_{3}=\sigma_{4}=\f{y}{x+y}
\end{eqnarray*}
We get
\begin{eqnarray*}
&&c^{*1}_{11}=\f{(x+y)p_1^2-xp_2^2}{(x+y)p_1(p_1^2-p_2^2)},\quad
c^{*2}_{11}=\f{-yp_2}{(x+y)(p_1^2-p_2^2)}=c^{*1}_{21}=c^{*1}_{21}\\
&&c^{*2}_{12}=\f{yp_1}{(x+y)(p_1^2-p_2^2)}=c^{*2}_{21}=c^{*1}_{22},\quad
c^{*2}_{22}=\f{xp_1^2-(x+y)p_2^2}{(x+y)p_2(p_1^2-p_2^2)}.
\end{eqnarray*}

\subsubsection{Step 2. The connection $\nabla$} 
We assume that the basic invariants are flat coordinates of $\nabla$. For $B_2$ up to a constant factor they depend on a single parameter $c$:
$$u_1=p_1^2+p_2^2,\qquad u_2=p_1^4+p_2^4+cu_1^2.$$ 
Writing the connection $\nabla$ in the coordinates $p_1,p_2$ we get
\begin{eqnarray*}
&&\Gamma^1_{11}=-\f{(4c-1)p_1^2+p_2^2}{p_1(p_1^2-p_2^2)},\quad\Gamma^2_{11}=\f{4cp_1^2}{p_2(p_1^2-p_2^2)},
\quad\Gamma^1_{12}=-\f{2(2c+1)p_2}{(p_1^2-p_2^2)}=\Gamma^1_{21}\\
&&\Gamma^2_{12}=\f{2(2c+1)p_1}{(p_1^2-p_2^2)}=\Gamma^2_{21},\quad
\Gamma^1_{22}=-\f{4cp_2^2}{p_1(p_1^2-p_2^2)},\quad
\Gamma^2_{22}=\f{(4c-1)p_2^2+p_1^2}{p_2(p_1^2-p_2^2)}.
\end{eqnarray*}

\subsubsection{Step 3. The unit vector field $e$} 
We assume that in the basic invariants $e=\frac{\partial}{\partial u_2}$.
\subsubsection{Step 4. The product $\circ$}
From $*$ and $e$ we can define $\circ$ in the usual way as
$$X\circ Y=(e*)^{-1}X*Y,\quad \forall X,Y.$$
We get
\begin{eqnarray*}
&&c^1_{11}=-\f{2xp_1^3}{(x+y)}+2p_1p_2^2,\quad
c^2_{11}=\f{2yp_1^2p_2}{x+y}=c^1_{12}=c^1_{21}\\
&&c^2_{12}=\f{2yp_2^2p_1}{x+y}=c^2_{21}=c^1_{22},\quad
c^2_{22}=2p_2p_1^2-\f{2xp_2^3}{x+y}.
\end{eqnarray*}
\subsubsection{Step 5. The constraint on the weights}
Imposing the compatibility between $\nabla$ and $\circ$:
$$\nabla_k c^i_{jl}=\nabla_jc^i_{lk}$$
we get the constraint $x=y$, that is $\sigma_1=\sigma_2=\sigma_3=\sigma_4=\f{1}{2}$. 
\subsubsection
{Step 6. The dual connection $\nabla^*$} 
Imposing the condition $\nabla^* E=0$ and the condition \eqref{eq5} we obtain
\begin{eqnarray*}
&&b^1_{11}=-\f{(4c+1)p_2^2-p_1^2}{p_1(p_1^2-p_2^2)},\quad b^2_{11}=-\f{4cp_2}{p_1^2-p_2^2},
\quad b^1_{12}=-\f{-4cp_2}{p_1^2-p_2^2}=b^1_{21}\\
&&b^2_{12}=\f{4cp_1}{p_1^2-p_2^2}=b^2_{21},\quad
b^1_{22}=-\f{4cp_1}{p_1^2-p_2^2},\quad
b^2_{22}=-\f{(4c+1)p_2^2-p_2^2}{p_2(p_1^2-p_2^2)}.
\end{eqnarray*}
In particular for $c=-\f{1}{8}$ we have $b^i_{jk}=-c^{*i}_{jk}$. 
\subsubsection{Step 7. The vector potential}
The above data and the Euler vector field $E=\sum _n p_n\f{\partial}{\partial p_n}$ define a a bi-flat $F$-manifold structure $(\nabla,\circ,e,\nabla^*,*,E)$ for {\em  any choice} of  $c$. 
 Solving the system
 $$c^i_{jk}=\d_j\d_k F^i_{B_2},$$
 we get the vector potential
\begin{equation}\label{bfB2}
F_{B_2}^1= u_1u_2-\f{1}{12}u_1^3(8c+1),\qquad F_{B_2}^2= 
-\f{c}{12}(4c+1)u_1^4+\f{1}{2}u_2^2.
\end{equation}
For $c=-\f{1}{8}$ the vector potential comes from a Dubrovin-Frobenius prepotential.

Summarizing, assuming $e=\frac{\partial}{\partial u_2}$ the choice of the
 weights $\sigma_H$ is unique (they coincide up to a normalization factor with the order
 of the corresponding reflection) while the choice of the weights $\tau_H$ depends on
 a parameter $c$. In \cite{ALcomplex,ALPreviato} it was conjectured that this additional freedom appears every time that  all the mirrors do not belong to the same orbit.
 
\section{A modified construction}
\subsection{The case of $B_2$}
In flat coordinates the components of the unit vector field should be constant. Following
 Dubrovin-Saito and Kato-Mano-Sekiguchi we have assumed that the flat coordinates
 are basic invariants and that $e=\frac{\partial}{\partial u_2}$, where $u_2$ is the highest degree invariant polynomial. The last assumption is very natural since (up to a constant factor) the vector field $\frac{\partial}{\partial u_2}$ is not affected by a change in the choice of the basic invariants. In this Section, restricting ourself to the case of $B_2$, we will study what happens if we remove this hypothesis.
\newline
\newline
\noindent
{\bf Modified Step 3: we assume that in the basic invariants $e$ is constant.}
\newline
\newline
Defining $\circ$ in the usual way and imposing the condition
$$\nabla_k c^i_{jl}=\nabla_jc^i_{lk},$$
after some computations (performed with the help of Maple) we get the following solutions
\begin{enumerate} 
\item $y=x$, $e^1=0$,
\item $c=0$, $x=0$ and $e^2=0$,
\item $c=-\f{1}{4}$, $y=0$ and $e^2=0$.
\end{enumerate}
The first solution corresponds to the one-parameter family of bi-flat $F$-manifold structures related to the vector potentials \eqref{bfB2}. Following the same steps outlined above, the second and the third solution lead to the following solutions of WDVV equations 
\begin{eqnarray*}
F&=& \f{1}{2}u_1^2u_2\pm\f{1}{2}u_2^2\left(\ln{u_2}-\f{3}{2}\right).\\
\end{eqnarray*}
These are the prepotentials of the Dubrovin-Frobenius manifolds associated with
 defocusing/focusing NLS equation. Indeed, let us consider the chain of commuting
 flows of the principal hierarchy (see for instance \cite{DLectures}), obtained starting from
\[u^i_{t_0}=u^i_x,\qquad i=1,2.\]
These flows have the form
\[u^i_{t_{(\alpha)}}=c^i_{jk}X^j_{(\alpha)}u^k_x=\eta^{il}\d_l\d_j\d_k FX^j_{(\alpha)}u^k_x,\qquad i=1,2,\,\alpha=0,1,2,\dots\]
where $X^j_{(0)}=e^j=\delta^j_1$ and the vector fields $X_{(\alpha)}$ are obtained
 solving the recursion relations
\[\d_jX^i_{(\alpha)}=c^i_{jk}X^k_{(\alpha-1)}.\]
For instance (independently of the choice of the sign in $F$) we obtain
\[X^1_{(1)}=u^1,\qquad X^2_{(1)}=u^2.\]
Taking into account that the non-zero  structure constants are
\[c^1_{22}=\pm\f{1}{u^2},\qquad c^1_{11}=c^2_{12}=c^2_{21}=1,\]
the corresponding evolutionary PDEs  are given by
\begin{eqnarray*}
u^1_{t_{(1)}}&=&c^1_{jk}X^j_{(1)}u^k_x=c^1_{11}X^1_{(1)}u^1_x+
 c^1_{22}X^2_{(1)}u^2_x=u^1u^1_x\pm u^2_x,\\
u^2_{t_{(1)}}&=&c^2_{jk}X^j_{(1)}u^k_x=c^2_{12}X^1_{(1)}u^2_x+
 c^2_{21}X^2_{(1)}u^1_x=(u^1u^2)_x.
\end{eqnarray*}
They coincide with the dispersionless limit of the evolutionary system of PDEs associated with defocusing/focusing NLS equation (compare with Example 2.12 in \cite{Dams} where $u^1=-v$ and $u^2=u$).  

It is worth to mention that the genus expansion of the first Dubrovin-Frobenius manifold structure is related to  higher genera generalization of the Catalan numbers \cite{CLPS}. 

\bigskip

\begin{center}
\begin{picture}(140,140) 
\put(70,70){\textcolor{red}{\line(1,0){45}}} 
\put(70,70){\textcolor{red}{\line(0,1){45}}}
\put(70,70){\textcolor{red}{\line(0,-1){45}}}
\put(70,70){\textcolor{red}{\line(-1,0){45}}}
\put(-120,1){\scriptsize Reflecting hyperplanes for $B_2$: lines of the same colour corresponds to the same orbit type.}
\put(70,70){\textcolor{cyan}{\line(1,1){50}}}
\put(70,70){\textcolor{cyan}{\line(-1,-1){50}}}
\put(70,70){\textcolor{cyan}{\line(-1,1){50}}}
\put(70,70){\textcolor{cyan}{\line(1,-1){50}}}
\end{picture}
\end{center}

\subsection{The cases $B_3$ and $B_4$}
The previous computations becomes very cumbersome for $n>2$ and it seems very
 difficult to carry out all the steps without some additional assumptions.

Motivated by the previous example we investigate bi-flat $F$-manifold structures associated with following two
 choices of the weights $\{\sigma_H\}_{H\in \mathcal{H}}$:
\begin{enumerate}
\item $\sigma_H=0$ if $H$ is one of the (hyper)planes $p_i=0$ (otherwise $\sigma_H=1$). All these
 (hyper)planes belong to the same orbit (Orbit I).
\item 	$\sigma_H=0$ if $H$ is one of the (hyper)planes of Orbit II (otherwise $\sigma_H=1$). 
\end{enumerate}
It turns out that the first choice leads to a Dubrovin-Frobenius manifold with prepotentials
\begin{eqnarray*}
F_{B_3}&=&\f{1}{6}u_2^3+u_1u_2u_3+\f{1}{12}u_1^3u_3-\f{3}{2}u_3^2+u_3^2\ln{u_3},\\
F_{B_4}&=&\f{1}{108}u_1^4u_4+\f{1}{6}u_1^2u_2u_4-\f{1}{72}u_2^4+u_1u_3u_4+\f{1}{2}u_2^2u_4+\f{1}{2}u_2u_3^2-\f{9}{4}u_4^2+\f{3}{2}u_4^2\ln{u_4},
\end{eqnarray*}
while the second choice does not produce any bi-flat structure. 

 In order to prove the existence of a Dubrovin-Frobenius manifold structure for any $n$, associated with the first choice, we will use a different strategy. The key observation is that 
 in all the above examples ($n=2,3,4$) the intersection form has always the same
 expression
\[g_{B_2}=
\begin{bmatrix}
0&\frac{1}{p_1p_2}\\ 
\frac{1}{p_1p_2}&0
\end{bmatrix},\qquad 
g_{B_3}=
\begin{bmatrix}
0&\frac{1}{p_1p_2}&\frac{1}{p_1p_3}\\ 
\frac{1}{p_1p_2}&0&\frac{1}{p_2p_3}\\ 
\frac{1}{p_1p_3}&\frac{1}{p_2p_3}&0
\end{bmatrix},\qquad g_{B_4}=
\begin{bmatrix}
0&\frac{1}{p_2p_1}&\frac{1}{p_1p_3}&\frac{1}{p_1p_4}\\ 
\frac{1}{p_1p_2}&0&\frac{1}{p_2p_3}&\frac{1}{p_2p_4}\\ 
\frac{1}{p_1p_3}&\frac{1}{p_2p_3}&0&\frac{1}{p_3p_4}\\
\frac{1}{p_1p_4}&\frac{1}{p_2p_4}&\frac {1}{p_3p_4}&0
\end{bmatrix} 
\] 
In the next Section, starting from the intersection form defined by

\begin{equation}
g^{ij}(p)=\f{(1-\delta^{ij})}{p_ip_j},\qquad i,j=1,\dots,n\label{eq:c1}
\end{equation}
\noindent
we will prove the existence of a flat pencil of metrics which yields a Dubrovin-Frobenius structure
for any $n$. Our approach relies on a suitable generalization of Dubrovin-Saito
 construction. The proof of the existence of the Saito metric closely follows the ideas of the paper by Saito, Yano and Sekiguchi \cite{SYS},
 while the reconstruction of the Dubrovin-Frobenius manifold structure requires to overcome some additional technical difficulties with respect 
 to the standard procedure of \cite{du97} due to the non regularity of the associated flat pencil.
\section{A flat pencil of metrics associated with $B_n$}
The goal of this Section and of the next Section is to construct a Dubrovin-Forbenius structure on the orbit space of $B_n$, generalizing the ones previously computed for $B_2, B_3$ and $B_4$, that lead to prepotentials containing logarithmic terms. The starting point is the intersection form \eqref{eq:c1}; taking the Lie derivative of $g^{ij}$ with respect to the {\em second highest} degree invariant polynomial $u_{n-1}$ we build a new bilinear form $\eta$ and we prove that the pair $(g, \eta)$ forms a flat pencil of metrics, which is also exact, homogenous and satisfies the Egorov property. By Dubrovin's general correspondence between such pencils and Dubrovin-Frobenius structures, this will allow us to equip the orbit space $\mathbb{C}^n/B_n$ with the latter structure. 

First we will prove a few preliminary results concerning $g$, which, in this set up, plays the role played by the Euclidean cometric in the standard one.
To this end, we will start observing that, as in the Euclidean case, $g$ is $B_n$ invariant and flat.
 
\subsection{Invariance of $g$ with respect to the action of $B_n$} 
First we observe that
\begin{lemma}
The metric defined by $g_{ij}=\left( \frac{1}{n-1}-\delta_{ij}\right)p_ip_j$ and the cometric defined by $g^{ij}=\frac{(1-\delta^{ij})}{p_ip_j}$ are inverse to each other. 
\end{lemma}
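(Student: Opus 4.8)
The plan is to verify directly that the matrix product of $g_{ij}=\left(\frac{1}{n-1}-\delta_{ij}\right)p_ip_j$ with $g^{jk}=\frac{(1-\delta^{jk})}{p_jp_k}$ equals $\delta_i^k$. Since the claim is a purely algebraic identity among $n\times n$ matrices whose entries are monomials in $p_1,\dots,p_n$, no geometry is needed; I would simply compute $\sum_{j=1}^n g_{ij}g^{jk}$ and simplify.

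First I would write the general entry: for fixed $i$ and $k$,
\[
\sum_{j=1}^n g_{ij}g^{jk}=\sum_{j=1}^n \left(\frac{1}{n-1}-\delta_{ij}\right)p_ip_j\cdot\frac{1-\delta^{jk}}{p_jp_k}
=\frac{p_i}{p_k}\sum_{j=1}^n\left(\frac{1}{n-1}-\delta_{ij}\right)(1-\delta_{jk}).
\]
Note the $p_j$ cancels in every term, so the whole sum reduces to an elementary combinatorial count. The plan is then to expand $\left(\frac{1}{n-1}-\delta_{ij}\right)(1-\delta_{jk})=\frac{1}{n-1}-\frac{\delta_{jk}}{n-1}-\delta_{ij}+\delta_{ij}\delta_{jk}$ and sum each of the four pieces over $j$ from $1$ to $n$: the first gives $\frac{n}{n-1}$, the second gives $-\frac{1}{n-1}$, the third gives $-1$, and the fourth gives $\delta_{ik}$ (since $\delta_{ij}\delta_{jk}$ is nonzero for some $j$ exactly when $i=k$). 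Adding, $\frac{n}{n-1}-\frac{1}{n-1}-1+\delta_{ik}=\frac{n-1}{n-1}-1+\delta_{ik}=\delta_{ik}$.

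Therefore $\sum_j g_{ij}g^{jk}=\frac{p_i}{p_k}\,\delta_{ik}=\delta_i^k$ (the prefactor is $1$ whenever $i=k$), which is exactly the statement that the two matrices are mutually inverse. I do not expect any genuine obstacle here: the only point requiring a moment's care is the bookkeeping of the Kronecker deltas — in particular remembering that the $j$-sum of $\delta_{ij}\delta_{jk}$ contributes $\delta_{ik}$ rather than $1$, and that the cancellation of the $p_j$ factors is what makes the identity hold identically in the $p$'s. One should also note implicitly that this computation takes place on the locus where all $p_i\neq 0$, so that $g^{ij}$ is defined; this is the same open set on which the later constructions live.
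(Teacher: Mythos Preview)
Your proof is correct and follows essentially the same approach as the paper: a direct verification that the matrix product $\sum_j g_{ij}g^{jk}$ equals $\delta_{ik}$. The only minor difference is that the paper splits the computation into the diagonal case $i=k$ and the off-diagonal case $i\neq k$ separately, whereas you handle both at once by expanding $(\tfrac{1}{n-1}-\delta_{ij})(1-\delta_{jk})$ into four terms and summing; your version is slightly more streamlined but the content is the same.
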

\begin{proof}
First we consider $g^{ki}g_{ik}$ (sum over $i$, $k$ fixed) and we get:
$$g^{ki}g_{ik}=\sum_{i=1}^n\left(\frac{1}{n-1}-\delta_{ki}\right) (1-\delta^{ik})\frac{p_ip_k}{p_ip_k}=\sum_{i=1}^n\left(\frac{1}{n-1}-\delta_{ki}\right)(1-\delta^{ik})=$$
$$\sum_{i, i\neq k}\left(\frac{1}{n-1}-\delta_{ki}\right)=1.$$

Next we consider $g^{ki}g_{il}$ (sum over $i$, while $k$ and $l$ are fixed, $k\neq l$) and we get:
$$g^{ki}g_{il}=\sum_{i=1}^n\left( \frac{1}{n-1}-\delta_{ki}\right)\frac{p_l}{p_k}(1-\delta^{il})=\sum_{i,i\neq l}\left( \frac{1}{n-1}-\delta_{ki}\right)\frac{p_l}{p_k}=$$
$$=\left(\sum_{i, i\neq l,k}\frac{1}{n-1}\frac{p_l}{p_k}\right)+\left(\frac{1}{n-1}-1 \right)\frac{p_l}{p_k}=\left(\frac{n-2}{n-1}-\frac{n-2}{n-1}\right)\frac{p_l}{p_k}=0.$$
\end{proof}

The next proposition shows that the metric defined by the $g_{ij}(p)$s introduced in the previous lemma is invariant under the action of $B_n$. Of course, from this the invariance of the corresponding cometric follows. 

\begin{proposition}\label{Lemma:inv}
The metric $g:=g_{ij}(p)dp_i\otimes dp_j=\left( \frac{1}{n-1}-\delta_{ij}\right)p_ip_jdp_i\otimes dp_j$ is invariant under the action of $B_n$ on $V=\mathbb{R}^n$. 
\end{proposition}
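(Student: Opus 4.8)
The plan is to verify invariance directly by checking it on a generating set for $B_n$. Recall that $B_n$ is generated by two types of elements acting on $V=\mathbb{R}^n$: the sign changes $p_i\mapsto \varepsilon_i p_i$ with $\varepsilon_i\in\{\pm 1\}$, and the coordinate permutations $p_i\mapsto p_{\sigma(i)}$ for $\sigma\in S_n$. Since the group is generated by these, it suffices to show that the bilinear form $g=g_{ij}(p)\,dp_i\otimes dp_j$ with $g_{ij}(p)=\bigl(\tfrac{1}{n-1}-\delta_{ij}\bigr)p_ip_j$ is preserved by each generator.

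For the sign changes, first consider the reflection $r_k$ sending $p_k\mapsto -p_k$ and fixing the other coordinates. Under $r_k$ one has $dp_k\mapsto -dp_k$, and the coefficient $g_{ij}(p)$ picks up a factor $(-1)$ exactly when exactly one of $i,j$ equals $k$; when neither or both equal $k$ the coefficient is unchanged (note $g_{kk}(p)=\bigl(\tfrac{1}{n-1}-1\bigr)p_k^2$ is even in $p_k$). In the term $g_{ij}(p)\,dp_i\otimes dp_j$ the sign from the coefficient and the sign from $dp_i\otimes dp_j$ therefore always cancel: if exactly one index is $k$ we get $(-1)\cdot(-1)=1$, and otherwise $(+1)\cdot(+1)=1$. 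Hence $r_k^*g=g$, and composing such reflections handles all sign changes. For the permutation $\tau=(k\,l)$ swapping $p_k$ and $p_l$, the coefficient $g_{ij}(p)=\bigl(\tfrac{1}{n-1}-\delta_{ij}\bigr)p_ip_j$ is visibly symmetric under simultaneously swapping the values $p_k\leftrightarrow p_l$ and relabelling indices $k\leftrightarrow l$, because $\delta_{ij}$ is unchanged by relabelling and $p_ip_j$ is a symmetric monomial in the coordinates; pulling back $dp_i\otimes dp_j$ under $\tau$ just relabels the one-forms accordingly, so $\tau^*g=g$. Since transpositions generate $S_n$, all coordinate permutations preserve $g$.

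A clean way to organize the computation, which I would actually write out, is to split $g_{ij}$ as $\tfrac{1}{n-1}p_ip_j-\delta_{ij}p_i^2$, so that
\[
g=\frac{1}{n-1}\Bigl(\sum_i p_i\,dp_i\Bigr)\otimes\Bigl(\sum_j p_j\,dp_j\Bigr)-\sum_i p_i^2\,dp_i\otimes dp_i
=\frac{1}{4(n-1)}\,d\rho\otimes d\rho-\sum_i p_i^2\,dp_i\otimes dp_i,
\]
where $\rho=\sum_i p_i^2$ is the standard $B_n$-invariant quadratic form. The first summand is manifestly invariant since $d\rho$ is invariant. For the second summand, note $\sum_i p_i^2\,dp_i\otimes dp_i=\tfrac14\sum_i d(p_i^2)\otimes d(p_i^2)$; the $B_n$-action permutes the functions $p_i^2$ among themselves and leaves each invariant under sign changes, so this sum of tensor squares is invariant as well. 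This makes the whole verification transparent and avoids any index bookkeeping.

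The only mild subtlety, and the one point worth stating carefully, is that the two pieces of $g_{ij}$ are individually $B_n$-invariant tensors, so no cancellation between them is needed; the decomposition above shows $g$ is a linear combination of two invariant symmetric bilinear forms and is therefore invariant. Invariance of the cometric $g^{ij}=\tfrac{1-\delta^{ij}}{p_ip_j}$ then follows immediately, since by the preceding lemma it is the inverse of $g_{ij}$ and the inverse of an invariant non-degenerate bilinear form is invariant. I expect no real obstacle here — the proposition is a direct check — so the main thing is simply to present the generator-by-generator argument (or the $d\rho$-decomposition) cleanly rather than wading through the full double sum over $i,j$.
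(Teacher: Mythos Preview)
Your proposal is correct. The generator-by-generator check in your first paragraph is essentially the paper's argument: the paper verifies invariance under the reflections across $\{p_j=0\}$ (your sign changes $r_k$) and across $\{p_i-p_j=0\}$ (your transpositions) by the same sign-and-relabelling bookkeeping, then notes that the remaining reflections across $\{p_i+p_j=0\}$ are compositions of these.

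Your second argument, however, is a genuinely different and cleaner route. Writing
\[
g=\frac{1}{4(n-1)}\,d\rho\otimes d\rho-\frac{1}{4}\sum_i d(p_i^2)\otimes d(p_i^2),\qquad \rho=\sum_i p_i^2,
\]
exhibits $g$ as a linear combination of two tensors that are \emph{manifestly} $B_n$-invariant: $\rho$ is the basic degree-two invariant, and the $p_i^2$ are permuted amongst themselves by $B_n$. This bypasses all index chasing and makes the result transparent; it also suggests why this particular metric is natural (it is built from the squared coordinates, which are the ``right'' variables for $B_n$). The paper's approach, by contrast, is more hands-on and does not reveal this structure, though it has the minor advantage of being entirely self-contained at the level of matrices. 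If you present the decomposition argument, you can safely omit the first paragraph's case check altogether.
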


\begin{proof}
The action of $B_n$ on $V$ is generated by reflections with respect to the hyperplanes $\{p_j=0\}, j=1,\dots, n$ and $\{p_i\pm p_j=0\}, i,j=1,\dots, n, i<j.$ We denote by $A_{p_j}$ the Jacobian of the transformation associated to the reflection with respect to the hyperplane $\{p_j=0\}$, and analogously for $A_{p_i\pm p_j}$.

The matrix $A_{p_j}$ is a constant diagonal matrix with $1$s on the main diagonal except in position $(j,j)$ where there is $-1$. 
Under the action of the reflection with respect to the hyperplane $\{p_j=0\}$, the metric transforms as $(A_{p_j})^TgA_{p_j}(p=\tilde p)$ where $g$ is the matrix associated to the metric, $T$ denotes transposition and $p=\tilde p$ means that after the matrix operations have been completed, the metric is rewritten in terms of the new coordinates $p_i=\tilde p_i$ for $i\neq j$ and $p_j=-\tilde p_j$. Now it is immediate to see that the action of $A_{p_j}$ on $g$ is to change the sign of all terms that contain $p_j$ except the diagonal term $(\frac{1}{n-1}-1)p_j^2$. Then once it is rewritten in terms of the coordinates $\tilde p$, the metric coincides with the original one. 

As for the reflections with respect to the hyperplanes $\{p_i-p_j=0\}$ we argue as follows. The matrix $A_{p_i-p_j}$ is a constant matrix with $1$s on the main diagonal, except in position $(i,i)$ and $(j,j)$ where there is zero and it has $1$ in position $(i,j)$ and $(j,i)$, while all the other entries are zero. Notice that $A_{p_i-p_j}^T=A_{p_i-p_j}$ and that $A_{p_i-p_j}$ is the matrix representation of a transposition. Therefore, when $A_{p_i-p_j}$ acts on the left on a column vector, it exchanges the positions of $i$-th and $j$-th components of the column vector but it leaves the other unchanged. Similarly, when $A_{p_i-p_j}$ acts on the right on a row vector, it exchanges the positions of $i$-th and $j$-th components of the row vector but it leaves the other unchanged. Thus, $A_{p_i-p_j}^T gA_{p_i-p_j}=A_{p_i-p_j}gA_{p_i-p_j}$ is obtained from $g$ first exchanging the $i$-th and $j$-th rows and then exchanging the $i$-th and $j$-th columns (or first working with the columns and then with the rows) and leaving the rest unchanged. By the form of the columns and rows of $g$, after performing the change of variables $p_k=\tilde p_k$ $k\neq i,j$, $p_i=\tilde p_j$ and $p_j=\tilde p_i$, $A_{p_i-p_j}^T gA_{p_i-p_j}$ coincides with $g$.

Reflections with respect to the hyperplane $\{p_i+p_j=0\}$ are obtained as composition of reflections with respect to the hyperplanes $\{p_i=0\}$, $\{p_j=0\}$ and $\{p_i-p_j=0\}$. To see this, just observe that the matrix $A_{p_i+p_j}$  is a constant  matrix with $1$s on the main diagonal except in positions $(j,j)$ and $(i,i)$ where there is $0$, and it has $-1$ in positions $(i,j)$ and $(j,i)$. Therefore $A_{p_i+p_j}=A_{p_i}A_{p_j}A_{p_i-p_j}$. Now invariance follows from the previous paragraphs. The proposition is proved.

\end{proof}

 Recall that the elementary symmetric polynomials $f_1,\dots,f_n$, in the variables $t_1,\dots,t_n$, are defined by
\begin{equation*}
f_k=\sum_{1 \leqslant i_1<\dots<i_k \leqslant n}  t_{i_1}\cdots t_{i_k},\,k=1,\dots,n.
\end{equation*}

Let $u_0:=1$, $u_k:=0,\,\forall k\geq n+1$ and 

\begin{equation}
u_i:=f_i(p_1^1,\dots,p_n^2),\,i=1,\dots,n.\label{eq:defu}
\end{equation}

The previous result implies the following 

\begin{lemma}\label{cor:invg}
The cometric $g^{ij}(u):=g^{kl}(p)\frac{\partial u_i}{\partial p_k}\frac{\partial u_j}{\partial p_l}$ can be written in terms of the invariant polynomials and it is well-defined on the quotient. Moreover, for each $i$ and $j$, $g^{ij}(u)$ is a homogeneous polynomial in the $p$-variables of degree $2i+2j-4$, which 
depends at most linearly on $u_{n-1}$. In particular, 
\beq\label{g^{11}eq}
g^{11}(u)=4(n^2-n).
\eeq
\end{lemma}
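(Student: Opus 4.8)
The plan is to reduce the statement to an explicit polynomial identity by passing to the squared variables $t_k:=p_k^2$, in which the basic invariants become the elementary symmetric polynomials $u_i=f_i(t_1,\dots,t_n)$. Since $\partial u_i/\partial p_k=2p_k\,(\partial f_i/\partial t_k)\big|_{t=p^2}$, substituting this together with $g^{kl}(p)=(1-\delta^{kl})/(p_kp_l)$ into $g^{ij}(u)=g^{kl}(p)\,\partial_{p_k}u_i\,\partial_{p_l}u_j$ makes the factors $p_kp_l$ cancel the denominators, so that
\beq\label{planeq:gij}
g^{ij}(u)=4\sum_{k\neq l}\left.\frac{\partial f_i}{\partial t_k}\frac{\partial f_j}{\partial t_l}\right|_{t_m=p_m^2}=4\left[(n{+}1{-}i)(n{+}1{-}j)\,u_{i-1}u_{j-1}-\sum_{k=1}^{n}\left.\frac{\partial f_i}{\partial t_k}\frac{\partial f_j}{\partial t_k}\right|_{t_m=p_m^2}\right],
\eeq
the second equality using the elementary identity $\sum_k\partial f_i/\partial t_k=(n{+}1{-}i)f_{i-1}$. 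The right-hand side is manifestly a polynomial in the $p_m^2$; and since the contraction $g^{ij}(u)=g(du_i,du_j)$ is $B_n$-invariant by Proposition~\ref{Lemma:inv}, it is invariant under permutations and sign changes of the $p_m$, hence a symmetric polynomial in $p_1^2,\dots,p_n^2$, and therefore a polynomial in $u_1,\dots,u_n$. This gives the first claim (well-definedness on the quotient and polynomiality in the invariants).

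For the homogeneity, I read off from the first form of \eqref{planeq:gij} that $\partial f_i/\partial t_k$ is homogeneous of degree $i-1$ in the $t$'s, hence of degree $2(i-1)$ in the $p$'s, so each summand — and therefore $g^{ij}(u)$ — is homogeneous of degree $2(i-1)+2(j-1)=2i+2j-4$ in $p$. Taking $i=j=1$, every $\partial f_1/\partial t_k$ equals $1$, so $g^{11}(u)=4\sum_{k\neq l}1=4n(n-1)=4(n^2-n)$, of degree $0$ as it should be. For the dependence on $u_{n-1}$ I would use the weighted grading $\deg u_m=2m$: since $g^{ij}(u)$ is weight-homogeneous of weight $2i+2j-4$, a monomial divisible by $u_{n-1}^2$ would carry weight at least $4(n-1)$, which forces $i=j=n$; hence for all $(i,j)\neq(n,n)$ the dependence on $u_{n-1}$ is automatically at most linear. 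The single remaining case $g^{nn}$ I would settle by a direct computation: $\partial u_n/\partial p_k=2u_n/p_k$ gives $g^{nn}(u)=4u_n^2\sum_{k\neq l}(p_kp_l)^{-2}=8u_n^2\,e_2(p_1^{-2},\dots,p_n^{-2})$, and the classical identity $e_2(p_1^{-2},\dots,p_n^{-2})=e_{n-2}(p_1^2,\dots,p_n^2)/e_n(p_1^2,\dots,p_n^2)=u_{n-2}/u_n$ yields $g^{nn}(u)=8u_{n-2}u_n$, which is independent of $u_{n-1}$. Thus $g^{ij}(u)$ is at most linear in $u_{n-1}$ for every $i,j$.

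The computation is elementary throughout; the one point that really matters is the cancellation of the $p_kp_l$ denominators in \eqref{planeq:gij}, since that is precisely what promotes $g^{ij}(u)$ from a rational function to a polynomial. After that, the homogeneity count and the grading argument for $u_{n-1}$ are routine, and the only nontrivial identity required is the classical one expressing the elementary symmetric functions of the reciprocals $p_k^{-2}$ in terms of the $u_i$. I expect the only mildly delicate bookkeeping to be the verification of \eqref{planeq:gij} itself — in particular that its right-hand side is genuinely symmetric in $p_1^2,\dots,p_n^2$, which here is guaranteed a priori by the $B_n$-invariance of $g$ established in Proposition~\ref{Lemma:inv}.
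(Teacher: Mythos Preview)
Your proof is correct and follows essentially the same approach as the paper: the cancellation of the $p_kp_l$ denominators via $\partial u_i/\partial p_k=2p_k(\partial f_i/\partial t_k)$, the appeal to $B_n$-invariance (Proposition~\ref{Lemma:inv}) for polynomiality in the $u$'s, the degree count $2i+2j-4$, and the weighted-degree argument ruling out $u_{n-1}^2$ when $(i,j)\neq(n,n)$ all match the paper exactly. The only minor divergence is the treatment of $g^{nn}$: the paper argues abstractly that $g^{nn}=u_n\cdot f$ with $\deg f=2n-4<2(n-1)$, whereas you go one step further and compute $g^{nn}=8u_{n-2}u_n$ explicitly via the identity $e_2(p_k^{-2})=u_{n-2}/u_n$ --- a slightly sharper but equivalent conclusion.
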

\begin{proof}
The homogeneity of the $g^{ij}(u)$s, as functions of the $p$-variables, is clear. Since all invariant polynomials are really polynomials in $p_1^2, \dots, p_n^2$ no matter which ones we choose, then $\frac{\partial u_j}{\partial p_k}$ contains a factor $p_k$ that cancels the factor $p_k$ in the denominator of $g^{kl}(p)$ and similarly for  $\frac{\partial u_i}{\partial p_l}.$ Thus $g^{ij}(u)$ has entries that are polynomials in the $p$-variables, and since it is invariant by Proposition \ref{Lemma:inv}, it can be written in terms of the invariant polynomials, and thus it is well-defined on the quotient. 

 As $u_i$ is a homogeneous polynomial in the $p$-variables of degree  ${\rm deg}(u_i)=2i$ and, for $k\neq l$, ${\rm deg}(g^{kl}(p))=-2$, see \eqref{eq:defu}, then 
\begin{equation}
 {\rm deg}(g^{ij}(u))=2i-1+2j-1-2=2(i+j)-4,\label{eq:degu}
\end{equation}
as function of the $p$-variables. 
 
For the $g^{ij}(u)$s above the anti-diagonal, i.e. for $i+j<n+1$, therefore we have ${\rm deg}(g^{ij}(u))=2(i+j)-4<2(n+1)-4=2(n-1)$, so those entries can not depend on $u_{n-1}$. All the entries with $(i,j)$ such that $n+1\leq i+j <2n$ depend at most linearly on $u_{n-1}$, since in this range we have $2n-2\leq {\rm deg}(g^{ij}(u)) <4n-4.$ Finally, since $u_n=(p_1\cdots p_n)^2,$ it is immediate to see that each term in the sum (over $k$ and $l$) $g^{nn}(u)=g^{kl}(p)\frac{\partial u_n}{\partial  p_l}\frac{\partial u_n}{\partial p_k}$ contains $u_n$. Since ${\rm deg}(u_n)=2n$ and ${\rm deg}(g^{nn}(u))=4n-4$, we can write $g^{nn}(u)=u_n f$, where $f$ is  polynomial in $p$ of degree $2n-4$, so $f$ can not contain $u_{n-1}$. This proves the claim. 

Now $$g^{11}(u)=g^{kl}(p)\frac{\partial u_1}{\partial p_k}\frac{\partial u_1}{\partial p_l}=\sum_{k,l=1,\dots, n} \frac{(1-\delta^{kl})}{p_kp_l}2p_k2p_l=$$
$$4\sum_{k,l=1, \dots, n}(1-\delta^{kl})=4(n^2-n),$$
thus proving \eqref{g^{11}eq}.
\end{proof}

\subsection{Flatness of $g$}

 Recall that the Christoffel symbols of the Levi-Civita connection $\nabla$ defined by the metric $g$ are the (locally defined) functions
 \begin{equation}
\Gamma^k_{ij}=\frac{1}{2}\sum_{m=1}^ng^{mk}\left(\frac{\partial g_{im}}{\partial p^j}+\frac{\partial g_{jm}}{\partial p^i}-\frac{\partial g_{ij}}{\partial p^m}\right),\label{eq:c2}
\end{equation}
and that the contravariant components of $\nabla$ are 
\begin{equation}
\Gamma^{ij}_{k}(p):=-\sum_{s=1}^ng^{is}(p)\Gamma^j_{sk}(p),\;i,j,k=1,\dots,n.\label{eq:c3}
\end{equation}
Let $g$ be defined as in \eqref{eq:c1}. Then

\begin{lemma}\label{Lemma:ChrisSymbs}
 One has that
\begin{equation}
\Gamma^i_{ii}(p)=\frac{1}{p^i}\quad\text{\rm and}\quad \Gamma^{k}_{ij}(p)=0\quad\text{\rm otherwise}.\label{c3}
\end{equation}
\end{lemma}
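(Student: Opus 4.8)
The plan is to compute the Christoffel symbols directly from the explicit form of the metric and cometric, exploiting the fact that $g^{ij}(p) = (1-\delta^{ij})/(p_ip_j)$ and $g_{ij}(p) = (\frac{1}{n-1}-\delta_{ij})p_ip_j$ are extremely structured: the cometric has a vanishing diagonal, and the partial derivatives of $g_{ij}$ are monomials in the $p$-variables.

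First I would record the partial derivatives of the covariant metric. Writing $g_{ij} = (\frac{1}{n-1}-\delta_{ij})p_ip_j$, one has $\partial_m g_{ij} = (\frac{1}{n-1}-\delta_{ij})(\delta_{im}p_j + \delta_{jm}p_i)$, which is nonzero only when $m\in\{i,j\}$. Then I would substitute into the standard formula \eqref{eq:c2} for $\Gamma^k_{ij}$ and split into cases according to how many of the indices $i,j,k$ coincide. Because $g^{mk}$ is supported off the diagonal, the sum over $m$ in \eqref{eq:c2} only picks up terms with $m\neq k$, and combined with the $\delta$-support of $\partial_m g_{\bullet\bullet}$ this collapses the sum to very few terms. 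The case $i=j=k$ should give $\Gamma^i_{ii} = \frac12 \sum_{m\neq i} g^{mi}\big(2\partial_m g_{ii} - \partial_i g_{im} - \ldots\big)$; carefully bookkeeping the three terms inside the parenthesis (noting $\partial_m g_{ii} = 0$ for $m\neq i$, so actually only $-\partial_i g_{ij}$-type terms survive, evaluated appropriately) should yield $1/p_i$ after the $p$-factors telescope against the $1/(p_mp_i)$ in $g^{mi}$ and the count of $(n-1)$ surviving values of $m$ cancels the $\frac{1}{n-1}$. For the remaining cases — $i=j\neq k$, $i=k\neq j$ (and its mirror $j=k\neq i$), and $i,j,k$ all distinct — I would check that the surviving terms cancel in pairs, giving $0$.

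Alternatively, and perhaps more cleanly, I would verify \eqref{c3} by checking that the proposed connection is metric and torsion-free: torsion-freeness ($\Gamma^k_{ij}=\Gamma^k_{ji}$) is manifest, so it suffices to check $\nabla_k g_{ij} = \partial_k g_{ij} - \Gamma^m_{ki}g_{mj} - \Gamma^m_{kj}g_{im} = 0$ for the candidate $\Gamma$, which by uniqueness of Levi-Civita forces the identification. With only the diagonal symbols $\Gamma^i_{ii}=1/p_i$ nonzero, this reduces to verifying $\partial_k g_{ij} = \Gamma^i_{ki}g_{ij}\,[i=k?] + \ldots$, i.e. a handful of monomial identities like $\partial_i\big((\frac{1}{n-1}-\delta_{ij})p_ip_j\big) = \frac{1}{p_i}(\frac{1}{n-1}-\delta_{ij})p_ip_j$ when $i\neq j$, which is clearly true, and the analogous check when $i=j$. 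One also needs the contravariant consistency, but that follows automatically. Finally I would note \eqref{eq:c3} is then just bookkeeping: $\Gamma^{ij}_k = -g^{is}\Gamma^j_{sk}$ is nonzero only when $s=j=k$ and picks up $g^{ij}$, but this is not needed for the statement as written.

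The main obstacle is purely organizational: keeping track of the overlapping Kronecker deltas (the $\delta^{ij}$ in $g^{ij}$, the $\delta_{ij}$ in $g_{ij}$, and the $\delta$'s generated by differentiation) without double-counting or sign errors, and correctly handling the boundary in the sum $\sum_{m\neq i}$ where the count $n-1$ conspires with the $\frac{1}{n-1}$ in $g_{ij}$. I expect the metricity check to be the shorter route and would present that, falling back on the direct computation via \eqref{eq:c2} only if a referee wants the symbols derived rather than verified.
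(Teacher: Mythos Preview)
Your proposal is correct. Your preferred route---verifying that the candidate connection with only $\Gamma^i_{ii}=1/p_i$ nonzero is torsion-free and metric, then invoking uniqueness of the Levi-Civita connection---is genuinely different from the paper's argument, and it works cleanly: with $\partial_k g_{ij}=(\tfrac{1}{n-1}-\delta_{ij})(\delta_{ik}p_j+\delta_{jk}p_i)$ and $\Gamma^m_{ki}g_{mj}=\delta_{ki}(\tfrac{1}{n-1}-\delta_{ij})p_j$, the three terms in $\nabla_k g_{ij}$ cancel on the nose without any case split or use of the explicit cometric.

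The paper instead computes $\Gamma^k_{ij}$ directly from \eqref{eq:c2}, but avoids your case-by-case bookkeeping via a single observation you do not mention: the derivative can be rewritten as $\partial_j g_{im}=g_{im}\big(\tfrac{\delta_{ij}}{p_i}+\tfrac{\delta_{mj}}{p_m}\big)$, i.e.\ as the metric entry itself times a logarithmic factor. Substituted into \eqref{eq:c2}, every sum over $m$ then collapses via the inverse relation $\sum_m g^{mk}g_{im}=\delta^k_i$, yielding $\Gamma^k_{ij}=\tfrac{\delta_{ij}}{2}\big(\tfrac{\delta_{ik}}{p_i}+\tfrac{\delta_{jk}}{p_j}\big)$ in one stroke. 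This is slicker than the four-case split you sketch for Approach~1, and it explains conceptually why the answer is so simple (the metric is ``logarithmically flat''). Your metricity check buys essentially the same economy by a different mechanism and has the mild advantage of never invoking the formula for $g^{ij}$ at all.
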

\begin{proof} In the following proof all the metric coefficients and all Christoffel symbols depend only on the $p$-variables. To prove \eqref{c3}, first one computes
\begin{align*}
\frac{\partial g_{im}}{\partial p_j}&\stackrel{\eqref{eq:c1}}{=}\frac{\partial}{\partial p_j}\left[\left(\frac{1}{n-1}-\delta_{im}\right)p_ip_m\right]\\
&=\left(\frac{1}{n-1}-\delta_{im}\right)(\delta_{ji}p_m+\delta_{jm}p_i)\\
&=g_{im}\left(\frac{\delta_{ij}}{p_i}+\frac{\delta_{mj}}{p_m}\right).
\end{align*}
This yields
\begin{align*}
g^{mk}\left(\frac{\partial g_{im}}{\partial p_j}+\frac{\partial g_{jm}}{\partial p_i}-\frac{\partial g_{ij}}{\partial p^m}\right)=g^{mk}\left[g_{im}\left(\frac{\delta_{ij}}{p_i}+\frac{\delta_{mj}}{p_m}\right)+g_{jm}\left(\frac{\delta_{ij}}{p_j}+\frac{\delta_{mi}}{p_m}\right)-g_{ij}\left(\frac{\delta_{im}}{p_i}+\frac{\delta_{mj}}{p_j}\right)\right],
\end{align*}
which inserted in \eqref{eq:c2} gives
\begin{align*}
\Gamma^k_{ij}&=\frac{\delta_{ij}}{2}\left[\frac{1}{p_i}\sum_{m=1}^ng^{mk}g_{im}+\frac{1}{p_j}\sum_{m=1}^ng^{mk}g_{jm}\right]\\
&+\frac{1}{2}\left[\sum_{m=1}^ng^{mk}g_{im}\frac{\delta_{mj}}{p_m}+\sum_{m=1}^ng^{mk}g_{jm}\frac{\delta_{mi}}{p_m}\right]\\
&-\frac{g_{ij}}{2}\left[\frac{1}{p_i}\sum_{m=1}^ng^{mk}\delta_{im}+\frac{1}{p_j}\sum_{k=1}^ng^{mk}\delta_{mj}\right]\\
&=\frac{\delta_{ij}}{2}\left(\frac{\delta_{ik}}{p_i}+\frac{\delta_{kj}}{p_j}\right)+\frac{1}{2}\left(\frac{g^{jk}g_{ij}}{p_j}+\frac{g^{ik}g_{ij}}{p_i}\right)-\frac{g_{ij}}{2}\left(\frac{g^{ik}}{p_i}+\frac{g^{jk}}{p_j}\right)
\end{align*}
i.e.
\[
\Gamma^{k}_{ij}=\frac{\delta_{ij}}{2}\left(\frac{\delta_{ik}}{p_i}+\frac{\delta_{jk}}{p_j}\right),
\]
which entails the thesis.
\end{proof}

\begin{proposition}\label{prop:gflatness}
The metric $g_{ij}$ is flat. 
\end{proposition}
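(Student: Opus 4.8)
The plan is to show that the curvature tensor of the Levi-Civita connection $\nabla$ of $g$ vanishes identically, using the very explicit description of the Christoffel symbols provided by Lemma \ref{Lemma:ChrisSymbs}. Since in coordinates $(p_1,\dots,p_n)$ the only nonzero symbols are $\Gamma^i_{ii}=1/p_i$, the computation of
\[
R^l_{kij}=\partial_i\Gamma^l_{jk}-\partial_j\Gamma^l_{ik}+\Gamma^l_{im}\Gamma^m_{jk}-\Gamma^l_{jm}\Gamma^m_{ik}
\]
collapses dramatically. First I would record that $\Gamma^l_{jk}$ is nonzero only when $l=j=k$, so $\partial_i\Gamma^l_{jk}$ can be nonzero only if $j=k=l$, in which case it equals $\partial_i(1/p_l)=-\delta_{il}/p_l^2$; in particular the derivative terms already force $l=j=k=i$ before they can contribute, so the antisymmetric pair $\partial_i\Gamma^l_{jk}-\partial_j\Gamma^l_{ik}$ vanishes whenever $i\neq j$ (and trivially when $i=j$). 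Likewise the quadratic term $\Gamma^l_{im}\Gamma^m_{jk}$ is nonzero only when $l=i=m$ and $m=j=k$, hence only when $i=j=k=l=m$, and the same for the second quadratic term; thus $\Gamma^l_{im}\Gamma^m_{jk}-\Gamma^l_{jm}\Gamma^m_{ik}$ is manifestly antisymmetric and supported on $i=j$, so it vanishes too. Combining these observations gives $R^l_{kij}=0$ for all indices.

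Concretely I would organize the proof as a short case analysis on the index pattern: (i) if $i=j$ every term in $R^l_{kij}$ is antisymmetric in $i,j$ and hence zero; (ii) if $i\neq j$, then for the derivative terms to survive one needs $j=k=l$ in the first and $i=k=l$ in the second, which is impossible simultaneously unless $i=j$, contradiction — so each derivative term is individually zero unless its own multi-index collapses, but then it is a diagonal term $-1/p_l^2$ that is cancelled against nothing and must be checked to cancel, which it does because the partner term $\partial_j\Gamma^l_{ik}$ requires $i=k=l$ while the first required $j=k=l$, incompatible with $i\neq j$; hence when $i\neq j$ neither derivative term is nonzero. A parallel sentence handles the two $\Gamma\Gamma$ terms. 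It may be cleanest to simply note that each of the four summands of $R^l_{kij}$ is, as shown, nonzero only on the ``fully diagonal'' locus $i=j=k=l$ (with $m$ forced equal as well), and on that locus the curvature expression reads $\partial_i(1/p_i)-\partial_i(1/p_i)+(1/p_i)(1/p_i)-(1/p_i)(1/p_i)=0$.

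The only real bookkeeping obstacle is keeping the upper/lower index positions straight and making sure the symmetrizations in the Christoffel formula \eqref{eq:c2} were used correctly — but that has already been done in Lemma \ref{Lemma:ChrisSymbs}, so here the argument is essentially combinatorial. Alternatively, and perhaps more conceptually, one could remark that the change of coordinates $q_i:=\tfrac12\log p_i^2$ (on the open set where all $p_i\neq 0$) turns $\Gamma^i_{ii}=1/p_i$ into vanishing Christoffel symbols — indeed under $p_i=e^{q_i}$ one has $\partial/\partial p_i = e^{-q_i}\partial/\partial q_i$ and the transformation rule for connection coefficients kills the single surviving symbol — exhibiting $\nabla$ as the standard flat connection in the $q$-coordinates and hence flat. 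I would present the direct index computation as the main proof and mention the logarithmic-coordinate viewpoint as a remark, since it also explains geometrically why $g$ is flat and foreshadows the appearance of logarithms later in the construction.
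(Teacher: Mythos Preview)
Your main argument---the direct index computation of $R^l_{kij}$ using that the only nonzero Christoffel symbol is $\Gamma^i_{ii}=1/p_i$---is correct and complete. Each of the four terms in the curvature is indeed supported only on the fully diagonal locus $i=j=k=l$, where the antisymmetry in $(i,j)$ forces everything to cancel. The paper mentions this direct route but actually writes out a slightly different argument via the connection $1$-forms: since $\omega^i_j=\Gamma^i_{jk}dp^k$ is diagonal with $\omega^i_i=d(\log p^i)$ exact, the curvature $2$-form $\Omega^i_j=d\omega^i_j+\omega^i_k\wedge\omega^k_j$ vanishes on the nose (the diagonal entries give $d\omega^i_i=0$ and $\omega^i_i\wedge\omega^i_i=0$, while off-diagonal entries are zero from the start). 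This packages your case analysis into two lines; conversely, your version has the virtue of being entirely elementary and requiring no Cartan formalism.

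Your alternative remark, however, contains an error. The change of variables $q_i=\log p_i$ does \emph{not} kill the Christoffel symbols: since $\partial_{q_i}=p_i\partial_{p_i}$, one computes
\[
\nabla_{\partial_{q_i}}\partial_{q_i}=p_i\nabla_{\partial_{p_i}}(p_i\partial_{p_i})=p_i\bigl(\partial_{p_i}+p_i\cdot\tfrac{1}{p_i}\partial_{p_i}\bigr)=2\partial_{q_i},
\]
so $\tilde\Gamma^i_{ii}=2$ in the $q$-coordinates, not $0$. The coordinate change that \emph{does} trivialize the problem is $s_i=p_i^2$: in these variables the cometric becomes $\tilde g^{ab}=4(1-\delta^{ab})$, a constant matrix, so flatness is immediate. (The paper records this as a fourth proof.) If you want to keep a ``coordinate-change'' remark, replace the logarithm by the square.
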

\begin{proof}
This can be proved by direct computation of the Riemann tensor using the Christoffel symbols \eqref{c3}. A quicker way to do this is to introduce the connection $1$-form $\omega^i_j:=\Gamma^i_{jk}dp^k$ and the corresponding curvature $2$-forms $\Omega^i_j:=d\omega^i_j +\omega^i_k\wedge \omega^k_j.$
Due to \eqref{c3} we have that $\omega^i_j=0$ if $i\neq j$ and $\omega^i_i=\frac{dp^i}{p^i}=d(\log(p^i))$, which imply $\Omega^i_j=0$, if $i\neq j$ and $\Omega^i_i=\omega^i_i\wedge \omega^i_i=\frac{dp^i\wedge dp^i}{(p^i)^2}=0$ (no sum over $i$) otherwise. 
So the curvature two-form is identically vanishing, which implies that the Riemann tensor vanishes too. 
A third way to prove the flatness of $g$ is to observe that the connection defined by \eqref{c3} is a logarithmic connection with weights that are invariant under the action of $B_n$ (see Example 2.5 in \cite{CHL}). Finally one can prove flatness checking that in the coordinates $(p_1^2,\dots,p_n^2)$ the metric $g$ becomes constant.
\end{proof}

\subsection{Definition of $\eta$} 
In this subsection we introduce $\eta$ as a Lie derivative with respect to the second highest degree invariant polynomial  of the cometric $g^{ij}(u)$, see Lemma \ref{cor:invg}. 
From this, some essential properties of the bilinear form $\eta$ will follow.

\begin{proposition}\label{prop:eta}
The Lie derivative with respect to the vector field $\frac{\partial}{\partial u_{n-1}}$ of the intersection form $g^{ij}(u)$ 
is given by the formula 
\begin{equation}
\eta ^{ij}(u)=\frac{\partial g^{ij}}{\partial u_{n-1}}(u)=4(2n-i-j)u_{i+j-n-1}.\label{eq:eta}
\end{equation}
Hence, $\eta ^{ij}(u)$ is a non-degenerate Hankel matrix with all vanishing entries above the anti-diagonal.
In particular, the entries of the anti-diagonal $i+j=n+1$ are  
\begin{equation*}
\eta ^{i,n-i+1}(u)=4(n-1).
\end{equation*}
\label{formula}g
\end{proposition}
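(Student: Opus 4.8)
The plan is to determine all the entries $g^{ij}(u)$ at once through a two-variable generating function and then differentiate with respect to $u_{n-1}$. Since $(u_1,\dots,u_n)$ are coordinates on the orbit space, $\partial/\partial u_{n-1}$ is a coordinate vector field, so its Lie derivative acts on the components of the contravariant tensor $g$ simply as $\partial/\partial u_{n-1}$; hence it suffices to establish the closed formula for $\partial g^{ij}/\partial u_{n-1}$. Set $x_m:=p_m^2$, so that by \eqref{eq:defu} $u_i=e_i(x_1,\dots,x_n)$ is the $i$-th elementary symmetric polynomial in the $x_m$ and $\partial u_i/\partial p_k=2p_k\,e_{i-1}(x\setminus x_k)$, where $e_{i-1}(x\setminus x_k)$ denotes the elementary symmetric polynomial of degree $i-1$ in the $n-1$ variables $\{x_m:m\neq k\}$. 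Substituting this and \eqref{eq:c1} into $g^{ij}(u)=g^{kl}(p)\,\frac{\partial u_i}{\partial p_k}\frac{\partial u_j}{\partial p_l}$, the factors $p_k,p_l$ cancel the denominators in $g^{kl}(p)$ and one is left with the manifestly symmetric expression
\[
g^{ij}(u)=4\sum_{k\neq l}e_{i-1}(x\setminus x_k)\,e_{j-1}(x\setminus x_l),\qquad 1\le i,j\le n .
\]

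Next, introduce $E(s):=\prod_{m=1}^n(1+sx_m)=\sum_{i=0}^n u_i s^i$ and $F(s):=\sum_{i=0}^n(n-i)u_i s^i=nE(s)-sE'(s)$, and consider $G(s,t):=\sum_{i,j=1}^n g^{ij}(u)\,s^{i-1}t^{j-1}$. Because $e_p(x\setminus x_k)$ vanishes for $p\ge n$, one has $\sum_{i\ge1}e_{i-1}(x\setminus x_k)\,s^{i-1}=\prod_{m\neq k}(1+sx_m)=E(s)/(1+sx_k)$ \emph{exactly}, whence $G(s,t)=4E(s)E(t)\sum_{k\neq l}\bigl[(1+sx_k)(1+tx_l)\bigr]^{-1}$. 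Writing $\sum_{k\neq l}=\sum_k\sum_l-\sum_{k=l}$, using $A(s):=\sum_k(1+sx_k)^{-1}=n-sE'(s)/E(s)=F(s)/E(s)$ and the partial-fraction identity $\bigl[(1+sx)(1+tx)\bigr]^{-1}=(s-t)^{-1}\bigl[s(1+sx)^{-1}-t(1+tx)^{-1}\bigr]$ (which gives $\sum_{k=l}\bigl[(1+sx_k)(1+tx_k)\bigr]^{-1}=(sA(s)-tA(t))/(s-t)$), and multiplying through by $E(s)E(t)$, one arrives at the closed form
\[
\tfrac14\,G(s,t)=F(s)F(t)-\frac{sF(s)E(t)-tF(t)E(s)}{s-t}.
\]
The step to verify carefully is that the right-hand side is genuinely a polynomial of degree $\le n-1$ in each of $s,t$: the apparent pole at $s=t$ is removable because the numerator vanishes there, and the degree bound matches the left-hand side, so this is an honest polynomial identity and not just one of rational functions. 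This is the only mildly delicate point; it is the generating-function shadow of the integrality and degree statements of Lemma~\ref{cor:invg}, and the real gain over pure degree counting is that it pins down the \emph{exact} coefficient, showing $\eta^{ij}$ is a single monomial rather than an arbitrary weighted-homogeneous polynomial of the right degree.

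Finally, differentiate and extract coefficients. Since $\partial E(s)/\partial u_{n-1}=\partial F(s)/\partial u_{n-1}=s^{n-1}$, differentiating the closed form and performing a short cancellation gives
\[
\tfrac14\,\frac{\partial G}{\partial u_{n-1}}(s,t)=\frac{s^n\bigl(F(t)-E(t)\bigr)-t^n\bigl(F(s)-E(s)\bigr)}{s-t}.
\]
Writing $F(s)-E(s)=\sum_{i=0}^n(n-1-i)u_i s^i$ and expanding with $\frac{s^{n-i}-t^{n-i}}{s-t}=\sum_{a=0}^{n-i-1}s^at^{n-i-1-a}$, one reads off that the coefficient of $s^{i-1}t^{j-1}$ in $\partial G/\partial u_{n-1}$ is $4(2n-i-j)u_{i+j-n-1}$; since $\partial G/\partial u_{n-1}=\sum_{i,j}\eta^{ij}s^{i-1}t^{j-1}$, this is precisely \eqref{eq:eta}. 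The remaining assertions then follow at once: $\eta^{ij}$ depends only on $i+j$, so the matrix is Hankel; if $i+j\le n$ then $i+j-n-1<0$ and the entry vanishes, i.e.\ all entries strictly above the anti-diagonal are zero; on the anti-diagonal $i+j=n+1$ one has $\eta^{i,\,n+1-i}=4\bigl(2n-(n+1)\bigr)u_0=4(n-1)$; and a matrix that vanishes strictly above the anti-diagonal and equals the nonzero constant $4(n-1)$ on it has determinant $\pm\bigl(4(n-1)\bigr)^n\neq0$ (reverse the order of the columns to make it triangular), hence $\eta$ is non-degenerate.
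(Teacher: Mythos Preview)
Your argument is correct and follows essentially the same generating-function strategy as the paper: package all the $g^{ij}(u)$ into a two-variable generating function built from the product $\prod_m(\,\cdot\,+p_m^2)$, obtain a closed form, differentiate in $u_{n-1}$, and read off coefficients. The paper uses $h(x)=\prod_l(x+p_l^2)=\sum_k u_kx^{n-k}$ and arrives at $\tfrac14\sum_{i,j}\eta^{ij}x^{n-i}y^{n-j}=(x-y)^{-1}\bigl(h(x)-h(y)+xh'(x)-yh'(y)\bigr)$, extracting coefficients by differentiating in $x,y$ and evaluating at $0$; your version with $E(s)=\sum_i u_is^i=s^{\,n}h(1/s)$ and the geometric-series expansion of $(s^{n-r}-t^{n-r})/(s-t)$ is the same computation after the substitution $s=1/x$, $t=1/y$, and is arguably a little cleaner at the coefficient-extraction step. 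One minor notational remark: in your final paragraph the index $i$ in $\frac{s^{n-i}-t^{n-i}}{s-t}$ is the summation index from $F-E$, not the row index of $\eta^{ij}$; renaming it would avoid a small clash.
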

\begin{proof}
If
\begin{equation}
h(x)=\sum_{k=0} ^n u_k x^{n-k} = \prod _{l=1} ^n (x+p_l ^2)\label{eq:hdef}
\end{equation}
and 
\begin{equation*}
{g}^{ij}(u)=\sum_{s,k=1} ^n \frac{(1-\delta^{sk})}{p_s p_k}\dfrac{\partial u_i}{\partial p_s} \dfrac{\partial u_j}{\partial p_k},
\end{equation*}
one has
\begin{align*}
\dfrac{1}{4} \sum_{i,j=1} ^n g^{ij}(u) x^{n-i} y^{n-j}&=\dfrac{1}{4} \sum_{i,j=1} ^n \sum_{s,k=1} ^n \frac{(1-\delta^{sk})}{p^s p^k}\dfrac{\partial u_i}{\partial p_s} \dfrac{\partial u_j}{\partial p_k} x^{n-i} y^{n-j}\\
&=\dfrac{1}{4}  \sum_{s,k=1} ^n \frac{(1-\delta^{sk})}{p_s p_k} \dfrac{\partial}{\partial p_s} \biggl( \sum_{i=1} ^n  u_i x^{n-i} \biggl) \dfrac{\partial}{\partial p_k} \biggl ( \sum_{j=1} ^n  u_j y^{n-j} \biggl)\\
&\stackrel{u_0=1}{=}\dfrac{1}{4}  \sum_{s,k=1} ^n \frac{(1-\delta^{sk})}{p_s p_k} \dfrac{\partial}{\partial p_s} \biggl ( \sum_{i=0} ^n  u_i x^{n-i} \biggl ) \dfrac{\partial}{\partial p_k} \biggl ( \sum_{j=0} ^n  u_j y^{n-j} \biggl )\\
&\stackrel{\eqref{eq:hdef}}{=}\dfrac{1}{4}  \sum_{s,k=1} ^n \frac{(1-\delta^{sk})}{p_s p_k} \dfrac{\partial h(x)}{\partial p_s}  \dfrac{\partial h(y)}{\partial p_k}.
\end{align*}
Since
\begin{equation*}
\dfrac{\partial h(x)}{\partial p_s}= \dfrac{\partial}{\partial p_s} \prod_{l=1} ^n (x+p_l ^2) = 2p_s \prod_{l\neq s} ^n (x+p_l ^2 ),
\end{equation*}
\begin{equation*}
\begin{split}
 \dfrac{1}{4}  \sum_{s,k=1} ^n \frac{1}{p_s p_k} \dfrac{\partial h(x)}{\partial p_s}  \dfrac{\partial h(y)}{\partial p^k}&= \sum_{s,k=1} ^n \prod_{l\neq s} ^n (x+p_l ^2 )  \prod_{q\neq k} ^n (y+p_q ^2 )  \\
&= \sum_{s=1} ^n \prod_{l\neq s} ^n (x+p_l ^2 )  \biggl( \sum_{k=1} ^n \prod_{q\neq k} ^n (y+p_q ^2 ) \biggl)\\
&=h'(x) h'(y) \\
\end{split}
\end{equation*}
and
\begin{equation*}
\begin{split}
- \dfrac{1}{4}  \sum_{s,k=1} ^n \frac{\delta^{sk}}{p_s p_k} \dfrac{\partial h(x)}{\partial p_s}  \dfrac{\partial h(y)}{\partial p_k}&=- \dfrac{1}{4}  \sum_{k=1} ^n \frac{1}{p_k^2} \dfrac{\partial h(x)}{\partial p_k}  \dfrac{\partial h(y)}{\partial p_k}   \\
&=- \sum_{k=1} ^n \prod_{l\neq k} ^n (x+p_l ^2) \prod_{q \neq k} ^n (x+p_q ^2)\\
&=- \sum_{k=1} ^n 
\frac{h(x)h(y)}{(x+p_k ^2)(y+p_k ^2)}  \\
&= - \sum_{k=1} ^n \frac{-h(x)h(y)}{(x-y)(x+p_k ^2)}+ \frac{h(x)h(y)}{(x-y)(y+p_k ^2)}\\
&= -\frac{1}{x-y} \bigg( - \bigg( \sum_{k=1} ^n \frac{h(x)}{x+p_k ^2} \bigg) h(y) + \bigg( \sum_{k=1} ^n \frac{h(y)}{y+p_k ^2} \bigg) h(x) \bigg) \\
&= -\frac{h'(y) h(x)-h'(x) h(y)}{x-y},  \\
\end{split}
\end{equation*}
which yield 
\begin{equation*}
\dfrac{1}{4} \sum_{i,j=1} ^n g^{ij}(u) x^{n-i} y^{n-j}=h'(x) h'(y)-\frac{h'(y) h(x)-h'(x) h(y)}{x-y}.
\end{equation*}
Deriving both sides of the previous identity with respect to $u_{n-1}$ we obtain 
\begin{equation*}
\begin{split}
&  \dfrac{1}{4} \sum_{i,j=1} ^n \frac{\partial g^{ij}}{\partial u_{n-1}}(u) x^{n-i} y^{n-j}= \frac{\partial }{\partial u_{n-1}} \bigg( h'(x) h'(y)-\frac{h'(y) h(x)-h'(x) h(y)}{x-y} \bigg) \\
&  = x^0 \sum_{k=0} ^{n-1} u_k y^{n-k-1} +  y^0 \sum_{k=0} ^{n-1} u_k x^{n-k-1} - \frac{1}{x-y}\bigg(
-x^0 \sum_{k=0} ^{n} u_k y^{n-k} - y \sum_{k=0} ^{n-1} u_k x^{n-k-1} \\
& + y^0 \sum_{k=0} ^{n} u_k x^{n-k} + x \sum_{k=0} ^{n-1} u_k y^{n-k-1} \bigg) \\
& =h'(y)+ h'(x) - \frac{1}{x-y}\bigg(-h(y)-y h'(x)+h(x)+x h'(y) \bigg)\\
&= \frac{h(x)-h(y)+xh'(x)-yh'(y)}{x-y}.
\end{split}
\end{equation*}

Now we have to identify the entries of the matrix $\eta^{ij}(u)=\frac{\partial g^{ij}}{\partial u_{n-1}}(u)$ in the above expression. Deriving $k$ times with respect $x$ we have
\begin{equation*}
\begin{split}
&  \dfrac{1}{4} \sum_{i,j=1} ^n \eta ^{ij}(u) \frac{\partial ^k x^{n-i}}{\partial x^k} y^{n-j}
= \dfrac{1}{4} \sum_{i=1} ^{n-k} \sum_{j=1} ^n \frac{(n-i)!}{(n-i-k)!} \eta ^{ij}(u)  x^{n-i-k} y^{n-j}.
\end{split}
\end{equation*}
Evaluating at $x=0$ we obtain the term that does not depend on $x$, namely the term $i=n-k$
\begin{equation*}
\dfrac{1}{4} k! \sum_{j=1} ^n  \eta ^{n-k,j}(u)  y^{n-j}=\frac{\partial ^k}{\partial x^k} \bigg( \frac{h(x)-h(y)+xh'(x)-yh'(y)}{x-y} \bigg)  \bigg\vert _{x=0}.
\end{equation*}

Similarly, deriving $s$ times with respect $y$ we have
\begin{equation*}
\dfrac{1}{4} k! \sum_{j=1} ^{n-s} \frac{(n-j)!}{(n-j-s)!} \eta ^{n-k,j}(u)  y^{n-j-s}.
\end{equation*}
Evaluating at $y=0$ we obtain the term $j=n-s$, hence
\begin{equation*}
\dfrac{1}{4} k! s!  \ \eta ^{n-k,j-s}(u) =\frac{\partial ^{k+s}}{\partial x^k \partial y^s} \bigg( \frac{h(x)-h(y)+xh'(x)-yh'(y)}{x-y} \bigg)  \bigg\vert _{x=y=0}.
\end{equation*}
Now, we can find each entries of the matrix $\eta ^{ij}(u)$. The lemma is proved.
\end{proof}

\begin{remark}\label{remark:g11}
From now on, since we want $\eta^{i, n-i+1}(u)=1$ for all $i$, we normalize the cometric $g^{ij}$ dividing it by $4(n-1)$. Thus,  using \eqref{g^{11}eq} we have that $g^{11}(u)=n$. 
\end{remark}

A matrix like $\eta^{ij}$ as determined in formula \eqref{eq:eta} is called lower anti-triangular. 
Since the form $\eta$ defined in \eqref{eq:eta} depends polynomially on the $u$s and its determinant is a constant different from zero, we have that
\begin{lemma}
The metric $\eta^{-1}$ depends polynomially on the $u'$s as well. Moreover, $\eta_{ij}$ is also lower anti-triangular. 
\end{lemma}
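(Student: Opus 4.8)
The plan is to derive both assertions purely from the structural information about $\eta^{ij}(u)$ established in Proposition \ref{prop:eta}: its entries are polynomial (in fact linear) in $u_1,\dots,u_n$, and the matrix $(\eta^{ij})$ is anti-triangular with non-vanishing anti-diagonal. Nothing geometric is needed; the lemma is a linear-algebraic statement about the inverse of such a matrix.

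Let $J$ be the exchange matrix $J_{ab}=\delta_{a+b,\,n+1}$, so $J^2=\mathrm{Id}$, and write $A:=(\eta^{ij})$. First I would note that $\det A$ is a non-zero constant: since $\eta^{ij}=0$ whenever $i+j\le n$, the product $JA$ is triangular, with diagonal entries $\eta^{n+1-i,\,i}$ which all equal $1$ after the normalization of Remark \ref{remark:g11}; hence $\det A=\det J=(-1)^{\lfloor n/2\rfloor}$. Polynomiality of the inverse is then immediate from Cramer's rule, $\eta_{ij}=(\det A)^{-1}\bigl(\operatorname{adj}A\bigr)_{ij}$: each entry of $\operatorname{adj}A$ is, up to sign, an $(n-1)\times(n-1)$ minor of $A$, hence a polynomial in the $\eta^{kl}$, and therefore — by \eqref{eq:eta} — a polynomial in $u_1,\dots,u_n$; dividing by the non-zero constant $\det A$ preserves polynomiality.

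For the triangular shape I would invoke that the inverse of an invertible anti-triangular matrix is again anti-triangular: the matrix $JA$ is triangular with invertible (unit) diagonal, so $(JA)^{-1}$ is triangular of the same type, and from $A=J\,(JA)$ one gets $A^{-1}=(JA)^{-1}J$; thus $\eta_{ij}=\bigl((JA)^{-1}\bigr)_{i,\,n+1-j}$, which vanishes off the triangle bounded by the anti-diagonal exactly as the corresponding block of the triangular matrix $(JA)^{-1}$ does, while the anti-diagonal entries $\eta_{i,\,n+1-i}$ coincide with the (non-zero) diagonal entries of $(JA)^{-1}$. This is the claimed anti-triangular structure of $\eta_{ij}$.

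There is no real obstacle here: given Proposition \ref{prop:eta} this is a short exercise, and the only points needing care are the index bookkeeping in the conjugation by $J$ (to identify precisely which block of $\eta_{ij}$ vanishes) and carrying the normalization of Remark \ref{remark:g11} through so that the anti-diagonal entries come out equal to $1$ rather than merely non-zero. One could equally argue by induction on $n$, expanding the determinant and the relevant minors along the last anti-diagonal, but the conjugation argument is cleaner and exhibits the structure of $\eta_{ij}$ directly.
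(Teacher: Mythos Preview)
Your proof is correct and follows essentially the same line as the paper's: both arguments use the constancy of $\det\eta$ to deduce polynomiality of the inverse (you via the adjugate/Cramer formula, the paper via Cayley--Hamilton, an inessential variation), and both establish the anti-triangular shape of $\eta_{ij}$ by factoring $\eta$ as the product of the exchange matrix with a triangular matrix and noting that such a factored form is preserved under inversion. The only cosmetic difference is the side on which you place the exchange matrix ($J\eta$ versus the paper's $\eta=LA$), which for the symmetric $\eta$ at hand is immaterial.
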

\proof
Let $p_{\eta}(\lambda)$ be the characteristic polynomial of the matrix associated to the intersection form. It is a polynomial in $\lambda$ with coefficients that are polynomials in the entries of the intersection form and thus they are polynomials in the $u$s. By Cayley-Hamilton theorem, $p_{\eta}(\eta)=0$ identically, where  by $\eta$ we mean the matrix associated to the intersection form. 
But $p_{\eta}(\eta)=\eta^n+c_{n-1}\eta^{n-1}+\dots+c_1 \eta+c_0 1$, where $c_0=(-1)^n \det(\eta)$ and $1$ denotes the identity matrix. 
From this we get immediately
$$\eta^{-1}=\frac{(-1)^{n-1}}{\det(\eta)}(\eta^{n-1}+c_{n-1}\eta^{n-2}+\dots+c_1 1),$$
from which it is clear that the entries of $\eta^{-1}$ are polynomials in the $u$s, since $\det(\eta)$ is a constant and all the other terms depend on the $u$s as polynomials. To show that it is also lower anti-triangular, it is enough to observe that every lower anti-triangular matrix can be obtained as a product $LA$ of two matrices, where $L$ is lower triangular and $A$ is the matrix with all ones on the anti-diagonal and zero in the other entries. Furthermore, it is well-known that the inverse of a lower triangular matrix is lower triangular while the inverse of $A$ coincides with $A.$ This immediately shows that $\eta^{-1}$ is also lower anti-triangular.  
\endproof

\subsection{The pair $(g, \eta)$ is a flat pencil of metrics}

Recall that 
\begin{definition}
A pair of metrics $(g_1,g_2)$ forms a flat pencil if
\begin{itemize}
\item $g=g_1+\lambda g_2$ is a flat metric for all $\lambda$;
\item The Christoffel symbols $\Gamma^{ij}_k$ of the metric $g$ are of the form
\[
\Gamma^{ij}_{k}=\Gamma^{ij}_{1\;k}+\lambda\Gamma^{ij}_{2\;k},\,\forall i,j,k=1,\dots n,\;\forall\lambda.
\]
\end{itemize}
\end{definition}
In this subsection we will show that the pair $(g,\eta)$, where $g$ and $\eta$ are defined in \eqref{eq:c1} and, respectively, in \eqref{eq:eta}, gives rise to a flat pencil of metrics on ${\mathbb{C}^n}/B_n$. Our proof is based on the following result.
\begin{proposition}[Lemma 1.2. in \cite{DCoxeter}]\label{pro:DubCox}
If for a flat metric $g$ on some coordinate system $x=(x_1,\dots,x_n)$ both the components $g^{ij}(x)$ of the metric $g$ and the contravariant components $\Gamma^{ij}_k(x)$ of the associated Levi-Civita connection depend at most linearly on the variable $x_1$, then $g_1:=g$ and $g_2$ defined by
\[
g_2^{ij}(x):=\partial_{x_1}g^{ij}(x),\,\forall i,j,
\]
form a flat pencil if $\det (g^{ij}_2(x))\neq 0$. The contravariant components of the corresponding Levi-Civita connections are
\[
\Gamma^{ij}_{1\;k}(x):=\Gamma^{ij}_k(x)\quad\text{and}\quad \Gamma^{ij}_{2\;k}(x):=\partial_{x_1}\Gamma^{ij}_{k}(x),\;\forall i,j,k.
\]
\end{proposition}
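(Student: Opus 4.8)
The plan is to reduce the whole statement to the elementary fact that, under the stated linearity hypotheses, the pencil $g_\lambda:=g_1+\lambda g_2$ coincides with the pull-back of $g$ along the translation $T_\lambda\colon (x_1,x_2,\dots,x_n)\mapsto(x_1+\lambda,x_2,\dots,x_n)$. First I would observe that, since each $g^{ij}(x)$ is a polynomial of degree at most one in $x_1$, its Taylor expansion in $x_1$ stops at first order, so
\[
g^{ij}(x_1+\lambda,x_2,\dots,x_n)=g^{ij}(x)+\lambda\,\partial_{x_1}g^{ij}(x)=g_1^{ij}(x)+\lambda\,g_2^{ij}(x)=g_\lambda^{ij}(x),
\]
and the identical truncation applied to the contravariant Christoffel symbols (also assumed at most linear in $x_1$) gives $\Gamma^{ij}_k(x_1+\lambda,x_2,\dots,x_n)=\Gamma^{ij}_k(x)+\lambda\,\partial_{x_1}\Gamma^{ij}_k(x)$.

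Next I would exploit that $T_\lambda$ is a diffeomorphism of (a suitable open subset of) the coordinate domain whose Jacobian is the identity matrix. Flatness being a diffeomorphism invariant, $g_\lambda=T_\lambda^{*}g$ is flat for every $\lambda$; moreover $\det g_\lambda(x)=\det g(T_\lambda(x))\neq0$ wherever $g$ is non-degenerate, so $g_\lambda$ is a genuine metric on the relevant locus. For the connections, I would use the naturality of the Levi-Civita connection: the (covariant, hence also contravariant) Christoffel symbols of $T_\lambda^{*}g$ at $x$ equal those of $g$ evaluated at $T_\lambda(x)$, the usual inhomogeneous term dropping out precisely because the second derivatives of $T_\lambda$ vanish (the Jacobian is constant). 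Combining this with the truncation of the previous paragraph, the contravariant Christoffel symbols of $g_\lambda$ are $\Gamma^{ij}_k(x)+\lambda\,\partial_{x_1}\Gamma^{ij}_k(x)$, i.e. $\Gamma^{ij}_{1\,k}+\lambda\,\Gamma^{ij}_{2\,k}$ with $\Gamma^{ij}_{1\,k}=\Gamma^{ij}_k$ and $\Gamma^{ij}_{2\,k}=\partial_{x_1}\Gamma^{ij}_k$. Together with the observation that $g_1=g$ and $g_2$ are non-degenerate (the latter by the hypothesis $\det(g_2^{ij})\neq0$), this verifies every clause of the definition of a flat pencil of metrics and simultaneously identifies the two associated Levi-Civita connections.

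Since the argument is essentially formal, there is no substantial obstacle; the only point requiring a little care is the computation in the second paragraph, namely checking that the translation contributes no affine correction to the transformed Christoffel symbols, which is exactly where the identity Jacobian of $T_\lambda$ is used. As a fallback not relying on the translation trick, one could instead substitute $g_\lambda$ directly into the formula for the contravariant Christoffel symbols, check by inspection that they are affine in $\lambda$ with the stated coefficients, and then verify the vanishing of the Riemann tensor of $g_\lambda$ for all $\lambda$ by a direct computation; in that route the Riemann-tensor cancellation is the main obstacle, which is precisely what the pull-back argument circumvents.
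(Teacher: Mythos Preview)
Your argument is correct: the translation $T_\lambda$ has identity Jacobian, so $g_\lambda^{ij}(x)=g^{ij}(T_\lambda x)$ and $\Gamma^{ij}_{\lambda\,k}(x)=\Gamma^{ij}_k(T_\lambda x)$ are literally the components of $g$ and of its Levi-Civita connection in translated coordinates, whence flatness and the affine-in-$\lambda$ form of the Christoffel symbols follow at once. The only point to phrase carefully is whether one is pulling back or pushing forward a $(2,0)$-tensor, but since $T_\lambda$ is a diffeomorphism with constant Jacobian this is harmless, as you note.

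As for comparison with the paper: the paper does \emph{not} supply its own proof of this proposition. It is stated as Lemma~1.2 of Dubrovin's Coxeter-group paper and simply cited. Your translation argument is in fact exactly Dubrovin's original proof of that lemma, so there is no alternative approach to contrast with.
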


As a system of coordinates on ${\mathbb{C}^n}/B_n$ we choose the set of basic invariants $(u_1,\dots,u_n)$, see \eqref{eq:defu}. Under this assumption, Lemma \ref{cor:invg} entails that the metric defined in \eqref{eq:c1} descends to a metric on the quotient space having the properties required in the Proposition \ref{pro:DubCox}, where the role of $x_1$ is played by $u_{n-1}$. To conclude the proof, we are left to prove that the contravariant components $\Gamma^{ij}_k(u)$ of the Levi-Civita connection defined by $g$ satisfy the conditions stated in Proposition \ref{pro:DubCox}. More precisely we will prove that
\begin{proposition}\label{pro:contra}
The contravariant component of the Levi-Civita connection defined by $g$ are polynomial functions of $(u_1,\dots,u_n)$ which depend at most linearly on $u_{n-1}$.
\end{proposition}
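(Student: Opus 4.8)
The plan is to transport the computation to the flat coordinates for $g$ exhibited in the proof of Proposition~\ref{prop:gflatness}: in the coordinates $q_a:=p_a^2$ the cometric has constant components $g^{ab}(q)=4(1-\delta^{ab})$, so all Christoffel symbols of its Levi-Civita connection vanish, while by \eqref{eq:defu} the chosen coordinates on $\mathbb{C}^n/B_n$ are the elementary symmetric functions $u_i=f_i(q_1,\dots,q_n)$; hence $q\mapsto u$ is a \emph{polynomial} change of variables, a local diffeomorphism off the discriminant $\{\prod_{a<b}(q_a-q_b)=0\}$. Since the Christoffel symbols vanish in the $q$-chart, the transformation rule for the contravariant components of the Levi-Civita connection collapses to the inhomogeneous (second-derivative) term
\[
\Gamma^{ij}_k(u)=g^{ab}(q)\,\frac{\partial u_i}{\partial q_a}\,\frac{\partial^2 u_j}{\partial q_b\,\partial q_c}\,\frac{\partial q_c}{\partial u_k},
\]
which a priori is only a rational function of $u$ whose poles can lie only on the discriminant (up to sign, $\det(\partial u/\partial q)$ is the Vandermonde determinant $\prod_{a<b}(q_a-q_b)$). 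What I need to upgrade this to is: polynomial in $u$, and affine-linear in $u_{n-1}$.

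Following the proof of Proposition~\ref{prop:eta}, I would package all the $\Gamma^{ij}_k(u)$ into the generating function $\mathcal{G}_k(x,y):=\sum_{i,j=1}^n\Gamma^{ij}_k(u)\,x^{n-i}y^{n-j}$. Two elementary identities suffice: first, $\sum_i\frac{\partial u_i}{\partial q_a}x^{n-i}=\partial_{q_a}h(x)=\frac{h(x)}{x+q_a}$, with $h(x)=\prod_l(x+q_l)$ as in \eqref{eq:hdef}; second, writing $\partial_{u_k}$ for the derivation $\sum_c\frac{\partial q_c}{\partial u_k}\partial_{q_c}$, one has $\partial_{u_k}h(y)=y^{n-k}$ (read off $h(y)=\sum_m u_m y^{n-m}$) and $\partial_{u_k}q_c=\frac{(-q_c)^{n-k}}{h'(-q_c)}$, the latter equivalent to the partial-fraction expansion $\sum_c\frac{\partial q_c}{\partial u_k}\frac{1}{y+q_c}=\frac{y^{n-k}}{h(y)}$. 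With these, $\mathcal{G}_k(x,y)=4\sum_{a\neq b}\frac{h(x)}{x+q_a}\,\partial_{u_k}\!\Big(\frac{h(y)}{y+q_b}\Big)$, which I would split into the difference of the product of single sums and the diagonal ($a=b$) contribution. The former collapses, since $\sum_a\frac{h(x)}{x+q_a}=h'(x)$ and $\sum_b\partial_{u_k}\!\big(\frac{h(y)}{y+q_b}\big)=\partial_{u_k}h'(y)=(n-k)y^{n-k-1}$, to the manifestly polynomial $4(n-k)h'(x)y^{n-k-1}$; in the diagonal term $\sum_a\frac{h(x)}{x+q_a}\partial_{u_k}\!\big(\frac{h(y)}{y+q_a}\big)$ I would use the logarithmic-derivative identity $\sum_a\frac{1}{x+q_a}=\frac{h'(x)}{h(x)}$ together with the partial fractions of $\frac{1}{(x+q_a)(y+q_a)}$ and $\frac{1}{(x+q_a)(y+q_a)^2}$, after which all $h'(y)/h(y)$-type factors cancel and only denominators $x-y$ and $(x-y)^2$ survive.

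The crux is to show that this remainder is nevertheless polynomial. The computation should yield
\[
\mathcal{G}_k(x,y)=4(n-k)h'(x)y^{n-k-1}+\frac{4\,M(x,y)}{(x-y)^2},\quad M(x,y)=(x-y)\big(y^{n-k}h'(x)-(n-k)y^{n-k-1}h(x)\big)+x^{n-k}h(y)-y^{n-k}h(x),
\]
after which one checks $M(y,y)=0$ and $(\partial_x M)(y,y)=0$ — both immediate from the expressions for $h(y),h'(y)$ — forcing $(x-y)^2\mid M$, so that $M/(x-y)^2$ is a genuine polynomial in $x,y$. Since $h(x)=\sum_{m=0}^n u_m x^{n-m}$ and $h'(x)$ are polynomials in $u_1,\dots,u_n$, equating coefficients of $x^{n-i}y^{n-j}$ in the two forms of $\mathcal{G}_k$ then exhibits each $\Gamma^{ij}_k(u)$ as a polynomial in the $u$'s. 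For the dependence on $u_{n-1}$: $h(x)=x^n+u_1x^{n-1}+\dots+u_{n-1}x+u_n$ contains $u_{n-1}$ only through the affine-linear term $u_{n-1}x$, so $h(x)$, $h'(x)$ and hence $M$ are affine-linear in $u_{n-1}$, and dividing by the $u$-independent factor $(x-y)^2$ preserves this; thus $\mathcal{G}_k$, and with it every $\Gamma^{ij}_k(u)$, is at most linear in $u_{n-1}$. The identity, established first off the discriminant, then holds identically because both sides are polynomial.

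The step I expect to be the real labor is the second and third paragraphs: carrying out the partial-fraction bookkeeping that produces the closed form for $\mathcal{G}_k$ — keeping careful track of which summands carry $1/(x-y)$ versus $1/(x-y)^2$ — and verifying the order-two vanishing of $M$ along $x=y$. Once the closed form is in hand, both polynomiality in the $u$'s and the at-most-linear dependence on $u_{n-1}$ fall out from its shape; as a consistency check one can recover the relation $\Gamma^{ij}_k(u)+\Gamma^{ji}_k(u)=\partial_{u_k}g^{ij}(u)$ by combining $\mathcal{G}_k$ with the generating function of Proposition~\ref{prop:eta}, and one can verify the whole scheme at once on $B_2$, where it gives $\mathcal{G}_1(x,y)=4x$ and $\mathcal{G}_2(x,y)=4$.
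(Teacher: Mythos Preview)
Your approach is correct and genuinely different from the paper's. The paper splits the proof into two lemmata: first, working in the $p$-coordinates with the explicit Christoffel symbols $\Gamma^i_{ii}(p)=1/p^i$ from Lemma~\ref{Lemma:ChrisSymbs}, it shows that $\Gamma^{rs}_b(u)\,du^b$ is a $B_n$-invariant $1$-form with \emph{polynomial} coefficients in $p$, and then invokes Solomon's theorem on invariant differential forms to conclude polynomiality in $u$; second, a pure degree count gives $\deg\Gamma^{si}_k(u)=2s+2i-2k-4\le 4n-6<2\deg(u_{n-1})$, whence the at-most-linear dependence on $u_{n-1}$. Your route instead passes to the flat chart $q_a=p_a^2$ (where all Christoffel symbols vanish), packages the contravariant symbols into a generating function in the style of Proposition~\ref{prop:eta}, and extracts both polynomiality in $u$ and affine-linearity in $u_{n-1}$ directly from the closed form of $\mathcal G_k$. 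This is more elementary in that it avoids Solomon's theorem entirely and in fact yields the stronger statement that every $\Gamma^{ij}_k(u)$ is affine-linear in \emph{each} $u_m$ (since $M$ is linear in $h,h'$), at the cost of the partial-fraction bookkeeping you flag. Your claimed form of $M$ checks out: $M(y,y)=0$ is immediate, and $\partial_xM|_{x=y}=y^{n-k}h'(y)-(n-k)y^{n-k-1}h(y)+(n-k)y^{n-k-1}h(y)-y^{n-k}h'(y)=0$ once the contribution $-y^{n-k}h'(x)$ from differentiating the last term is included; moreover the symmetrization $\mathcal G_k(x,y)+\mathcal G_k(y,x)$ reproduces $\partial_{u_k}$ of the generating function in Proposition~\ref{prop:eta}, and the $B_2$ values $\mathcal G_1=4x$, $\mathcal G_2=4$ agree. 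The paper's argument is shorter and more conceptual; yours is more explicit and self-contained.
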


We split the proof of this proposition in two lemmata.

\begin{lemma}\label{poly1.lemma}
In the coordinates $(u_1,\dots,u_n)$ the contravariant components of the Levi-Civita connections defined by $g$ are polynomial functions of $(u_1,\dots,u_n)$.  
\end{lemma}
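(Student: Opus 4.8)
The plan is to compute the contravariant Christoffel symbols in a coordinate system where $g$ is \emph{constant}, and then transfer the result to the invariant coordinates $(u_1,\dots,u_n)$ using the theory of invariant differential forms of a reflection group. First I would pass to the coordinates $q_i:=p_i^2$. As already noted in the proof of Proposition~\ref{prop:gflatness}, in these coordinates the cometric \eqref{eq:c1} becomes a constant non-degenerate matrix, so the $q_i$ are flat coordinates for $g$ and all the Christoffel symbols of its Levi-Civita connection $\nabla$ vanish identically. Moreover, by \eqref{eq:defu}, each basic invariant $u_i$ is the $i$-th elementary symmetric polynomial in $q_1,\dots,q_n$; hence the change of coordinates $q\mapsto u$ is polynomial, and it is a local diffeomorphism on the dense open set where the $q_a$ are pairwise distinct (its Jacobian there is, up to sign, the Vandermonde determinant $\prod_{a<b}(q_a-q_b)$).

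Next, writing the contravariant components $\Gamma^{ij}_k=-g^{is}\Gamma^j_{sk}$ in the $u$-coordinates and using that the ordinary Christoffel symbols vanish in the $q$-coordinates, the transformation rule of a linear connection gives, after a routine manipulation with the chain rule,
\[
\Gamma^{ij}_k(u)=\sum_{a,b,c=1}^n g^{ab}(q)\,\frac{\partial u_i}{\partial q_a}\,\frac{\partial^2 u_j}{\partial q_b\partial q_c}\,\frac{\partial q_c}{\partial u_k}=\omega^{ij}\!\left(\frac{\partial}{\partial u_k}\right),
\]
where $\omega^{ij}:=\sum_{c}\bigl(\sum_{a,b}g^{ab}(q)\,\tfrac{\partial u_i}{\partial q_a}\,\tfrac{\partial^2 u_j}{\partial q_b\partial q_c}\bigr)\,dq_c$ is the $1$-form obtained by raising one index of the Hessian $\nabla\,du_j$ with $g$ and contracting with $du_i$. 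Since $g^{ab}(q)$ is constant and the $u_i$ are polynomials, $\omega^{ij}$ has polynomial coefficients in $q$; and since $\omega^{ij}$ is built intrinsically from the $B_n$-invariant data $g$, $\nabla$, $u_i$, $u_j$ (Proposition~\ref{Lemma:inv}, Proposition~\ref{prop:gflatness}), it is an invariant $1$-form. Here it is convenient to observe that $B_n$ acts on the $q$-coordinates as the symmetric group $S_n$ permuting the $q_a$, since the reflections $p_a\mapsto-p_a$ act trivially on $q_a=p_a^2$.

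Finally, $S_n$ acts on $\mathbb{C}^n$ (with coordinates $q$) as a finite reflection group whose basic invariants are $u_1,\dots,u_n$, so by Solomon's theorem on invariant differential forms the module of $S_n$-invariant polynomial $1$-forms is free over $\mathbb{C}[u_1,\dots,u_n]$ with basis $du_1,\dots,du_n$. Writing $\omega^{ij}=\sum_k W^{ij}_k(u)\,du_k$ with $W^{ij}_k\in\mathbb{C}[u_1,\dots,u_n]$ and contracting with $\partial/\partial u_k$ yields $\Gamma^{ij}_k(u)=W^{ij}_k(u)$, which proves the lemma. If one prefers to avoid quoting Solomon's theorem, one can argue directly: on the locus where the $q_a$ are distinct $\omega^{ij}=\sum_k h^{ij}_k\,du_k$ with $h^{ij}_k$ rational and with poles possibly only along the hyperplanes $q_a=q_b$; an order-of-vanishing check along each such hyperplane (using the $S_n$-invariance of $\omega^{ij}$), or an explicit generating-function computation in the spirit of the proof of Proposition~\ref{prop:eta} and of \cite{SYS}, shows that these poles do not occur, so $h^{ij}_k$ is polynomial and, being invariant, lies in $\mathbb{C}[u]$.

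The one genuinely non-formal point is exactly this last regularity statement. Although $\omega^{ij}$ is manifestly regular, the coordinate vector fields $\partial/\partial u_k$ blow up along the reflection loci $q_a=q_b$ (their $q$-components are $\partial q_c/\partial u_k\sim 1/\prod_{a<b}(q_a-q_b)$ times polynomials), so a priori $\Gamma^{ij}_k(u)=\omega^{ij}(\partial_{u_k})$ could acquire poles there; ruling them out is precisely where the reflection-group structure of $B_n$ (equivalently, of the $S_n$-action on the $q$-space) is used, exactly as in the Coxeter case of Saito--Yano--Sekiguchi. Everything else is bookkeeping with the chain rule, and the companion statement that the $\Gamma^{ij}_k(u)$ depend at most linearly on $u_{n-1}$ is then a separate degree count, carried out in the next lemma.
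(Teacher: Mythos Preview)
Your argument is correct and follows the same architecture as the paper's proof: write $\Gamma^{ij}_k(u)\,du^k$ as a $1$-form with polynomial coefficients, observe that it is invariant under the reflection group, and then invoke Solomon's theorem \cite{solmon} to conclude that the coefficients with respect to $du^1,\dots,du^n$ lie in $\mathbb{C}[u_1,\dots,u_n]$.

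The one difference is the choice of base coordinates. The paper stays in the $p$-coordinates, where the Christoffel symbols are $\Gamma^i_{ii}(p)=1/p^i$ (Lemma~\ref{Lemma:ChrisSymbs}); this produces an extra term $\frac{\partial u^f}{\partial p^k}\frac{\partial u^d}{\partial p^l}\Gamma^{kl}_j(p)\,dp^j$ in the transformation formula and forces an explicit check that all the $1/(p^ip^k)$ denominators cancel against the partial derivatives of the invariants before Solomon can be applied. You instead pass to $q_i=p_i^2$, where $g^{ab}(q)=4(1-\delta^{ab})$ is constant and the Christoffel symbols vanish, so the $1$-form $\omega^{ij}=\sum_{a,b,c}g^{ab}(q)\,\partial_{q_a}u_i\,\partial_{q_b}\partial_{q_c}u_j\,dq_c$ has visibly polynomial coefficients with no cancellation needed; the price is that you must note that $B_n$ acts on $q$-space through $S_n$ and apply Solomon for $S_n$ rather than for $B_n$. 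Your tensorial description of $\omega^{ij}$ as $\iota_{g^\sharp du_i}(\nabla du_j)$ makes the invariance transparent and is a genuine streamlining of the paper's computation; conversely, the paper's explicit formula \eqref{eq:solmon} in $p$-coordinates has the advantage of being directly comparable to the expressions used elsewhere (e.g.\ in Proposition~\ref{prop:eta}).
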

\proof In the following, unless differently stated, we will sum over repeated indexes. If $\Gamma^i_{jk}(p)$ are the Christoffel symbols in the $p$-variables and $\Gamma^i_{jk}(u)$ those in the $u$-variables, one has
\begin{equation}\label{trans1eq}
\Gamma^l_{ij}(p)=\frac{\partial p^l}{\partial u^c}\frac{\partial^2 u^c}{\partial p^i\partial p^j}+\frac{\partial p^l}{\partial u^c}\frac{\partial u^a}{\partial p^i}\frac{\partial u^b}{\partial p^j}\Gamma^c_{ab}(u).
\end{equation}

Multiplying both sides of \eqref{trans1eq} by $g^{ki}(p)\frac{\partial u^f}{\partial p^k}\frac{\partial u^d}{\partial p^l}dp^j$, we obtain 
\begin{align*}
g^{ki}(p)\frac{\partial u^f}{\partial p^k}\frac{\partial u^d}{\partial p^l}\Gamma^l_{ij}(p)dp^j&=g^{ki}(p)\frac{\partial u^f}{\partial p^k}\frac{\partial u^d}{\partial p^l}\frac{\partial p^l}{\partial u^c}\frac{\partial^2 u^c}{\partial p^i\partial p^j}dp^j\\
&+g^{ki}(p)\frac{\partial u^f}{\partial p^k}\frac{\partial u^d}{\partial p^l}\frac{\partial p^l}{\partial u^c}\frac{\partial u^a}{\partial p^i}\frac{\partial u^b}{\partial p^j}\Gamma^c_{ab}(u)dp^j.
\end{align*}
Now observe that in the two terms of the right-hand side of the above expression $\frac{\partial u^d}{\partial p^l}\frac{\partial p^l}{\partial u^c}=\delta^d_c$, so it simplifies to:
\[
g^{ki}(p)\frac{\partial u^f}{\partial p^k}\frac{\partial u^d}{\partial p^l}\Gamma^l_{ij}(p)dp^j=g^{ki}(p)\frac{\partial u^f}{\partial p^k}\frac{\partial^2 u^d}{\partial p^i\partial p^j}dp^j+g^{ki}(p)\frac{\partial u^f}{\partial p^k}\frac{\partial u^a}{\partial p^i}\Gamma^d_{ab}(u)du^b.
\]

Using the definition of Christoffel symbols with two upper indices we get:
\[-\frac{\partial u^f}{\partial p^k}\frac{\partial u^d}{\partial p^l}\Gamma^{kl}_j(p)dp^j=g^{ki}(p)\frac{\partial u^f}{\partial p^k}\frac{\partial^2 u^d}{\partial p^i\partial p^j}dp^j-\Gamma^{fd}_{b}(u) du^b,
\]
where we have used the fact that $g^{ki}(p)\frac{\partial u^f}{\partial p^k}\frac{\partial u^a}{\partial p^i}$ is the cometric written in the $u$-variables. We thus obtain 
\begin{equation}\label{trans2.eq}
\Gamma^{fd}_b(u) du^b =g^{ki}(p)\frac{\partial u^f}{\partial p^k}\frac{\partial^2 u^d}{\partial p^i \partial p^j}dp^j+\frac{\partial u^f}{\partial p^k}\frac{\partial u^d}{\partial p^l}\Gamma^{kl}_j(p)dp^j.
\end{equation}
Introducing the contravariant Christoffel symbols $\Gamma^{ik}_l(p)=-g^{im}(p)\Gamma^k_{ml}(p)$,
 from \eqref{c3} one obtains 
\[
\Gamma^{ik}_l(p)\stackrel{\eqref{eq:c3}}{=}-\frac{(1-\delta^{im})}{p^ip^kp^m}\delta_{km}\delta_{kl}=\frac{(\delta^{ki}-1)\delta_{kl}}{p^i (p^k)^2},
\]
which, inserted in \eqref{trans2.eq}, yields
\begin{equation}\label{trans3.eq}
\Gamma^{fd}_b(u)du^b=\frac{(1-\delta^{ki})}{p^i p^k}\frac{\partial u^f}{\partial p^k}\frac{\partial^2 u^d}{\partial p^i \partial p^j}dp^j+\frac{\partial u^f}{\partial p^k}\frac{\partial u^d}{\partial p^l}\frac{(\delta^{kl}-1)\delta_{lj}}{p^k (p^l)^2}dp^j.
\end{equation}
Expanding and analyzing the right-hand side of \eqref{trans3.eq},
we obtain: 
\[ \sum_{k,i,j, k\neq i} \frac{1}{p^ip^k}\frac{\partial u^f}{\partial p^k}\frac{\partial^2 u^d}{\partial p^i \partial p^j}dp^j-\sum_{k,l,j,k\neq l}\frac{\partial u^f}{\partial p^k}\frac{\partial u^d}{\partial p^l}\frac{\delta_{lj}}{p^k (p^l)^2}dp^j=
\]
\[=\sum_{k,j,k\neq j}\frac{1}{p^jp^k}\frac{\partial u^f}{\partial p^k}\frac{\partial^2 u^d}{(\partial p^j)^2}dp^j+\sum_{k,i,j,k\neq i, j \neq i}\frac{1}{p^ip^k}\frac{\partial u^f}{\partial p^k}\frac{\partial^2 u^d}{\partial p^i \partial p^j}dp^j-\sum_{k,j,k\neq j}\frac{\partial u^f}{\partial p^k}\frac{\partial u^d}{\partial p^j} \frac{1}{p^k (p^j)^2} dp^j,
\]
which can be written as:
\[ \sum_{k,j,k\neq j}\frac{1}{p^j p^k}\frac{\partial u^f }{\partial p^k}\left [  \frac{\partial^2 u^d}{(\partial p^j)^2}-\frac{\partial u^d}{\partial p^j}\frac{1}{p^j} \right]dp^j +\sum_{k,i,j,k\neq i, j\neq i }\frac{1}{p^i p^k} \frac{\partial u^f}{\partial p^k}\frac{\partial^2 u^d}{\partial p^i \partial p^j} dp^j.\]
Taking into account that
\[
u^k=\sum_{1\leq i_1<\cdots <i_k\leq n} (p^{i_1}\cdots p^{i_k})^2,
\]
it is immediate to check that first term above vanishes identically, since $u^1, \dots, u^n$ are polynomials of degree $1$ in each of the $(p^i)^2$, and the second term does not contain any denominator, since they are simplified (unless $d=1$ in which case the second term is identically zero). The previous (long) discussion is summarized in the following formula
\begin{equation}
\Gamma^{rs}_{b}(u)du^b=\sum_{k,i,j,k\neq i, j\neq i }\frac{1}{p^i p^k} \frac{\partial u^r}{\partial p^k}\frac{\partial^2 u^s}{\partial p^i \partial p^j} dp^j,\;\forall r,s=1,\dots,n,\label{eq:solmon}
\end{equation}
whose right-hand side is a $1$-form with polynomial coefficients in the $p$-variables.

To conclude we can argue as follows. Since the left-hand side of \eqref{eq:solmon} is $B_n$-invariant, the right-hand side is so. Since the latter is a $1$-form with polynomial coefficients, the coefficients of the left-hand side are necessarily polynomial functions in $(u_1,\dots,u_n)$, see \cite[Theorem page 3]{solmon}.
\endproof

\begin{remark}
The previous argument is the same used in the proof Lemma 2.1 in \cite{DCoxeter}. However, while it is evident that the left-hand side of Formula (2.8) in \cite{DCoxeter} is a $1$-form with polynomial coefficients, the polynomiality of the coefficients of the right-hand side of \eqref{eq:solmon} was not so and it needed to be shown.
\end{remark}

To complete the proof of Proposition \ref{pro:contra}, we are left to show that the contravariant components of the Levi-Civita connection of $g$ depend at most linearly on $u_{n-1}$. This result follows from the following  

\begin{lemma}\label{lemma6.13}
For every choice of $s,i,k=1,\dots,n$,
\begin{equation}
\text{deg}(\Gamma^{si}_k(u))<4n-4.\label{eq:disgam}
\end{equation}
\end{lemma}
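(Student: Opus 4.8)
The plan is to compute the $p$-degree of each of the polynomials $\Gamma^{si}_k(u)$ exactly and observe that it never reaches $4n-4$. By Lemma \ref{poly1.lemma} every $\Gamma^{si}_k(u)$ is a genuine polynomial in $u_1,\dots,u_n$, so, since $\deg u_l=2l$, it has a well-defined degree as a function of the $p$-variables, equal to its weight once one sets $\mathrm{wt}(u_l)=2l$. Thus it suffices to prove that $\Gamma^{si}_k(u)$ is weighted-homogeneous of weight $2(s+i-k)-4$; because $s,i\le n$ and $k\ge1$, this number is at most $4n-6$, hence strictly less than $4n-4$ — which is exactly twice $\deg u_{n-1}$, so the estimate \eqref{eq:disgam} is precisely what will guarantee the at-most-linear dependence on $u_{n-1}$ required by Propositions \ref{pro:DubCox} and \ref{pro:contra}.

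To establish the homogeneity I would reuse the identity \eqref{eq:solmon} obtained inside the proof of Lemma \ref{poly1.lemma}. Reading its free upper indices as $s,i$ and its lower index as $k$, it says that $\Gamma^{si}_b(u)\,du^b$ (sum over the lower index) equals a $1$-form whose coefficient of $dp^j$ is a polynomial in $p$; since each $u_l$ is homogeneous of degree $2l$ in $p$, that coefficient is homogeneous of degree $-2+(2s-1)+(2i-2)=2(s+i)-5$. On the left-hand side $du^b=(\partial u^b/\partial p^j)\,dp^j$ has coefficients of degree $2b-1$. First I would contract both sides with the frame $\partial/\partial u^b$ dual to the coframe $du^1,\dots,du^n$ — legitimate on the dense open locus where the $u$'s are coordinates, i.e.\ off the discriminant — which isolates $\Gamma^{si}_b(u)$ on the left; and in the $p$-variables $\partial/\partial u^b=(\partial p^j/\partial u^b)\,\partial/\partial p^j$ has components of degree $1-2b$, these being the entries of the inverse Jacobian. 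Hence $\Gamma^{si}_b(u)$, a priori only a rational function, is homogeneous in $p$ of degree $(2(s+i)-5)+(1-2b)=2(s+i-b)-4$; since by Lemma \ref{poly1.lemma} it is in fact a polynomial in $u$, it must be weighted-homogeneous of exactly this weight (and identically zero when this number is negative). Setting $b=k$ gives \eqref{eq:disgam}. A slightly cleaner variant that avoids the inverse Jacobian: split $\Gamma^{si}_b(u)$ into weighted-homogeneous parts $\Gamma^{si}_{b,w}(u)$, note that $\Gamma^{si}_{b,w}(u)\,du^b$ has $p$-degree $w+2b-1$, and use the linear independence of $du^1,\dots,du^n$ over functions on the quotient to force, in \eqref{eq:solmon}, every part with $w+2b-1\neq 2(s+i)-5$ to vanish.

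The hard part here is not any computation — it is the (small) structural point in the last step: extracting the weight of a \emph{single} component $\Gamma^{si}_k(u)$ from the homogeneous $1$-form identity \eqref{eq:solmon}, which relies on the algebraic independence of $u_1,\dots,u_n$ (Chevalley). Granting that, $\deg\Gamma^{si}_k(u)=2(s+i-k)-4\le 4n-6<4n-4$ is immediate. As an independent check of the value $2(s+i-k)-4$ one can argue intrinsically: under the scaling $S_\lambda\colon p\mapsto\lambda p$ the cometric \eqref{eq:c1} obeys $S_\lambda^{*}g=\lambda^{-4}g$, so its Levi-Civita connection is $S_\lambda$-invariant; since in the coordinates $(u_1,\dots,u_n)$ the map $S_\lambda$ is the \emph{linear} transformation $u_l\mapsto\lambda^{2l}u_l$, the covariant Christoffel symbols of $g$ transform as a $(1,2)$-tensor under it, and combining the resulting weight with the weight $2(s+i)-4$ of $g^{si}(u)$ from Lemma \ref{cor:invg} assigns to $\Gamma^{si}_k(u)=-g^{sl}\Gamma^{i}_{lk}$ exactly the weight $2(s+i-k)-4$.
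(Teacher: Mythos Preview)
Your proof is correct and reaches the same exact formula $\deg(\Gamma^{si}_k(u))=2(s+i-k)-4\le 4n-6<4n-4$ as the paper, by the same homogeneity-counting idea. The technical route differs slightly: the paper first determines $\deg(\Gamma^{c}_{ab}(u))=\deg(u^c)-\deg(u^a)-\deg(u^b)$ from the transformation law \eqref{trans1eq} for the covariant Christoffel symbols together with the explicit form \eqref{c3} of $\Gamma^k_{ij}(p)$, and then raises an index via $\Gamma^{si}_k=-g^{sj}\Gamma^{i}_{jk}$ using the already-known degree of $g^{sj}(u)$. You instead read off the $p$-degree of the contravariant $1$-form $\Gamma^{si}_b(u)\,du^b$ directly from \eqref{eq:solmon} and then isolate the single component $\Gamma^{si}_k$ either by contracting with $\partial/\partial u^k$ or by using the linear independence of $du^1,\dots,du^n$ over functions on the quotient (Chevalley). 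Both routes are short; yours has the small advantage of recycling \eqref{eq:solmon} verbatim and of making the role of algebraic independence explicit, while the paper's avoids inverse Jacobians and any discussion of extracting a single component. Your scaling-invariance sanity check (the Levi-Civita connection of a conformally homogeneous metric is invariant under the linear action $u_l\mapsto\lambda^{2l}u_l$) is a nice independent confirmation of the weight.
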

\proof
First we will show that for every choice of the indices 
\begin{equation}
\text{deg}(\Gamma^{c}_{ab}(u))=\text{deg}(u^c)-\text{deg}(u^a)-\text{deg}(u^b).\label{eq:inGamma}
\end{equation}
To this end, we start noticing that if not all the indices in the left-hand side of \eqref{trans1eq} are equal, \eqref{c3} implies
\[
\frac{\partial p^l}{\partial u^c}\frac{\partial^2 u^c}{\partial p^i\partial p^j}+\frac{\partial p^l}{\partial u^c}\frac{\partial u^a}{\partial p^i}\frac{\partial u^b}{\partial p^j}\Gamma^c_{ab}(u)=0,
\]
which yields
\[
\frac{\partial^2 u^c}{\partial p^i\partial p^j}+\frac{\partial u^a}{\partial p^i}\frac{\partial u^b}{\partial p^j}\Gamma^c_{ab}(u)=0.
\]

This identity, together with the definition of the invariants $u^1,\dots,u^n$, implies that $\Gamma^{c}_{ab}(u)$ is a homogeneous polynomial of degree
\[
\text{deg}(\Gamma^{c}_{ab}(u))=\text{deg}(u^c)-\text{deg}(u^a)-\text{deg}(u^b).
\]
On the other hand, if in \eqref{trans1eq} $i=j=l$, \eqref{c3} entails

\[
\frac{\partial u^c}{\partial p^i}\frac{1}{p^i}=\frac{\partial^2 u^c}{\partial^2 p^i}+\frac{\partial u^a}{\partial p^i}\frac{\partial u^b}{\partial p^i}\Gamma^c_{ab}(u),
\]
which implies that
\[
\text{deg}(u^c)-2=\text{deg}(\Gamma^{c}_{ab}(u))+\text{deg}(u^a)+\text{deg}(u^b)-2,
\]
or, equivalently, that
\[
\text{deg}(\Gamma^{c}_{ab}(u))=\text{deg}(u^c)-\text{deg}(u^a)-\text{deg}(u^b),
\]
proving \eqref{eq:disgam}. 
To conclude the proof of the Lemma, it suffices to note since $\Gamma_{k}^{si}(u)= -{g}^{sj}(u){\Gamma}_{jk} ^i(u)$ and $\text{deg}(u^i)=2i$, for all $i=1,\dots,n$,
\begin{align*}
\text{deg}\big(\Gamma_{k}^{si}(u)\big)&=\text{deg}\big({g}^{sj}(u)\big) + \text{deg}\big({\Gamma}_{jk}^i(u)\big)\\
&\stackrel{\eqref{eq:degu}}{=}\text{deg}(u^s)+\text{deg}(u^j)-4+\text{deg}(\Gamma^{i}_{jk}(u))\\
&\stackrel{\eqref{eq:inGamma}}{=}\text{deg}(u^s)-4+\text{deg}(u^i)-\text{deg}(u^k)\\
&= 2s +2i -2k -4\leqslant 4n -6 < 4n -4.
\end{align*}
\endproof

\begin{corollary}
Since $\text{deg}(u^{n-1})=2n-2$, it follows from Lemma \ref{lemma6.13} that $\Gamma^{si}_k(u)$, for all $s,k,i=1,\dots, n$, depends at most linearly on $u^{n-1}$. 
\end{corollary}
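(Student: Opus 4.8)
The plan is to derive the statement directly from the two preceding lemmata by a weighted-degree count. By Lemma \ref{poly1.lemma} each contravariant Christoffel symbol $\Gamma^{si}_k(u)$ is a polynomial in $u_1,\dots,u_n$; by the computation in the proof of Lemma \ref{lemma6.13} --- namely \eqref{eq:inGamma} applied to $\Gamma^{si}_k(u)=-g^{sj}(u)\Gamma^i_{jk}(u)$ together with $\deg u_j=2j$ --- this polynomial is moreover \emph{homogeneous} of weighted degree $2s+2i-2k-4$. The decisive numerical fact is that this degree is at most $4n-6$, which is \emph{strictly} less than $2\deg u_{n-1}=4n-4$.

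Granting this, I would argue as follows. Homogeneity forces every monomial occurring in $\Gamma^{si}_k(u)$ to have one and the same weighted degree, namely $2s+2i-2k-4\le 4n-6$. If some such monomial were divisible by $u_{n-1}^{2}$, we could write it as $u_{n-1}^{2}\,M$ with $M$ a monomial in $u_0=1,u_1,\dots,u_n$; since every generator carries non-negative weight, the weighted degree of $u_{n-1}^{2}\,M$ would be at least $2(2n-2)=4n-4>4n-6$, a contradiction. Hence no monomial of $\Gamma^{si}_k(u)$ is divisible by $u_{n-1}^{2}$, that is, $\Gamma^{si}_k(u)$ depends at most linearly on $u_{n-1}$ for every $s,i,k=1,\dots,n$. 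Together with Lemma \ref{cor:invg}, which gives the same property for the entries $g^{ij}(u)$, and with the non-degeneracy of $\eta^{ij}=\partial_{u_{n-1}}g^{ij}$ established in Proposition \ref{prop:eta}, this is precisely what is needed to invoke Proposition \ref{pro:DubCox} with $x_1=u_{n-1}$, thereby completing the proof of Proposition \ref{pro:contra} and concluding that $(g,\eta)$ is a flat pencil of metrics on $\mathbb{C}^n/B_n$.

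I do not expect any genuine obstacle. The one point that deserves attention is that the proof of Lemma \ref{lemma6.13} indeed yields \emph{homogeneity} of $\Gamma^{si}_k(u)$, not merely an upper bound on its total degree --- this is what legitimizes the monomial-by-monomial comparison above --- and this is automatic, since the $u_j$ are homogeneous in the $p$-variables and the coordinate-change identity \eqref{trans1eq} is compatible with the grading. Everything else reduces to the inequality $\tfrac{4n-6}{2n-2}<2$, valid for all $n\ge 2$.
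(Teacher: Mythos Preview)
Your argument is correct and is exactly the reasoning the paper intends: the Corollary is stated without a separate proof because the inequality $\deg(\Gamma^{si}_k(u))\le 4n-6<4n-4=2\deg(u_{n-1})$ from Lemma~\ref{lemma6.13}, combined with the polynomiality from Lemma~\ref{poly1.lemma}, immediately rules out any monomial divisible by $u_{n-1}^2$. The additional remarks you make about invoking Proposition~\ref{pro:DubCox} go beyond what the Corollary itself asserts, but they correctly anticipate the Theorem that follows.
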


Summarizing, we have
\begin{theorem}
The pair $(g, \eta)$ gives rise to a flat pencil of metrics.
\end{theorem}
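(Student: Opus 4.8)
The plan is to invoke Proposition \ref{pro:DubCox} (Dubrovin's Lemma 1.2 from \cite{DCoxeter}), having already assembled all its hypotheses in the preceding results. First I would recall that by Proposition \ref{prop:gflatness} the metric $g$ (in the $p$-coordinates) is flat, and flatness is a coordinate-independent property, so $g^{ij}(u)$ is a flat metric on $\mathbb{C}^n/B_n$ expressed in the basic invariants $(u_1,\dots,u_n)$. Next, Lemma \ref{cor:invg} shows that the components $g^{ij}(u)$ are polynomial in the $u$'s and depend at most linearly on $u_{n-1}$, so the role of the privileged variable ``$x_1$'' in Proposition \ref{pro:DubCox} is played by $u_{n-1}$. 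Proposition \ref{pro:contra} (established via Lemmata \ref{poly1.lemma} and \ref{lemma6.13} together with its Corollary) supplies the remaining condition: the contravariant Christoffel symbols $\Gamma^{ij}_k(u)$ are polynomial in the $u$'s and likewise at most linear in $u_{n-1}$.

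Then I would observe that the non-degeneracy hypothesis $\det(g_2^{ij})\neq 0$ is exactly Proposition \ref{prop:eta}: since $g_2^{ij}(u)=\partial_{u_{n-1}}g^{ij}(u)=\eta^{ij}(u)$ and $\eta^{ij}(u)$ is a lower anti-triangular Hankel matrix with constant nonzero entries $4(n-1)$ along the anti-diagonal $i+j=n+1$, its determinant is a nonzero constant (up to sign, $(4(n-1))^n$ times $\pm 1$), hence everywhere nonzero. With all hypotheses of Proposition \ref{pro:DubCox} verified, the conclusion is immediate: $g_1:=g$ and $g_2:=\eta$ form a flat pencil, meaning $g+\lambda\eta$ is flat for every $\lambda$ and the Christoffel symbols decompose as $\Gamma^{ij}_k=\Gamma^{ij}_{1\,k}+\lambda\,\Gamma^{ij}_{2\,k}$ with $\Gamma^{ij}_{1\,k}(u)=\Gamma^{ij}_k(u)$ and $\Gamma^{ij}_{2\,k}(u)=\partial_{u_{n-1}}\Gamma^{ij}_k(u)$.

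There is essentially no remaining obstacle: the theorem is a bookkeeping assembly of the four main propositions proved in this section. The only point deserving a sentence of care is confirming that the Lie derivative $\eta=\mathcal{L}_{\partial/\partial u_{n-1}}g^{-1}$ as defined in Proposition \ref{prop:eta} agrees with the coordinate derivative $\partial_{u_{n-1}}g^{ij}$ appearing in Proposition \ref{pro:DubCox}; this is true because in the $u$-coordinate system $\partial/\partial u_{n-1}$ is a constant coordinate vector field, so its Lie derivative acts on the contravariant components $g^{ij}(u)$ simply as $\partial_{u_{n-1}}$. I would therefore conclude the proof in a few lines by citing Proposition \ref{prop:gflatness}, Lemma \ref{cor:invg}, Proposition \ref{pro:contra}, and Proposition \ref{prop:eta} in turn, and then applying Proposition \ref{pro:DubCox}.
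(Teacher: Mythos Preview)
Your proposal is correct and follows essentially the same approach as the paper: both assemble Proposition~\ref{prop:gflatness} (flatness of $g$), Lemma~\ref{cor:invg} (at most linear dependence of $g^{ij}(u)$ on $u_{n-1}$), Proposition~\ref{pro:contra} (at most linear dependence of the contravariant Christoffel symbols on $u_{n-1}$), and Proposition~\ref{prop:eta} (non-degeneracy of $\eta$), and then apply Proposition~\ref{pro:DubCox}. Your additional remark reconciling the Lie derivative with the coordinate partial derivative is a harmless clarification not present in the paper, but otherwise the arguments are identical.
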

\begin{proof}
The metric $g^{ij}(u)$ is well-defined on the quotient, it depends at most linearly on $u_{n-1}$ by Lemma \ref{cor:invg} and it is flat by Proposition \ref{prop:gflatness}.  Furthermore, its contravariant Christoffel symbols are also polynomial functions that depend at most linearly on $u_{n-1}$ by Proposition \ref{pro:contra}. Therefore, since $\eta^{ij}(u):=\frac{\partial g^{ij}}{\partial u_{n-1}}(u)$ has non-zero constant determinant by Proposition \ref{prop:eta}, $(g, \eta)$ forms a flat pencil of metrics by Proposition \ref{pro:DubCox}.
\end{proof}

We close this subsection with a result which will play a crucial role to prove the existence of a Dubrovin-Frobenius structure on the orbit space ${\mathbb C}^n/B_n$.

\begin{proposition}\label{prop:flatcoo}[Corollary 2.4 in \cite{DCoxeter}]
There exists a set of $B_n$-invariant, homogeneous polynomials $t_1(p),\dots,t_n(p)$, $\text{deg}\;(t_k(p))=2k$ for all $k=1,\dots,n$, such that $\eta^{ij}$ is constant in the coordinates $(t^1,\dots,t^n)$.
\end{proposition}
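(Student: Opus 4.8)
The plan is to mimic the classical construction of Saito flat coordinates: the bilinear form $\eta^{ij}(u)$ is, by Proposition~\ref{prop:eta}, a non-degenerate constant-determinant Hankel matrix whose entries are polynomial (in fact linear) in the $u$'s, so the associated Levi-Civita connection $\nabla^{\eta}$ has Christoffel symbols which are rational in $u$; flatness of $\eta$ (it is $\frac{\partial g}{\partial u_{n-1}}$ of a flat pencil, hence flat by the pencil structure) guarantees the existence of flat coordinates locally. The real content is that these flat coordinates can be chosen to be \emph{global polynomials} in the $p$-variables which are $B_n$-invariant and homogeneous of the prescribed degrees $\deg t_k=2k$. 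I would follow verbatim the argument of Corollary~2.4 in \cite{DCoxeter}.

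First I would look for the flat coordinates in the form $t_k=t_k(u_1,\dots,u_n)$ and write the condition $\nabla^{\eta}dt_k=0$, i.e. the linear system $\partial_a\partial_b t_k=\Gamma^{c}_{\eta\,ab}\,\partial_c t_k$. Because $\eta^{ij}$ is lower anti-triangular with constant anti-diagonal and $\eta_{ij}$ is again lower anti-triangular with polynomial entries (the Lemma just before Proposition~\ref{prop:flatcoo}), the Christoffel symbols $\Gamma^{c}_{\eta\,ab}(u)$ are polynomial in $u$, and a degree count shows $\deg\Gamma^{c}_{\eta\,ab}=\deg u^c-\deg u^a-\deg u^b$ exactly as in Lemma~\ref{lemma6.13}. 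Hence one can solve the overdetermined system recursively on the degree: looking for $t_k$ homogeneous of degree $2k$ in $p$, i.e. a weighted-homogeneous polynomial of weighted degree $k$ in the $u$'s, the equations become a \emph{finite} triangular linear system for the finitely many coefficients of $t_k$, and integrability (guaranteed by flatness of $\eta$) ensures it is consistent. Normalizing so that $\partial t_k/\partial u_k=1$ (the leading term) one gets $t_k=u_k+(\text{lower-degree corrections})$, which shows the $t_k$ are a genuine coordinate system and automatically $B_n$-invariant homogeneous polynomials of degree $2k$ in $p$, since each $u_i$ is.

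The one point that needs care — and which I expect to be the main (mild) obstacle — is checking that the recursion does not stall, i.e. that at each degree the triangular system is actually solvable with the homogeneity constraint. This is where flatness of $\eta$ enters essentially: the curvature-zero condition is exactly the integrability condition $\partial_{[a}\Gamma^{c}_{\eta\,b]d}+\dots=0$ for the linear PDE system $\nabla^{\eta}dt=0$, so the Frobenius-type argument produces an $n$-dimensional space of solutions $t$; picking a basis adapted to the grading (possible because $\eta$ and its connection are graded) yields the $t_k$ of degree $2k$. One must also note that the degrees $2,4,\dots,2n$ are distinct, so no resonance obstruction occurs in matching leading terms. Since all of this is a direct transcription of Dubrovin's Corollary~2.4 — with $\eta$ here playing the role of the constant Saito metric there, and with the polynomiality of $\eta^{-1}$ and of the Christoffel symbols already established above — I would simply invoke that argument, remarking that every ingredient it uses (non-degeneracy, flatness, polynomial dependence on $u$, homogeneity, $B_n$-invariance) has been verified in the preceding lemmas.
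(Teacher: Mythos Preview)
Your proposal is correct and follows essentially the same route as the paper: both invoke Dubrovin's Corollary~2.4, resting on the flatness of $\eta$, the polynomiality of $\eta^{-1}$ and of the Christoffel symbols of $\eta$ in the $u$-coordinates, and the homogeneity of the whole setup. The only cosmetic difference is that the paper phrases the endgame as ``analyticity of solutions of a compatible Pfaffian system with polynomial coefficients'' together with invariance of the solution space under the scaling $u^i\mapsto c^{d_i}u^i$, whereas you spell out the same mechanism as a finite triangular linear system for the coefficients of a weighted-homogeneous ansatz $t_k=u_k+\text{(lower)}$; these are two descriptions of the same argument.
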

\proof We will make only a few comments about the proof of this statement, referring the reader, for more details, to \cite{DCoxeter}. In this reference the existence of this set of coordinates was proven for (all Coxeter groups and) $g$ equal to the standard Euclidean metric of $\mathbb R^n$. The proof was based on the following hypothesis, all of them   verified also in our case: the flatness of $\eta$ and the polynomiality of both $\eta^{-1}$ and of the Christoffel symbols of $\eta$, when written in the coordinates defined by any set of invariants $(u^1,\dots,u^n)$ with deg($u^i$)$=d_i$. Under these assumptions it is immediate to check that the Pfaffian system defining the flat coordinates has polynomial coefficients. The statement of the theorem then follows from
\begin{itemize}
\item the analiticity of the solutions of a compatible Pfaffian systems with polynomial coefficients (see for instance \cite{Ha}).
\item the invariance of the space of solutions with respect to scaling transformations 
$u^i\to c^{d_i}u^i$. 
\end{itemize}
\endproof

\begin{lemma}\label{lem:DSflat}
In our case, the flat coordinates of Proposition \ref{prop:flatcoo} can be further chosen so that: 
\begin{equation}
\eta^{ij}(t)=\delta_{i,n+1-j}.\label{eq:etacor}
\end{equation}
The coordinates so defined are called Dubrovin-Saito flat coordinates.
\end{lemma}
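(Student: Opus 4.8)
The plan is to start from the $B_n$-invariant homogeneous polynomials $t_1(p),\dots,t_n(p)$ supplied by Proposition \ref{prop:flatcoo} and to show that a suitable constant rescaling $t_i\mapsto\mu_i t_i$ turns the (constant) matrix $\eta^{ij}(t)$ into the anti-diagonal identity. The observation that makes this work is that the coordinate change $u\mapsto t$ is graded-triangular: since $t_k$ is a $B_n$-invariant homogeneous polynomial in $p$ of degree $2k$ and the invariant ring is generated by $u_1,\dots,u_n$ with $\deg u_i=2i$, the function $t_k$ is a weighted-homogeneous polynomial of weight $2k$ in the $u$'s. Hence $\partial t_i/\partial u_a$ is weighted-homogeneous of weight $2(i-a)$ and therefore vanishes identically for $a>i$; in particular the Jacobian $(\partial t_i/\partial u_j)$ is lower triangular and, being invertible, has nonvanishing (hence constant) diagonal entries $c_i:=\partial t_i/\partial u_i$.

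Next I would insert this into the tensor transformation law $\eta^{ij}(t)=\sum_{a,b}\eta^{ab}(u)\,\frac{\partial t_i}{\partial u_a}\frac{\partial t_j}{\partial u_b}$ and read off the shape of the result. Each summand is a polynomial in the $u$'s of weight $\bigl(2(a+b)-2n-2\bigr)+2(i-a)+2(j-b)=2(i+j-n-1)$, so $\eta^{ij}(t)$ is weighted-homogeneous of that weight; but Proposition \ref{prop:flatcoo} says it is a constant, and the only weight-zero polynomials in the $u$'s are scalars, so $\eta^{ij}(t)=0$ whenever $i+j\neq n+1$. When $i+j=n+1$, the constraints $\eta^{ab}(u)=0$ for $a+b<n+1$ (Proposition \ref{prop:eta}) together with $\partial t_i/\partial u_a=0$ for $a>i$ leave only the single term $(a,b)=(i,n+1-i)$, so, using the normalization of Remark \ref{remark:g11},
$$\eta^{i,n+1-i}(t)=\eta^{i,n+1-i}(u)\,c_i c_{n+1-i}=c_i c_{n+1-i}=:\lambda_i .$$
Therefore $\eta^{ij}(t)=\lambda_i\,\delta_{i,n+1-j}$; moreover $\lambda_i=\lambda_{n+1-i}$ since $\eta$ is symmetric, and $\lambda_i\neq0$ since $\eta$ is non-degenerate.

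Finally I would carry out the rescaling. Putting $\tilde t_i:=\mu_i t_i$ for nonzero constants $\mu_i$ gives $\eta^{ij}(\tilde t)=\mu_i\mu_j\,\eta^{ij}(t)=\mu_i\mu_{n+1-i}\lambda_i\,\delta_{i,n+1-j}$, so it suffices to solve $\mu_i\mu_{n+1-i}=\lambda_i^{-1}$ for all $i$; this is consistent because $\lambda_i=\lambda_{n+1-i}$. Concretely one pairs the indices $\{i,n+1-i\}$ and sets $\mu_i=\lambda_i^{-1}$, $\mu_{n+1-i}=1$, together with $\mu_m=\lambda_m^{-1/2}$ for the fixed index $m=(n+1)/2$ when $n$ is odd (a square root being available over $\mathbb{C}$). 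The $\tilde t_k$ are still $B_n$-invariant homogeneous polynomials with $\deg\tilde t_k=2k$, and now $\eta^{ij}(\tilde t)=\delta_{i,n+1-j}$, which is \eqref{eq:etacor}; these are the Dubrovin--Saito flat coordinates.

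The one point that will need care is the middle paragraph: one must make precise that ``constant plus weighted-homogeneous of nonzero weight forces zero'' (which is exactly the statement that every $u_i$ carries strictly positive weight) and check that exactly one term survives along the anti-diagonal. Everything else is formal, and no input beyond Proposition \ref{prop:flatcoo} and the shape of $\eta^{ij}(u)$ recorded in Proposition \ref{prop:eta} is required.
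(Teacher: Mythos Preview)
Your proof is correct and self-contained, but it follows a genuinely different route from the paper's. The paper does not argue directly via weighted homogeneity of $\eta^{ij}(t)$; instead it invokes Corollary~1.1 of Dubrovin's \emph{Geometry of 2D topological field theories} to reduce the claim to the single condition $\eta_{nn}(t)=0$, and then verifies that condition by noting that $\eta^{nn}(u)=0$ forces the $(n,n)$-minor of the lower anti-triangular matrix $\eta^{ij}(u)$ to vanish, whence $\eta_{nn}(u)=0$, and that this survives the passage to flat coordinates because $\partial u^i/\partial t^n=0$ for $i\neq n$ by degree. Your argument instead reproves, in this concrete setting, exactly what the cited corollary would supply: the graded-triangular Jacobian plus the lower anti-triangular shape of $\eta^{ab}(u)$ force $\eta^{ij}(t)$ to be weighted-homogeneous of weight $2(i+j-n-1)$ and hence anti-diagonal, after which an explicit rescaling finishes. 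Your approach has the advantage of being entirely internal to the paper (no appeal to \cite{DLectures}); the paper's approach is terser but leans on an external reference. Both are complete.
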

\proof By Proposition \ref{prop:flatcoo} flat coordinates for $\eta^{ij}$ are homogenous invariant polynomials with distinct degrees. Therefore, in order to prove the claim of the Lemma, by Corollary 1.1 in \cite{DLectures} it is enough to show that there exists a system of flat coordinates $t^1, \dots, t^n$ such that $\eta_{nn}(t)=0$. 
Consider the contravariant metric $\eta$ written in the $u$-variables, see \eqref{eq:eta}.
Observe that $\eta^{nn}(u)=0$. Recall that 
$\eta_{nn}(u)=\frac{1}{\det(\eta)}\rm{adj}(\eta)_{nn}$, where $\rm{adj}(\eta)$ $=C^T$ and where $C$ is the cofactor matrix of $\eta$, whose entry $(i,j)$ is $(-1)^{i+j}$ times the $(i,j)$ minor of $\eta$. Since $\eta$ is lower anti-triangular, its $(n,n)$ minor is zero, therefore $\eta_{nn}(u)=0$. 
Rewriting $\eta^{-1}$ in a flat coordinate system $(t^1, \dots, t^n)$ we have
$\eta_{ij}(u(t))\frac{\partial u^i}{\partial t^k}\frac{\partial u^j}{\partial t^l} dt^k \otimes dt^l.$
Now 
$$\eta_{nn}(t)=\eta_{ij}(u(t))\frac{\partial u^i}{\partial t^n}\frac{\partial u^j}{\partial t^n}.$$
But since $\frac{\partial u^i}{\partial t^n}=0$ for degree reasons unless $i=n$, and analogously for the other partial derivatives, we have
$$\eta_{nn}(t)=\eta_{nn}(u(t))\left(\frac{\partial u^n}{\partial t^n}\right)^2=0,$$
(no sum over $n$) since $\eta_{nn}(u)=0$.
\endproof

\begin{remark}
It is easy to check that non-vanishing contravariant Christoffel symbols $a^i_{jk}$ of the
 Saito flat metric in the coordinates $(u^1,\dots,u^n)$ are given by
\[a^{ij}_{i+j-n-1}=4(n-j).\]
Using the above formula one can verify that the invariants $u^1$, $\tau$ and $u^n$
 are flat coordinates.
\end{remark}

\begin{remark}\label{remark2:g11}
It is also immediate to verify that, in the Dubrovin-Saito flat coordinates, $g^{11}(t)=n$, up to a possible rescaling by a constant, see Remark \ref{remark:g11}.
\end{remark}

\section{Dubrovin-Frobenius structure on ${\mathbb C}^n /B_n$}
\subsection{From flat pencils of metrics to Dubrovin-Frobenius manifolds}
Flat pencils of contravariant metrics are a key component in the theory of Dubrovin-Frobenius manifolds. 
 More precisely, one can prove that any Dubrovin-Frobenius structure defines a flat pencil of contravariant metrics (see \cite{DLectures}), and, conversely, that a Dubrovin-Frobenius structure can be defined starting from a flat pencil of metrics satisfying the following three additional properties, see \cite{du97} (see also \cite{DLectures} and \cite{ABLR21}):
\begin{itemize}
\item \emph{Exactness}: there exists a vector field $e$ such that
\beq\label{eq:exact}
\cL_e g=\eta,\qquad \cL_e \eta=0,
\eeq
where $\cL_e$ denotes Lie derivative with respect to $e$.
\item \emph{Homogeneity}: 
\beq\label{eq:homogen}
\cL_E g=(d-1)g,
\eeq
where $E^i:=g^{il}\eta_{lj}e^j$.
\item \emph{Egorov property}: locally there exists a function $\tau$ such that
\beq\label{eq:egorov}
e^i=\eta^{is}\d_s\tau,\qquad E^i=g^{is}\d_s\tau.
\eeq
\end{itemize}
Exactness implies that $[e,E]=e$ and combining this with the homogeneity condition one obtains
\begin{equation}
\cL_E \eta=\cL_E\cL_e g=\cL_e\cL_E g-\cL_{[E,e]} g=(d-2)\eta.\label{eq:hometa}
\end{equation}
Moreover, for Dubrovin-Frobenius manifolds the vector fields $e$ and $E$ coincide with the unit vector field and the Euler vector field, respectively.

To prove that the flat pencil $(g,\eta)$ induces a Dubrovin-Frobenius structure on $\mathbb C^n/B_n$, we will start to show that the $(g,\eta)$ is exact, homogeneous and that it satisfies the Egorov property or, using Dubrovin's terminology, that it is quasihomogeneous.

\begin{lemma}
The pair $(g,\eta)$ form an exact pencil. 
\end{lemma}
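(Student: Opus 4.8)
The plan is to exhibit explicitly a vector field $e$ on $\mathbb{C}^n/B_n$ for which $\cL_e g = \eta$ and $\cL_e \eta = 0$, using the fact (Proposition~\ref{prop:eta}) that $\eta^{ij}(u) = \partial g^{ij}/\partial u_{n-1}(u)$. The natural candidate is $e = \partial/\partial u_{n-1}$, exactly as in the standard Dubrovin--Saito setting where the unit field is the derivative with respect to the top-degree invariant (here shifted to the \emph{second} highest degree, in accordance with the construction of $\eta$). First I would record that, since the components $g^{ij}(u)$ depend at most linearly on $u_{n-1}$ (Lemma~\ref{cor:invg}), the contravariant identity $\cL_{\partial/\partial u_{n-1}} g = \eta$ is literally the statement $\partial g^{ij}/\partial u_{n-1} = \eta^{ij}$, which is Proposition~\ref{prop:eta}. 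One must be slightly careful that the Lie derivative of a $(2,0)$-tensor along a coordinate vector field $\partial/\partial u_{n-1}$ in the coordinates $(u_1,\dots,u_n)$ is just $\partial_{u_{n-1}}$ applied componentwise, since the field has constant (unit) components in these coordinates; this makes the first identity immediate.

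For the second identity $\cL_e \eta = 0$: from \eqref{eq:eta}, $\eta^{ij}(u) = 4(2n-i-j)u_{i+j-n-1}$ depends on the $u$-variables only through $u_0,\dots,u_{n-2}$ (because $i+j-n-1 \le n-2$ whenever the entry is nonzero, i.e. on or below the anti-diagonal), together with the convention $u_0 = 1$ and $u_k = 0$ for $k \le 0$ or $k \ge n+1$. Hence $\partial \eta^{ij}/\partial u_{n-1} = 0$ identically, and the same computation as above gives $\cL_e \eta = 0$. Equivalently, one can argue at the level of the Hankel structure: differentiating a Hankel matrix whose $(i,j)$ entry is a multiple of $u_{i+j-n-1}$ with respect to $u_{n-1}$ picks out only entries with $i+j-n-1 = n-1$, i.e. $i+j = 2n$, i.e. the single entry $(n,n)$ — but $\eta^{nn} = 4(2n-2n)u_{n-1} = 0$, so the derivative vanishes.

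I would then note that exactness in Dubrovin's sense requires $e$ to be a genuine vector field on the quotient, which it is since $\partial/\partial u_{n-1}$ makes sense once the $u_i$ are taken as global coordinates on $\mathbb{C}^n/B_n$; and one should observe in passing (for later use, though not strictly needed for this lemma) that $e^i = \eta^{is}\partial_s u_1$ up to normalization, tying in with the Egorov property to be proved next — indeed $\eta^{is}\partial_s u_1 = \eta^{i1} = 4(2n-i-1)\delta_{?}$... more precisely $\eta^{i1}$ is nonzero only for $i = n$, giving $e = \mathrm{const}\cdot \partial/\partial u_{n-1}$ after checking $n+1-i$ indexing, consistent with $\tau = u_1$. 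The main (and only real) point to get right is the bookkeeping of which $u_k$ appear in $\eta^{ij}$ versus in $g^{ij}$, i.e. that $u_{n-1}$ appears linearly in $g$ and not at all in $\eta$; everything else is a one-line consequence of Proposition~\ref{prop:eta} and the definitions $u_0 = 1$, $u_k = 0$ for $k \notin \{1,\dots,n\}$. I do not expect any serious obstacle here: this lemma is essentially the payoff of having computed $\eta$ as a derivative of $g$ in Proposition~\ref{prop:eta}.
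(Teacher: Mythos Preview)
Your argument for the lemma itself is correct and is exactly the paper's: $\cL_e g=\eta$ holds by the very definition of $\eta$ in Proposition~\ref{prop:eta}, and $\cL_e\eta=0$ because the explicit formula \eqref{eq:eta} shows $\eta^{ij}$ does not involve $u_{n-1}$ (the only place it could appear is $(i,j)=(n,n)$, where the coefficient $2n-i-j$ vanishes).

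One caution about your parenthetical remark: $\tau$ is \emph{not} $u_1$. Indeed $\eta^{is}\partial_s u_1=\eta^{i1}=4(n-1)\delta^i_n$, which is proportional to $\partial/\partial u_n$, not to $e=\partial/\partial u_{n-1}$. The correct potential for the Egorov property, computed later in the paper, is $\tau=\tfrac{1}{4(n-1)}\big(u_2-\tfrac{n-2}{2(n-1)}u_1^2\big)$; you should drop or correct that aside before it propagates into the next section.
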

\begin{proof}
The first of \eqref{eq:exact} is true by definition and the second follows from the fact that $\eta$ does not depend on $u_{n-1}$ as it can be inferred from formula \eqref{eq:eta}.
\end{proof}

Let $e=\frac{\partial}{\partial u_{n-1}}$. Write $\partial_k:=\frac{\partial}{\partial u_k}$ for all $k$. Recall that 
\begin{equation}
\eta^{ij}=4(2n-i-j)u^{i+j-n-1}.\label{tau1}
\end{equation}
Then
\begin{lemma} If $\tau$ is given by 
\begin{equation} 
\tau:=\frac{1}{4(n-1)}\left(u^2-\frac{(n-2)}{2(n-1)}(u^1)^2\right)\label{tau2}
\end{equation}
then
\begin{equation}
e^i=\eta^{ij}\partial_j \tau,\label{tau3}
\end{equation}
so the first of \eqref{eq:egorov} is fulfilled. 
\end{lemma}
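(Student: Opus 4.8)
The plan is to verify \eqref{tau3} by a direct componentwise computation. Since $e=\frac{\partial}{\partial u_{n-1}}$ one has $e^i=\delta^i_{n-1}$, so the claim is equivalent to the identity $\eta^{ij}\partial_j\tau=\delta^i_{n-1}$ for every $i=1,\dots,n$. First I would observe that $\tau$, as given by \eqref{tau2}, depends only on $u^1$ and $u^2$, so the sum over $j$ collapses to the two terms $j=1,2$; differentiating, $\partial_1\tau=-\dfrac{(n-2)u^1}{4(n-1)^2}$ and $\partial_2\tau=\dfrac{1}{4(n-1)}$, while $\partial_k\tau=0$ for $k\ge 3$.

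Next I would substitute the explicit expression $\eta^{ij}=4(2n-i-j)u^{i+j-n-1}$ from \eqref{tau1} (equivalently Proposition \ref{prop:eta}), together with the truncation conventions $u^0=1$ and $u^k=0$ for $k<0$ or $k>n$, and split into three cases according to the value of $i$. For $i\le n-2$ both $\eta^{i1}=4(2n-i-1)u^{i-n}$ and $\eta^{i2}=4(2n-i-2)u^{i-n+1}$ vanish, since their $u$-index is strictly negative; hence $\eta^{ij}\partial_j\tau=0=\delta^i_{n-1}$. For $i=n-1$ one has $\eta^{n-1,1}=4n\,u^{-1}=0$ and $\eta^{n-1,2}=4(n-1)u^0=4(n-1)$, so $\eta^{n-1,j}\partial_j\tau=4(n-1)\cdot\frac{1}{4(n-1)}=1=\delta^{n-1}_{n-1}$. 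For $i=n$ both terms survive: $\eta^{n1}\partial_1\tau=4(n-1)\cdot\bigl(-\frac{(n-2)u^1}{4(n-1)^2}\bigr)=-\frac{(n-2)u^1}{n-1}$ and $\eta^{n2}\partial_2\tau=4(n-2)u^1\cdot\frac{1}{4(n-1)}=\frac{(n-2)u^1}{n-1}$, and these cancel, giving $0=\delta^n_{n-1}$. Assembling the cases yields $\eta^{ij}\partial_j\tau=\delta^i_{n-1}=e^i$, which is \eqref{tau3} and hence the first equation of \eqref{eq:egorov}.

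There is no genuinely hard step here; the whole content is careful bookkeeping with the index ranges and with the truncation convention on the elementary symmetric polynomials. The only point worth flagging is that the precise coefficient $\frac{n-2}{2(n-1)}$ of $(u^1)^2$ in \eqref{tau2} is exactly what forces the $i=n$ component to vanish; the natural route to \eqref{tau2} is to read $\eta^{ij}\partial_j\tau=\delta^i_{n-1}$ as an overdetermined linear system for the partial derivatives $\partial_j\tau$ — solvable because $\eta$ is lower anti-triangular with constant anti-diagonal — then integrate the two surviving conditions on $\partial_1\tau$ and $\partial_2\tau$ (the additive constant being immaterial for \eqref{tau3}). Once $\tau$ is written down, the verification above closes the lemma.
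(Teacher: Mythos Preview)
Your proof is correct and follows essentially the same approach as the paper's own proof: both compute $\partial_j\tau$ for $j=1,2$, substitute the explicit formula \eqref{tau1} for $\eta^{ij}$, and then check the three cases $i\le n-2$, $i=n-1$, $i=n$ using the truncation convention $u^k=0$ for $k<0$ and $u^0=1$. Your write-up is in fact slightly cleaner than the paper's (you isolate $\partial_1\tau$ and $\partial_2\tau$ first), and your closing remark on how the coefficient $\tfrac{n-2}{2(n-1)}$ is forced by the $i=n$ equation is a nice addition not present in the original.
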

\begin{proof}
The proof is by a direct computation.
Using \eqref{tau2} and \eqref{tau1}, one obtains
\begin{align}
e^i&=\frac{1}{4(n-1)}\sum_{j=1}^n\left(\eta^{ij}\delta_{j2}-\frac{(n-2)}{(n-1)}\eta^{ij}\delta_{j1}u^1\right)\nonumber\\
&=\frac{1}{4(n-1)}\eta^{i2}-\frac{(n-2)}{4(n-1)}\eta^{i1}u^1\nonumber\\
&=\frac{(2n-i-2)}{(n-1)}u^{i+1-n}-\frac{(n-2)(2n-i-1)}{(n-1)^2}u^{i-n}u^1.\label{tau4}
\end{align}
Since $u^k=0$ for all $k<0$, if $i<n-1$ both summands in \eqref{tau4} are zero. If $i=n$, \eqref{tau4} becomes
\[
\frac{(2n-n-2)}{(n-1)}u^{n+1-n}-\frac{(n-2)(2n-n-1)}{(n-1)^2}u^{n-n}u^1=\frac{(n-2)}{(n-1)}u^1-\frac{(n-2)}{(n-1)}u^0u^1=0,
\]
since $u^0=1$. Finally, if $i=n-1$, one obtains
\[
\frac{(2n-(n-1)-2)}{(n-1)}u^{n-1+1-n}-\frac{(n-2)(2n-(n-1)-1)}{(n-1)^2}u^{(n-1)-n}u^1=1,
\]
which proves our statement.
\end{proof}

\begin{lemma}
Defining
\begin{equation}
E^i:=g^{ij}\partial_j \tau\label{tau5}
\end{equation}
one has that
\begin{equation}
E^i=g^{il}\eta_{lj}e^j,
\end{equation}
so that the second of \eqref{eq:egorov} is fulfilled. 
\end{lemma}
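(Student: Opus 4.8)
The plan is to deduce the desired identity $E^i = g^{il}\eta_{lj}e^j$ directly from the definitions already in place, using only that $e^i = \eta^{ij}\partial_j\tau$ (the previous lemma) and that $E^i := g^{ij}\partial_j\tau$ (Equation \eqref{tau5}), together with the elementary fact that $\eta^{ij}$ and $\eta_{ij}$ are mutually inverse. First I would start from the right-hand side and substitute the formula for $e^j$:
\[
g^{il}\eta_{lj}e^j = g^{il}\eta_{lj}\,\eta^{jk}\partial_k\tau.
\]
Then I would contract $\eta_{lj}\eta^{jk} = \delta_l^k$, which collapses the expression to $g^{il}\partial_l\tau$, and this is exactly $E^i$ by the definition \eqref{tau5}. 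So the proof is a one-line computation once the bookkeeping is set up; I would write it as a short chain of equalities in an \texttt{align*} environment, citing the previous lemma for the expression of $e^j$ and \eqref{tau5} for the definition of $E^i$.

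The only genuine content to check — and the step I would flag as the (very mild) main obstacle — is that $\eta_{ij}$, the metric defined by the inverse of the matrix $\eta^{ij}$, does exist as a bona fide object on the quotient and that the contraction $\eta_{lj}\eta^{jk}=\delta_l^k$ is legitimate pointwise. This is already guaranteed: Proposition \ref{prop:eta} shows $\eta^{ij}(u)$ is a non-degenerate Hankel (lower anti-triangular) matrix with constant nonzero determinant, and the subsequent lemma shows $\eta^{-1}$ has polynomial entries in the $u$'s. Hence $\eta_{ij}$ is globally well-defined on $\mathbb{C}^n/B_n$ and invertible everywhere, so the index gymnastics above are valid without any open-set restriction.

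One may also wish to remark, for the reader's orientation, why this statement is the natural companion to the previous lemma: together the two lemmata establish the Egorov property \eqref{eq:egorov} for the pair $(g,\eta)$ with the explicit potential $\tau$ of \eqref{tau2}, which is one of the three structural hypotheses (exactness, homogeneity, Egorov) needed to invoke Dubrovin's reconstruction of a Dubrovin-Frobenius structure from a flat pencil. I would therefore close the proof with a single sentence noting that \eqref{tau3} and \eqref{tau5} together exhibit $\tau$ as a common potential for $e$ (with respect to $\eta$) and $E$ (with respect to $g$), so that the second half of \eqref{eq:egorov} now holds as claimed.
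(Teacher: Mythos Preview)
Your proof is correct and is essentially the same as the paper's: both argue directly from \eqref{tau3}, \eqref{tau5}, and the identity $\eta_{lj}\eta^{jk}=\delta^k_l$. The extra remarks you include about the global well-definedness of $\eta^{-1}$ are fine but not needed here.
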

\begin{proof}
This follows from \eqref{tau3} and from \eqref{tau5}, recalling that $\eta^{ij}\eta_{jl}=\delta^i_l$. 
\end{proof}

One can prove that
\begin{lemma} Writing $\partial_k=\frac{\partial}{\partial p^k}$, one has
\begin{equation}
E=\frac{1}{2(n-1)}\sum_{k=1}^np^k\partial_k.\label{tau6}
\end{equation}
\end{lemma}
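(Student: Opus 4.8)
The statement to prove is that the Euler vector field $E$, defined intrinsically on the quotient by $E^i = g^{ij}\partial_j\tau$ (equivalently $E^i = g^{il}\eta_{lj}e^j$), has the explicit form $E = \frac{1}{2(n-1)}\sum_{k=1}^n p^k\partial_k$ in the original $p$-coordinates. The natural approach is to compute $E$ directly in the $p$-variables using the chain rule, exploiting the homogeneity of everything in sight. First I would write $E^i = g^{ij}(u)\,\partial_{u_j}\tau$ and push this forward to the $p$-coordinates: the components of $E$ in the $p$-frame are $E^k_{(p)} = \frac{\partial p^k}{\partial u_j}\cdot(\text{components in }u)$, but it is cleaner to work contravariantly and note that $g^{ij}(u) = g^{ab}(p)\frac{\partial u_i}{\partial p_a}\frac{\partial u_j}{\partial p_b}$ by Lemma \ref{cor:invg}, so that $E = g^{ij}(u)\partial_j\tau \, \partial_{u_i}$ becomes, in the $p$-frame, $E = g^{ab}(p)\frac{\partial u_j}{\partial p_b}\partial_{u_j}\tau \cdot \frac{\partial u_i}{\partial p_a}\partial_{u_i} = g^{ab}(p)\,(\partial_{p_b}\tau)\,\partial_{p_a}$, where now $\tau$ is viewed as a polynomial in $p$ via \eqref{eq:defu}.

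**Key steps.** The computation then reduces to: (1) express $\tau$ as a polynomial in the $p_k^2$ using $u^1 = \sum_k p_k^2$ and $u^2 = \sum_{i<j} p_i^2 p_j^2$; in fact $\tau = \frac{1}{4(n-1)}\big(\sum_{i<j}p_i^2p_j^2 - \frac{n-2}{2(n-1)}(\sum_k p_k^2)^2\big)$, and one computes $\partial_{p_b}\tau = \frac{1}{2(n-1)}p_b\big(\sum_{i\neq b}p_i^2 - \frac{n-2}{n-1}\sum_k p_k^2\big)$; (2) contract with $g^{ab}(p) = (1-\delta^{ab})/(p_a p_b)$, so that $E^a = \sum_{b\neq a}\frac{1}{p_a p_b}\partial_{p_b}\tau$; the factor $p_b$ in $\partial_{p_b}\tau$ cancels the denominator, leaving $E^a = \frac{1}{2(n-1)p_a}\sum_{b\neq a}\big(\sum_{i\neq b}p_i^2 - \frac{n-2}{n-1}\sum_k p_k^2\big)$; (3) evaluate the double sum: $\sum_{b\neq a}\sum_{i\neq b}p_i^2 = \sum_{b\neq a}(u^1 - p_b^2) = (n-1)u^1 - (u^1 - p_a^2) = (n-2)u^1 + p_a^2$, while $\sum_{b\neq a}\frac{n-2}{n-1}u^1 = (n-2)u^1$; the two $(n-2)u^1$ terms cancel, leaving $E^a = \frac{1}{2(n-1)p_a}\cdot p_a^2 = \frac{p_a}{2(n-1)}$, which is exactly the claim.

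**Alternative / sanity check.** Alternatively — and perhaps more elegantly for the writeup — one can avoid recomputing and instead use homogeneity: $E$ is characterized up to lower-order ambiguity by $\mathcal{L}_E g = (d-1)g$ together with $E^i = g^{il}\eta_{lj}e^j$. Since $g^{ij}(p)$ has degree $-2$ in $p$ and the Euler field $\sum p_k\partial_{p_k}$ scales degree-$m$ functions by $m$, the vector field $\frac{1}{2(n-1)}\sum p^k\partial_{p^k}$ satisfies $\mathcal{L}_{E}g = \frac{-2}{2(n-1)}g = \frac{-1}{n-1}g$, matching $(d-1)g$ with $d = 1 - \frac{1}{n-1} = \frac{n-2}{n-1}$; one then checks this candidate also satisfies $E^i = g^{il}\eta_{lj}e^j$ (equivalently $\mathcal{L}_E\eta = (d-2)\eta$ via \eqref{eq:hometa}, using that $\eta^{ij}$ has the degrees recorded in Proposition \ref{prop:eta}), and invokes uniqueness of the quasihomogeneous structure. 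The cleanest is probably to just present the direct computation of steps (1)–(3), since it is short once the cancellation in step (3) is spotted.

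**Main obstacle.** There is no serious obstacle here — this is a routine verification. The only point requiring a little care is the bookkeeping in step (3): correctly handling the diagonal exclusions $b\neq a$ and $i\neq b$ simultaneously, and seeing that the $(n-2)u^1$ contributions cancel exactly, leaving only the $p_a^2$ term. The choice of the precise constant $\frac{n-2}{2(n-1)}$ inside $\tau$ in \eqref{tau2} is exactly what engineers this cancellation, so the computation is really a consistency check that \eqref{tau2} was the right normalization.
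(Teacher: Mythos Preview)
Your direct computation in steps (1)--(3) is correct and is precisely the paper's proof: compute $\partial_{p_b}\tau$ explicitly from \eqref{tau2}, contract with $g^{ab}(p)$, and observe the cancellation of the $(n-2)u^1$ terms to leave $E^a = p_a/2(n-1)$.

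One small correction to your alternative sanity check: you treated $\mathcal L_E g$ as if it were simply the Euler derivative of the component functions $g^{ij}(p)$, but $g$ is a $(2,0)$-tensor, so the basis vectors contribute as well. Concretely, with $X=\sum_k p^k\partial_{p^k}$ one has $(\mathcal L_X g)^{ij} = X^k\partial_k g^{ij} - g^{kj}\partial_k X^i - g^{ik}\partial_k X^j = -2g^{ij}-g^{ij}-g^{ij}=-4g^{ij}$, hence $\mathcal L_E g = \tfrac{-4}{2(n-1)}g = -\tfrac{2}{n-1}g$, giving $d=1-\tfrac{2}{n-1}$ (as in the paper's Proposition following this lemma), not $d=1-\tfrac{1}{n-1}$. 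This does not affect your main argument, which you rightly identify as the one to present.
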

\begin{proof}

The proof follows at once from \eqref{eq:defu}, \eqref{tau2} and \eqref{tau5}. First one computes
\begin{equation*}
\partial_j (u^1)^2= 4 p^j u^1\quad\text{and}\quad\partial_j u^2=2p^ju^1-(p^j)^3,
\end{equation*}
which yield
\[
\partial_j\tau=\frac{1}{2(n-1)}\left[\frac{p^ju^1}{n-1}-(p^j)^3\right].
\]
Then
\begin{align*}
E^i=g^{ij}(p)\partial_j\tau&=\frac{1}{2(n-1)}\sum_{j=1}^n\frac{(1-\delta_{ij})}{p^ip^j}\left[\frac{p^ju^1}{n-1}-(p^j)^3\right]\\
&=\frac{1}{2p^i(n-1)}\sum_{j\neq i}\left[\frac{u^1}{n-1}-(p^j)^2\right]\\
&=\frac{1}{2p^i(n-1)}\left[\frac{(n-1)u^1}{n-1}-u^1+(p^i)^2\right]\\
&=\frac{p_i}{2(n-1)}.
\end{align*}
\end{proof}
Recall that $\text{deg}(u_k)=2k$, and that $g^{lk}$ is a homogeneous polynomial of degree $2k+2l-4$ (in the $u$s). From this it follows:
\begin{proposition} We have that
\begin{equation}
\mathcal L_E g=(d-1)g,\label{tau7}
\end{equation}
where $d=1-\frac{2}{(n-1)}$,
therefore condition \eqref{eq:homogen} is fulfilled. 
\end{proposition}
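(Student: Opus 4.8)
The plan is to establish \eqref{tau7} by a direct computation carried out in the $p$-coordinates, where both $E$ and $g$ are in their simplest form, and then to note that the statement transfers verbatim to the $u$-coordinates because $g$ and $E$ descend to the quotient. By \eqref{tau6}, $E=\frac{1}{2(n-1)}\sum_{k=1}^n p^k\partial_{p^k}$ is a constant multiple of the standard Euler vector field, so for any function $f$ homogeneous of degree $m$ in the $p$-variables one has $E(f)=\frac{m}{2(n-1)}f$.

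Next I would expand the Lie derivative of the contravariant metric,
\[
(\mathcal L_E g)^{ij}=E^k\,\partial_{p^k}g^{ij}-g^{kj}\,\partial_{p^k}E^i-g^{ik}\,\partial_{p^k}E^j .
\]
From $E^i=\frac{p^i}{2(n-1)}$ we get $\partial_{p^k}E^i=\frac{\delta^i_k}{2(n-1)}$, so the last two terms together contribute $-\frac{1}{n-1}g^{ij}$. For the first term, \eqref{eq:c1} shows that $g^{ij}(p)=\frac{1-\delta^{ij}}{p^ip^j}$ is homogeneous of degree $-2$ in the $p$-variables, whence $E^k\partial_{p^k}g^{ij}=-\frac{1}{n-1}g^{ij}$. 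Adding, $(\mathcal L_E g)^{ij}=-\frac{2}{n-1}g^{ij}$, i.e. $\mathcal L_E g=(d-1)g$ with $d-1=-\frac{2}{n-1}$, so $d=1-\frac{2}{n-1}$ as claimed.

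Finally, since $g$ is $B_n$-invariant and descends to the quotient (Lemma~\ref{cor:invg}) and $E$ is the image of a well-defined field on the orbit space, the same identity holds in the $u$-coordinates; alternatively one may redo the computation there, using $E^i=\frac{i}{n-1}u^i$ (immediate from \eqref{tau6} and $\deg u_i=2i$) together with the homogeneity of $g^{ij}(u)$ of degree $2(i+j)-4$ in the $p$-variables from Lemma~\ref{cor:invg}, which gives $E^k\partial_{u^k}g^{ij}-\frac{i+j}{n-1}g^{ij}=\frac{(i+j)-2}{n-1}g^{ij}-\frac{i+j}{n-1}g^{ij}=-\frac{2}{n-1}g^{ij}$, the same answer. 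There is no substantive obstacle here; the only point deserving care is to keep the scaling weight $\frac{1}{2(n-1)}$ of $E$ and the homogeneity weights of the entries of $g$ consistent, and one can sanity-check the resulting value of $d$ against \eqref{eq:hometa}, which then forces $\mathcal L_E\eta=(d-2)\eta$ with $d-2=-\frac{n+1}{n-1}$.
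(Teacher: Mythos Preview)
Your proof is correct and rests on the same idea as the paper's: $E$ is a constant multiple of the Euler vector field, and the entries of the cometric are homogeneous, so the Lie derivative acts by the appropriate scalar. The only difference is cosmetic: the paper carries out the computation directly in the $u$-coordinates, using $\mathcal L_E(du^k)=\frac{k}{n-1}\,du^k$ and $\mathcal L_E g^{kl}=\frac{k+l-2}{n-1}\,g^{kl}$, whereas your primary computation is in the $p$-coordinates (and you then give the $u$-coordinate version as an alternative, which is exactly the paper's argument).
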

\begin{proof}
First one observes that $\mathcal L_E(du^k)=\frac{k}{(n-1)}du^k$. Then
\begin{align*}
(\mathcal L_Eg)(du^l,du^k)&=\mathcal L_E(g(du^l,du^k))-g(\mathcal L_E du^l,du^k)-g(du^l,\mathcal L_E du^k)\\
=&\mathcal L_E g^{kl}-\frac{l}{(n-1)}g^{lk}-\frac{k}{(n-1)}g^{lk}\\
=&\mathcal L_E g^{kl}-\frac{l+k}{(n-1)}g^{lk}\\
=&\frac{l+k-2}{(n-1)}g^{lk}-\frac{l+k}{(n+1)}g^{lk}\\
=&-\frac{2}{(n-1)}g^{lk}\\
=&-\frac{2}{(n-1)}g(du^l,du^k).
\end{align*}
\end{proof}

Before moving on, we observe that
\begin{remark}\label{rem:hom}
If $(f^1,\dots,f^n)$ is any system of \emph{homogeneous} coordinates in the $p$-variables
\begin{align*}
E=\frac{1}{2(n-1)}\sum_{k=1}^np^k\frac{\partial}{\partial p^k}&=\frac{1}{2(n-1)}\sum_{k=1}^np^k\sum_{j=1}^n \frac{\partial f^j}{\partial p^k}\frac{\partial}{\partial f^j}\\
&=\frac{1}{2(n-1)}\sum_{j=1}^n\left(\sum_{k=1}^np^k\frac{\partial f^j}{\partial p^k}\right)\frac{\partial}{\partial f^j}\\
&=\frac{1}{2(n-1)}\sum_{j=1}^n\text{deg}\;(f^j)f^j\frac{\partial}{\partial f^j}.
\end{align*}
\end{remark}

Our next step in the construction of the Dubrovin-Frobenius structure on $\mathbb C^n/B_n$, will be the introduction of the constant structures defining the relevant product. To this end, recall that a homogeneous flat pencil $(g,\eta)$ on $M$ is called regular if the endomorphism of $TM$ defined by  
\begin{equation}
R^i_j=\nabla^\eta_jE^i-\nabla^g_jE^i,\label{eq:reg}
\end{equation}
is invertible, where, in the previous formula, $\nabla^\eta,\nabla^g$ denote the (covariant derivative operators of the) Levi-Civita connections of the metrics $\eta$ and, respectively, $g$. Under the regularity assumption, the flat pencil defines a structure of a Dubrovin-Frobenius manifold on $M$ whose structure constants are defined by the following formulas
\begin{equation}
c_{hk}^j=L^s_h({\Gamma_\eta}^l_{sk}-{\Gamma_g}^l_{sk})(R^{-1})^j_l\label{eq:c11}
\end{equation}
where $L^s_h=g^{si}\eta_{ih}$, ${\Gamma_\eta}^l_{sk}$ and ${\Gamma_g}^{l}_{sk}$ are the Christoffel's symbols of the metrics $\eta$ and, respectively $g$. From now on, unless explicitly stated, all the tensors will be written in the in the flat Dubrovin-Saito coordinates, see Proposition \ref{prop:flatcoo} and Lemma \ref{lem:DSflat} above. Since in these coordinates ${\Gamma_\eta}^l_{sk}=0$ for all $l,s,k$, in order to keep the notation more readable, we use directly the notation $\Gamma^i_{jk}$ for the Christoffel symbols associated to $g$ (as we did in Section 5.4). Under these assumptions, Formula \eqref{eq:c11} becomes
\begin{equation}
c_{hk}^j=-L^s_h{\Gamma}^l_{sk}(R^{-1})^j_l=-g^{si}\eta_{ih}{\Gamma}^l_{sk}(R^{-1})^j_l\stackrel{\eqref{eq:c3}}{=}\eta_{hi}{\Gamma}^{il}_k(R^{-1})^j_l,\label{eq:c22}
\end{equation} 
see \cite{ABLR21} and references therein. On the other hand, one can prove that the flat pencil of metrics $(g,\eta)$ defined above is not regular. To this end it suffices to note that 
\begin{equation}
R^i_j=\frac{d-1}{2}\delta^i_j+\nabla^\eta_j E^i,\label{eq:c33}
\end{equation}  
see, for example, \cite[Remark 5.7]{ABLR21}, which, in our case, entails
\begin{equation}
R^i_j=\frac{(j-1)}{n-1}\delta^i_j.\label{eq:c4}
\end{equation}
In fact, since {$d=1-\frac{2}{n-1}$, using the Dubrovin-Saito flat coordinates
\[
R^i_j=\frac{d-1}{2}\delta^i_j+\nabla_j^\eta E^i=-\frac{1}{n-1}\delta^i_j+\frac{j}{n-1}\delta^i_j=\frac{(j-1)}{n-1},
\]
see Remark \ref{rem:hom}. In spite our flat pencil of metrics is not regular, we will be able to prove the following
\begin{theorem}\label{thm:maint}
The flat pencil of metrics $g-\lambda \eta$ gives rise to a Dubrovin-Frobenius structure on $\mathbb{C}^n/B_n$ generalizing those computed explicitly for the cases $n=2,3,4$. 
\end{theorem}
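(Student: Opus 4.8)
The plan is to work throughout in the Dubrovin--Saito flat coordinates $(t^1,\dots,t^n)$ of Lemma~\ref{lem:DSflat}, where $\eta^{ij}=\delta_{i,n+1-j}$ is constant, the Christoffel symbols of $\eta$ vanish, $\deg t^k=2k$, the Euler field is $E=\tfrac{1}{n-1}\sum_k k\,t^k\partial_k$ (by Remark~\ref{rem:hom} and the degrees), and the charge is $d=1-\tfrac{2}{n-1}$. By Dubrovin's general correspondence an exact, homogeneous, Egorov flat pencil produces a Dubrovin--Frobenius structure \emph{once it is regular}; the pencil $(g,\eta)$ has all these properties, but by \eqref{eq:c4} the operator $R=\mathrm{diag}\bigl(0,\tfrac1{n-1},\dots,1\bigr)$ is not invertible, so \eqref{eq:c22} defines the would-be structure constants $c^j_{hk}$ only on the block $j=2,\dots,n$. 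The strategy is therefore: (i) recover $c^j_{hk}$ for $j\ge 2$ from \eqref{eq:c22}; (ii) observe that total symmetry of $c_{ijk}:=\eta_{il}c^l_{jk}$ fixes every remaining component \emph{except} $c_{nnn}=c^1_{nn}$, and determine the latter from the relation between $g$, the product and $E$; (iii) assemble a prepotential $F$ and verify the Dubrovin--Frobenius axioms, with the matching to $n=2,3,4$ coming for free.

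\textbf{The regular block.} For $j\ge 2$ one has $(R^{-1})^j_l=\tfrac{n-1}{l-1}\delta^j_l$, so \eqref{eq:c22} reads $c^j_{hk}=\tfrac{n-1}{j-1}\,\eta_{hi}\,\Gamma^{ij}_k$, which is a polynomial in $t$ since the contravariant Christoffel symbols $\Gamma^{ij}_k$ of $g$ are polynomial in the flat coordinates (this is Proposition~\ref{pro:contra} together with polynomiality of $g^{ij}$, Lemma~\ref{cor:invg}). Using flatness of $\eta$ and the flat-pencil relations one checks that these $c^j_{hk}$, $j\ge 2$, have the symmetries required of the structure constants of a Frobenius algebra on the subspace they cover; completing by total symmetry of $c_{ijk}$ then determines $c_{ijk}$ for every index triple but $(n,n,n)$, all of these being polynomial. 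The residual component $c_{nnn}$ is fixed in the next step, and it is the one that produces the logarithm.

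\textbf{The resonant component and the logarithm.} For any Dubrovin--Frobenius manifold one has $g^{ij}=E^k c^{ij}_k$ with $c^{ij}_k:=\eta^{il}c^j_{lk}$; taking $i=j=1$ and using $\eta^{1l}=\delta_{l,n}$ yields $g^{11}(t)=E^k\partial_k\!\left(\partial_n\partial_n F\right)$, where $\partial_n=\partial/\partial t^n$. Were $\partial_n^2F$ a polynomial, it would be homogeneous of weight $4n-4n=0$, hence constant, and $g^{11}$ would vanish; but $g^{11}(t)=n\neq 0$ by Remark~\ref{remark2:g11}. The minimal resolution is a logarithmic term: writing $F=F_{\mathrm{poly}}(t)+\tfrac{n-1}{2}\,(t^n)^2\ln t^n$ with $F_{\mathrm{poly}}$ homogeneous of weight $4n$, the polynomial part contributes nothing to $E^k\partial_k(\partial_n^2F)$ (its second $t^n$-derivative has weight $0$, hence is constant), while the logarithmic part contributes $E^n\cdot\tfrac{n-1}{t^n}=\tfrac{n}{n-1}t^n\cdot\tfrac{n-1}{t^n}=n$; so the coefficient $\tfrac{n-1}{2}$ is exactly the one making $g^{11}=n$. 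This forces $c_{nnn}=\partial_n^3F=\tfrac{n-1}{t^n}$, the unique non-polynomial structure constant, and it is the source of the logarithmic term in the prepotential; for $n=2,3,4$ the coefficient $\tfrac{n-1}{2}=\tfrac12,\,1,\,\tfrac32$ reproduces the terms $\pm\tfrac12u_2^2\ln u_2$, $u_3^2\ln u_3$, $\tfrac32 u_4^2\ln u_4$ in the prepotentials displayed above.

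\textbf{Verification and conclusion.} It remains to check the axioms of Definition~\ref{Frobeniusdef} for $(\eta,\circ,e,E)$, where $c_{ijk}=\partial_i\partial_j\partial_k F$ and $e$ is the constant field $\eta$-dual to $d\tau$ (in these coordinates $e=\partial/\partial t^{\,n-1}$, since $\tau$ has degree $4$): commutativity is immediate; $\nabla^\eta\eta=0$ because $\eta$ is constant here; $\eta(X\circ Y,Z)=\eta(X,Y\circ Z)$ holds by total symmetry of $c_{ijk}$; $[e,E]=e$ and $\mathrm{Lie}_E\eta=(2-d)\eta$ follow from exactness together with \eqref{eq:hometa}; and one verifies that $e$ is the unit, i.e.\ $\partial_{n-1}\partial_j\partial_k F=\eta_{jk}$. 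The one genuinely nontrivial point is associativity, the WDVV equations: on the block where all structure constants are polynomial this is the usual consequence of $(g,\eta)$ being a flat pencil, but the associativity identities that involve $c_{nnn}$ must be checked against the value $c_{nnn}=\tfrac{n-1}{t^n}$ found above. Controlling how the single non-polynomial structure constant interacts with the polynomial ones, and re-deriving WDVV without the regularity hypothesis, is the step I expect to be the main obstacle. Finally, specializing to $n=2,3,4$ the cometric \eqref{eq:c1} coincides with $g_{B_2},g_{B_3},g_{B_4}$, and the construction above depends only on $g$, on the choice $\eta=\mathrm{Lie}_{\partial/\partial u_{n-1}}g$, and on $e$; hence the prepotential it produces coincides with the ones listed, which establishes the ``generalizing'' clause of the statement.
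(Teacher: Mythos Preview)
Your overall strategy matches the paper's: work in Saito flat coordinates, define $c^i_{jk}$ for $i\ge 2$ via \eqref{eq:c22}, extend to $i=1$ by imposing the $\eta$-invariance (total symmetry of $c_{ijk}$), and fix the single missing component $c^1_{nn}=(n-1)/t^n$ so that the intersection-form identity $g^{ij}=E^kc^{ij}_k$ holds in the entry $(i,j)=(1,1)$. The identification of the logarithmic coefficient and the match with $n=2,3,4$ are correct.

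The genuine gap is exactly the one you flag: associativity in the directions touched by the non-regular index. You write that ``on the block where all structure constants are polynomial this is the usual consequence of $(g,\eta)$ being a flat pencil'', but that is not quite the block the flat-pencil argument delivers. The zero-curvature relation \eqref{C2} yields $R^j_jR^k_k\,\eta^{jh}\eta^{km}(c^i_{hs}c^s_{ml}-c^i_{ms}c^s_{hl})=0$, which gives associativity only when \emph{both} $h\ne n$ and $m\ne n$; the case $h=n$ (or $m=n$) is not covered, regardless of whether $c^1_{nn}$ actually occurs in the sum. The paper closes this gap by a case analysis that repeatedly uses the following trick: once associativity is known for all but one value of an index, contract that index with $E$ and use $c^i_{jk}E^k=g^{il}\eta_{lj}$ together with the identity $g^{is}c^l_{sm}=g^{ls}c^i_{sm}$ (which is proved separately from \eqref{eq:id4} and \eqref{eq:simc}) to reduce the remaining case to what is already established. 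Concretely, one first proves $c^i_{nl}c^l_{km}=c^i_{kl}c^l_{nm}$ for $i\ne 1$ by rewriting both sides through the $b$'s and using \eqref{eq:id1}, \eqref{eq:id3}; then handles $i=1$ by contracting over $i$ with $E^i$; and finally treats $m=n$ by contracting over $m$ with $E^m$. Without this contraction-with-$E$ device (or an equivalent), your proof is incomplete. A secondary point: several of your verifications (unit, $\eta$-invariance) are phrased in terms of a prepotential $F$ whose existence you have not yet established; it is cleaner to check $c^i_{jk}e^k=\delta^i_j$, $\eta_{is}c^s_{jk}=\eta_{js}c^s_{ik}$ and $\partial_sc^k_{jl}=\partial_lc^k_{js}$ directly from the definitions, and only then conclude that a prepotential exists.
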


The proof of this result will consist of the following steps:
\begin{enumerate}
\item[(i)] Definition of the structure constants of the product.
\item[(ii)] Proof of the commutativity of the product.
\item[(iii)] Existence of a flat unit vector field.
\item[(iv)] Identification of the metric $\eta$ with the invariant metric of the Dubrovin-Frobenius manifold.
\item[(v)] Identification of the cometric $g$ with the intersection form of the Dubrovin-Frobenius manifold.
\item[(vi)] Symmetry of the tensor $\nabla c$.
\item[(vii)] Associativity of the product.
\end{enumerate}
In all steps of the proof we will work in Saito flat coordinates. 
In order to prove the last step we will preliminarly prove that the functions
 \begin{equation}
b^{ij}_k=\big(1+d_j-\frac{d_F}{2}\big)c^{ij}_k,\label{eq:simeta}
\end{equation}
coincide with the contravariant Christoffel symbols of the cometric $g$. This will allows us to obtain part of the associativity conditions as a consequence of the vanishing of the curvature.

We start with a preliminary lemma: 
\begin{lemma} In Saito flat coordinates the contravariant symbols of the Levi-Civita of the metric $g$ satisfy 
\begin{eqnarray}
{\Gamma}^{n+1-h,k}_m&=&{\Gamma}^{n+1-m,k}_h,\label{eq:id1}\\
g^{is}\Gamma^{jk}_s&=&g^{js}\Gamma^{ik}_s,\label{eq:id4}\\
{\Gamma}^{ij}_s{\Gamma}^{sk}_l&=&{\Gamma}^{ik}_s{\Gamma}^{sj}_l,\label{eq:id2}\\
\f{{\Gamma}^{mh}_k}{R^h_h}&=&\f{{\Gamma}^{hm}_k}{R^m_m},\qquad (h,m)\ne (1,1).\label{eq:id3}
\end{eqnarray}
where $\Gamma^{jk}_i$ are the contravariant Christoffel of $g$ in Saito flat coordinates.
\end{lemma}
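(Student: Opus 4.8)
The plan is to work throughout in the Dubrovin--Saito flat coordinates $(t^1,\dots,t^n)$ of Proposition~\ref{prop:flatcoo} and Lemma~\ref{lem:DSflat}. There $\eta^{ij}=\delta_{i,n+1-j}$ is constant (so the Christoffel symbols of $\eta$ vanish), $g^{ij}$ is polynomial, homogeneous of weighted degree $i+j-2$ with the convention $\deg t^k=k$, and affine in $t^{n-1}$ with $\d_{n-1}g^{ij}=\eta^{ij}$; moreover $e=\d/\d t^{n-1}$, $\tau=t^2$, $E^i=\tfrac{i}{n-1}t^i$, and, from \eqref{eq:reg}, \eqref{eq:c4} and $E^i=\tfrac i{n-1}t^i$, $\nabla^g_jE^i=\tfrac{1-d}{2}\delta^i_j=\tfrac1{n-1}\delta^i_j$. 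The remark that will do most of the work is that, by the flat-pencil property (Proposition~\ref{pro:DubCox}), the symbols $\Gamma^{ij}_k$ are simultaneously the contravariant Christoffel symbols of $g$ and of $g-\lambda\eta$, for every $\lambda$.

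Identity \eqref{eq:id4} uses only metric compatibility: from $\Gamma^{jk}_s=-g^{ja}\Gamma^k_{as}$ (see \eqref{eq:c3}), with $\Gamma^k_{as}$ symmetric in $a,s$, we get $g^{is}\Gamma^{jk}_s=-g^{is}g^{ja}\Gamma^k_{as}$, manifestly symmetric in $i\leftrightarrow j$. For \eqref{eq:id2} I would exploit the pencil: separating powers of $\lambda$ in the contravariant form of the flatness condition for $g-\lambda\eta$ (see \cite{du97,DLectures}), the quadratic part $\Gamma^{ij}_s\Gamma^{sk}_l-\Gamma^{ik}_s\Gamma^{sj}_l$ is $\lambda$-independent while the part carrying the metric is affine in $\lambda$; the $\lambda$-linear term then forces $\eta^{is}(\d_s\Gamma^{jk}_l-\d_l\Gamma^{jk}_s)=0$, hence, $\eta$ being constant and non-degenerate in these coordinates, $\d_s\Gamma^{jk}_l=\d_l\Gamma^{jk}_s$ for all $s,l$ — so each $1$-form $\Gamma^{jk}_\bullet\,dt^\bullet$ is closed and equals $\d_\bullet\phi^{jk}$ on the chart. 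With the derivative terms gone, the $\lambda$-independent part of the identity reduces precisely to $\Gamma^{ij}_s\Gamma^{sk}_l=\Gamma^{ik}_s\Gamma^{sj}_l$, i.e.\ \eqref{eq:id2}.

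Identity \eqref{eq:id3} combines the potentials $\phi^{jk}$ just obtained with homogeneity. Metric compatibility gives $\phi^{ij}+\phi^{ji}=g^{ij}$ up to a constant, and since $\Gamma^{ij}_k$ is homogeneous of weighted degree $i+j-k-2$ (cf.\ Lemma~\ref{lemma6.13}) we may take $\phi^{ij}$ homogeneous of weighted degree $i+j-2$; the only degenerate case is $(i,j)=(1,1)$, where this degree vanishes, $\Gamma^{11}_k\equiv 0$ and $\phi^{11}$ is a constant. Rewriting $\nabla^g_jE^i=\tfrac1{n-1}\delta^i_j$ as $E^k\Gamma^i_{jk}=\tfrac{1-i}{n-1}\delta^i_j$ and contracting with $g$ via \eqref{eq:c3} yields $E^k\Gamma^{ij}_k=\tfrac{j-1}{n-1}g^{ij}=R^j_j\,g^{ij}$; applying the Euler identity to $\phi^{ij}$ this reads $\tfrac{i+j-2}{n-1}\phi^{ij}=\tfrac{j-1}{n-1}g^{ij}$, so $\phi^{ij}=\tfrac{j-1}{i+j-2}g^{ij}$ and $\Gamma^{ij}_k=\tfrac{j-1}{i+j-2}\d_kg^{ij}$ for $(i,j)\ne(1,1)$. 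Since $g^{ij}=g^{ji}$ this gives $\Gamma^{ij}_k/(j-1)=\d_kg^{ij}/(i+j-2)=\Gamma^{ji}_k/(i-1)$, which is \eqref{eq:id3}; the exclusion $(h,m)=(1,1)$ appears precisely where the denominator $i+j-2$ vanishes, the same degeneracy responsible for the non-regularity of the pencil.

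The hard part, as I see it, is \eqref{eq:id1}, the persymmetry $\eta_{ha}\Gamma^{ak}_m=\eta_{ma}\Gamma^{ak}_h$ (equivalently $\Gamma^{ak}_b=\Gamma^{n+1-b,k}_{n+1-a}$ for all indices): flatness, homogeneity and metric compatibility are not enough here, and one must genuinely use the Egorov property, i.e.\ that the \emph{same} $\tau$ realizes both $E^i=g^{is}\d_s\tau$ and $e^i=\eta^{is}\d_s\tau$ — which in these coordinates amounts to $g^{i2}=E^i=\tfrac{i}{n-1}t^i$. The case $k=2$ already exhibits the mechanism: from $E^i=g^{is}\d_s\tau$ and $\nabla^g_jE^i=\tfrac1{n-1}\delta^i_j$ one gets $\nabla^g_b\nabla^g_m\tau=\tfrac1{n-1}g_{bm}$, hence $\Gamma^{a2}_m=\tfrac1{n-1}\delta^a_m$ and $\eta_{ha}\Gamma^{a2}_m=\tfrac1{n-1}\eta_{hm}$, symmetric in $h,m$. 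For general $k$ I would either run an induction on the second upper index, with $k=2$ as the base, using a recursion for the columns $g^{\bullet k}$ coming from the bi-Hamiltonian pair $(g,\eta)$, or else pass to the flat coordinates $q_i=p_i^2$ of $g$ (in which $g^{ij}(q)=4(1-\delta^{ij})$ is constant, cf.\ Proposition~\ref{prop:gflatness}), express $\Gamma^{ij}_k$ through \eqref{trans2.eq} with vanishing source symbols in terms of the derivatives of the symmetric functions $t^a(q)$, and use their $S_n$-symmetry together with the anti-diagonal normalization of $\eta$ to read off the claimed symmetry. Making either route rigorous in the present non-regular setting is where I expect the genuine difficulty to lie.
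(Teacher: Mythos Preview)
Your derivations of \eqref{eq:id4}, \eqref{eq:id2} and \eqref{eq:id3} are correct and, unlike the paper (which simply quotes the four identities for the tensor $\Delta^{jk}_m$ from \cite{du97,ABLR21} and then notes that in Saito flat coordinates $\Delta=\Gamma$), you actually re-prove them from first principles via the $\lambda$-expansion of the pencil and Euler homogeneity. That is a genuinely different and more self-contained route.

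The gap is \eqref{eq:id1}. You diagnose it as requiring the Egorov property and then leave it open; but in fact it falls to the \emph{same} pencil trick you already used for \eqref{eq:id2}, applied to the Levi-Civita compatibility condition rather than to the curvature. By definition of flat pencil the contravariant symbols of $g_\lambda=g+\lambda\eta$ are $\Gamma^{ij}_{(g)\,k}+\lambda\Gamma^{ij}_{(\eta)\,k}$, and since they are the contravariant Levi-Civita symbols of $g_\lambda$ one has, for every $\lambda$,
\[
(g^{is}+\lambda\eta^{is})(\Gamma^{jk}_{(g)\,s}+\lambda\Gamma^{jk}_{(\eta)\,s})=(g^{js}+\lambda\eta^{js})(\Gamma^{ik}_{(g)\,s}+\lambda\Gamma^{ik}_{(\eta)\,s}).
\]
The $\lambda^0$-term is your \eqref{eq:id4}. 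In Saito flat coordinates $\Gamma^{ij}_{(\eta)\,k}=0$, so the $\lambda^1$-term reads $\eta^{is}\Gamma^{jk}_s=\eta^{js}\Gamma^{ik}_s$; contracting with $\eta_{ai}\eta_{bj}$ (and using $\eta_{ij}=\delta_{i,n+1-j}$) this is exactly $\Gamma^{n+1-b,k}_a=\Gamma^{n+1-a,k}_b$, i.e.\ \eqref{eq:id1}. No Egorov input, no induction, no passage to the $q$-coordinates is needed. Your sketched routes (inducting on $k$ from the $k=2$ case, or working in the flat $q_i=p_i^2$ coordinates) could perhaps be made to work, but they are unnecessary detours around a one-line argument you already have in hand.
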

\begin{proof}
The following identities hold true (see \cite{du97} and \cite{ABLR21}):
\begin{eqnarray}
\eta_{hs}\Delta^{sk}_m&=&\eta_{ms}\Delta^{sk}_h,\\
g^{is}\Delta^{jk}_s&=&g^{js}\Delta^{ik}_s,\\
\Delta^{ij}_s \Delta^{sk}_l &=& \Delta^{ik}_s \Delta^{sj}_l,\\
\Delta^{tl}_k(R^{-1})^s_l&=&\Delta^{sl}_k(R^{-1})^t_l.
\end{eqnarray}
where the tensor $\Delta^{jk}_m$ is given in terms of the Levi-Civita connections $\nabla^{\eta}$ and $\nabla^{g}$ by the  formula
\[
\Delta^{jk}_m=\eta_{lm}\left(\eta^{js}\Gamma^{lk}_{(g)s}-g^{sl}\Gamma^{jk}_{(\eta)s}\right)=\eta_{lm}\left(\eta^{ls}\Gamma^{jk}_{(g)s}-g^{js}\Gamma^{lk}_{(\eta)s}\right).
\]
In Saito flat coordinates $\Gamma^{jk}_{(g)i}=\Gamma^{jk}_{i}$, $\Gamma^{jk}_{(\eta)i}=0$, $\Delta^{jk}_i=\Gamma^{jk}_i$, $\eta_{ij}=\delta_{i,n+1-j}$ and the above identities reduce to identities (\ref{eq:id1},\ref{eq:id4},\ref{eq:id2},\ref{eq:id3}).
\end{proof}

\subsection{Step 1: definition of the $c^{i}_{jk}$s} As we have already mentioned, the definition of the Dubrovin-Frobenius structure on $\mathbb C^n/B_n$ cannot \emph{completely} hinge on \eqref{eq:c11} since the endomorphism $R$ defined in \eqref{eq:c4} in not invertible. On the other hand, the loss of information is restricted to the case $R^i_j=0$, i.e. $i=j=1$, see Formula \eqref{eq:c4}. In this way, Formula \eqref{eq:c22} permits to fix all the $c^{i}_{jk}$s, but the ones with $i=1$. In other words, for all $i\neq 1$
\begin{equation}\label{eq:cons1}
c^{i}_{jk}:=\frac{\eta_{jh}{\Gamma}^{hi}_k}{R^i_i}.
\end{equation}
Note that one has that 
\begin{equation}
c^{i}_{jk}=\frac{{\Gamma}^{n+1-j,i}_k}{R^i_i}\stackrel{\eqref{eq:id1}}{=}\frac{{\Gamma}^{n+1-k,i}_j}{R^i_i}.\label{eq:cons2}
\end{equation}
Both equalities follow since we are working with the Dubrovin-Saito coordinates. In particular, the first equality follows from the form of the metric $\eta$ when written in these coordinates, i.e. $\eta_{ij}=\delta_{i,n+1-j}$, see Lemma \ref{lem:DSflat}. The remaining $c^i_{jk}$s will be defined via the following:
\begin{align}
c^1_{ij}&:=c^{n+1-j}_{ni},\qquad\forall (i,j)\neq (n,n);\label{eq:cons3}\\
c^1_{nn}&:=\frac{(n-1)}{t_n}.\label{eq:cons4}
\end{align}
The structure constants $c^k_{ij}$s defined in \eqref{eq:cons1}, \eqref{eq:cons3} and \eqref{eq:cons4}, are homogeneous polynomials of the $p$-variables of degree $2(n-1+k-i-j)$, see (the end of the proof of) Lemma \ref{lemma6.13}. In particular, note that, with the exception of $c^1_{nn}$, 
\begin{equation}\label{eq:ineq}
c^k_{ij}=0,
\end{equation}
for all $i,j,k$ such that $i+j>n+k-1$. Now we have to prove that the structure constants defined above satisfy all the conditions
 entering the definition of Dubrovin-Frobenius manifolds.

\begin{remark}\label{rem:degnor}
Hereafter we will normalize the degree of the $p$-homogeneous polynomials by $\frac{1}{2(n-1)}$ accordingly with the expression of Euler vector field, see \eqref{tau6}. In other words, we will set
 \begin{equation}\label{eq:degh}
d_k:=\text{deg}\;(f_k)=\frac{k}{n-1},
\end{equation}
where $f_k$ is any degree $2k$, homogeneous polynomial in the $p$-variables. For example
\begin{equation}
d_{n-1+k-i-j}:=\text{deg}\;(c^k_{ij})=\frac{n-1+k-i-j}{n-1},\label{eq:degc}
\end{equation}
and $d_{i+j-2}:=\text{deg}\;(g^{ij}(u))=\frac{i+j-2}{n-1}$, see \eqref{eq:degu}.
\end{remark}

\subsection{Step 2: commutativity of the product}
We have to prove that for all $i,j,k=1,\dots,n$,
\begin{equation}
c^i_{jk}=c^i_{kj}.\label{eq:commc}
\end{equation}
For $i\neq 1$ this follows automatically from \eqref{eq:cons2}. For $i=1$, it follows from \eqref{eq:cons3}.

\subsection{Step 3: existence of a flat unit vector field} We now prove that the unit of the
 product defined above is the vector field $e=\f{\partial}{\partial u^{n-1}}$, that is
\[c^i_{jk}e^k=\delta^i_j,\qquad\forall i,j=1,\dots,n.\]
For $i\ne 1$ this follows from the results for regular quasihomogeneous pencil \cite{du97}.
 For $i=1$ we have
\[c^1_{jk}e^k=c^1_{j,n-1},\qquad\forall j=1,\dots,n.\]
This means that we have to prove the identities
\begin{eqnarray*}
c^1_{1,n-1}&=&1,\\
c^1_{j,n-1}&=& 0,\qquad\forall j=2,\dots,n.
\end{eqnarray*}
We observe that the functions $c^1_{j,n-1}$ are homogeneous polynomials of the $p$-variables of degree $2(1-j)$. Thus for $j\ne 1$ they vanish. For $j=1$ we have
\[c^1_{1,n-1}\stackrel{\eqref{eq:cons3}}{=}c^n_{n,n-1}=c^n_{nk}e^k=\delta^n_n,\]
where the last equality follows from the fact that $c^i_{jk}e^k=\delta^i_j$ for $i\neq 1$.
It is immediate to check that $\nabla^{\eta} e=0$. Indeed, since $u^n$ is flat, the
 passage from the coordinates $(u^1,\dots,u^n)$ to the flat basic invariants does not
 affect the form of $e$ that remains constant in the new coordinates.

\subsection{Step 4: Identification of the metric $\eta$ with the invariant metric.}
We need a preliminary lemma.

\begin{lemma}
For all $i,j,k=1,\dots,n$
\begin{equation}
c^i_{jk}=c^{n+1-k}_{n+1-i,j}=c^{n+1-j}_{n+1-i,k}.\label{eq:simc}
\end{equation}
\end{lemma}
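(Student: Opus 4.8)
The plan is to prove the symmetry \eqref{eq:simc} by exploiting the degree identity \eqref{eq:degc} together with the already-established identities \eqref{eq:id1}--\eqref{eq:id3} and the definition of the $c^i_{jk}$s in \eqref{eq:cons1}, \eqref{eq:cons3}, \eqref{eq:cons4}. Because of the commutativity already proven in Step 2, the two claimed equalities $c^i_{jk}=c^{n+1-k}_{n+1-i,j}$ and $c^i_{jk}=c^{n+1-j}_{n+1-i,k}$ are equivalent, so it suffices to prove one of them, say $c^i_{jk}=c^{n+1-j}_{n+1-i,k}$, for all index triples; the other then follows by swapping $j\leftrightarrow k$.

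First I would dispose of the generic case $i\neq 1$ and $n+1-j\neq 1$, i.e. $j\neq n$. Here both sides are given by \eqref{eq:cons1}: $c^i_{jk}=\eta_{jh}\Gamma^{hi}_k/R^i_i = \Gamma^{n+1-j,i}_k/R^i_i$ and $c^{n+1-j}_{n+1-i,k}=\Gamma^{n+1-(n+1-i),n+1-j}_k/R^{n+1-j}_{n+1-j}=\Gamma^{i,n+1-j}_k/R^{n+1-j}_{n+1-j}$. So the desired identity reduces to
\[
\frac{\Gamma^{n+1-j,\,i}_k}{R^i_i}=\frac{\Gamma^{i,\,n+1-j}_k}{R^{n+1-j}_{n+1-j}},
\]
which is exactly \eqref{eq:id3} with $(m,h)=(i,n+1-j)$ — valid precisely because $(i,n+1-j)\neq(1,1)$, which is our case assumption. (One uses here that $R$ is diagonal, $R^i_i=(i-1)/(n-1)$, so $R^h_h$ in \eqref{eq:id3} is literally the scalar factor appearing in \eqref{eq:cons1}.)

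It remains to handle the boundary cases where one of the upper indices equals $1$. If $i=1$ (and $j\neq n$ so $n+1-j\neq 1$): by \eqref{eq:cons3}, $c^1_{jk}=c^{n+1-k}_{nj}$; I want this to equal $c^{n+1-j}_{nk}$, which by \eqref{eq:cons3} again equals $c^1_{kj}$, and $c^1_{jk}=c^1_{kj}$ holds by Step 2. Symmetrically one treats $n+1-j=1$, i.e. $j=n$, with $i\neq 1$, using \eqref{eq:cons3} to rewrite $c^{n+1-j}_{n+1-i,k}=c^1_{n+1-i,k}=c^{n+1-k}_{n,n+1-i}$ and comparing with $c^i_{nk}$; for this one needs the case-$i\neq 1$ identity already proven applied to $c^{n+1-k}_{n,n+1-i}$, unwinding the indices. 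Finally the fully degenerate corner $i=1$, $j=n$ forces $k=1$ by the vanishing \eqref{eq:ineq} on both sides unless the surviving term is $c^1_{nn}$; here one checks directly from \eqref{eq:cons4} that $c^1_{nn}=(n-1)/t_n$ is symmetric under the claimed operation since $n+1-i=n$, $n+1-j=1$ sends $c^1_{nn}\mapsto c^1_{nn}$. The main obstacle I anticipate is bookkeeping: making sure in the mixed boundary cases that the index after applying \eqref{eq:cons3} again lands in the range where the generic identity (or \eqref{eq:id3}) applies, and tracking which reindexed structure constant is zero by degree \eqref{eq:degc}; the degree count $\mathrm{deg}(c^k_{ij})=(n-1+k-i-j)/(n-1)$ is invariant under $(i,j,k)\mapsto(n+1-k,j,n+1-i)$ and under $(i,j,k)\mapsto(n+1-j,k,n+1-i)$, which is the consistency check that the statement is even plausible and which I would verify first.
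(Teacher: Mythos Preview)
Your approach is essentially the same as the paper's: a case analysis driven by whether the upper index equals $1$ and whether a lower index equals $n$, using \eqref{eq:cons1}--\eqref{eq:cons4} to unpack the definitions and the Christoffel identities \eqref{eq:id1}--\eqref{eq:id3} to compare. One small organizational difference: by choosing to prove $c^i_{jk}=c^{n+1-j}_{n+1-i,k}$ (the equality that fixes $k$) rather than the paper's $c^i_{jk}=c^{n+1-k}_{n+1-i,j}$, your generic case $i\neq 1$, $j\neq n$ reduces directly to \eqref{eq:id3} alone, whereas the paper's generic case $i\neq 1$, $k\neq n$ requires both \eqref{eq:id3} and \eqref{eq:id1}. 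This is a mild simplification.

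One omission to patch: in your boundary case $j=n$, $i\neq 1$, your reduction $c^{n+1-k}_{n,n+1-i}=c^i_{nk}$ via the already-proven generic identity requires the upper index $n+1-k\neq 1$, i.e.\ $k\neq n$. The remaining sub-case $i\neq 1$, $j=k=n$ is not covered; here both $c^i_{nn}$ and $c^1_{n+1-i,n}$ vanish by \eqref{eq:ineq} (since $i\neq 1$), and the paper disposes of it exactly this way. You flag this as bookkeeping, but it should be stated explicitly. Also, your ``fully degenerate corner'' $i=1$, $j=n$ is in fact trivial for every $k$ (not just via vanishing): both sides are literally $c^1_{nk}$.
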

\begin{proof}
The case $i=1$, and $(j,k)=(n,n)$ is trivial. If $i=1$ and $(j,k)\neq (n,n)$, then
\[
c^1_{jk}\stackrel{\eqref{eq:cons3}}{=}c^{n+1-k}_{nj},
\]
which coincides with the first of the \eqref{eq:simc}. The second one holds true because of the symmetry of the lower indices of the $c^i_{jk}$s, Formula \eqref{eq:commc}. If $i\neq 1$ and $k\neq n$ then
\[
c^i_{jk}\stackrel{\eqref{eq:cons2}}{=}\frac{{\Gamma}^{n+1-j,i}_k}{R^i_i},
\]
and
\[
c^{n+1-k}_{n+1-i,j}\stackrel{\eqref{eq:cons2}}{=}\frac{{\Gamma}^{i,n+1-k}_{j}}{R^{n+1-k}_{n+1-k}}\stackrel{\eqref{eq:id3}}{=}\frac{{\Gamma}^{n+1-k,i}_{j}}{R^{i}_{i}}\stackrel{\eqref{eq:id1}}{=}\frac{{\Gamma}^{n+1-j,i}_{k}}{R^{i}_{i}}\stackrel{\eqref{eq:cons2}}{=}c^i_{jk}.
\]
On the other hand, if $i\neq 1$, $k=n$ and $j\neq n$
\[
c^{n+1-k}_{n+1-i,j}=c^1_{n+1-i,j}\stackrel{\eqref{eq:cons3}}{=}c^{n+1-j}_{n,n+1-i}\stackrel{\eqref{eq:cons2}}{=}\frac{{\Gamma}^{i,n+1-j}_{n}}{R^{n+1-j}_{n+1-j}}\stackrel{\eqref{eq:id3}}{=}\frac{{\Gamma}^{n+1-j,i}_{n}}{R^{i}_{i}}\stackrel{\eqref{eq:cons2}}{=}c^i_{jn}.
\]
Finally if $i\neq 1$ and $(j,k)=(n,n)$, then the three terms of the identity are zero, see \eqref{eq:ineq}.
\end{proof}
We have now all the ingredients to prove that
\begin{equation}
\eta_{is}c^s_{jk}=\eta_{js}c^s_{ik}.\label{eq:inveta1}
\end{equation}
This follows at once from \eqref{eq:simc} and from $\eta_{ij}=\delta_{i,n+1-j}$. In fact
\[
\eta_{is}c^s_{jk}=c^{n+1-i}_{jk}\stackrel{}{=}c^{n+1-j}_{ik}=\eta_{js}c^s_{ik}.
\]

\subsection{Step 5: identification of the cometric $g$ with the intersection form.} We will now prove the identity
\begin{equation}
c^i_{jk}E^k=g^{il}\eta_{lj};\label{eq:affinor}
\end{equation}
which amounts to say that the operator of multiplication by the Euler vector field $E$, defined via the \eqref{eq:cons3},  \eqref{eq:cons4} is the affinor defined composing (the covariant metric) $\eta$ with (the contravariant metric) $g$.
To prove \eqref{eq:affinor}, we write $E=E^i\partial_i$ and first we observe that \eqref{eq:reg} entails
\begin{equation}\label{eq:affinor1}
R^i_j=({\nabla^\eta}_{j}E^i-{\nabla^g}_{j}E^i)=-{\Gamma}^{i}_{jl}E^l,
\end{equation}
which, for $i\neq 1$, yields
\[c^i_{jl}E^l\stackrel{\eqref{eq:cons1}}{=}\f{1}{R^i_i}\eta_{jl}{\Gamma}^{li}_kE^k\stackrel{\eqref{eq:c33}}{=}-\f{1}{R^i_i}\eta_{jl}g^{ls}{\Gamma}^{i}_{sk}E^k\stackrel{\eqref{eq:affinor1}}{=}\f{1}{R^i_i}\eta_{jl}g^{ls}R^i_s\stackrel{\eqref{eq:c4}}{=}\eta_{jl}g^{li}.\]
On the other hand, the case $i=1$ and $j\ne n$ can be reduced to the previous one. In fact
\[c^1_{jl}E^l\stackrel{\eqref{eq:cons3}}{=}c^{n+1-j}_{nl}E^l=g^{n+1-j,l}\eta_{ln}=g^{n+1-j,1}=g^{1l}\eta_{lj},\]
where the other equalities follow from the case $i\neq 1$ and from the explicit form of $\eta$.
Finally, if $i=1$ and $j=n$:
\[c^1_{nl}E^l\stackrel{\eqref{eq:ineq}}{=}c^{1}_{nn}E^n\stackrel{\text{Remark}\;\ref{rem:hom}}{=}c^1_{nn}d_nu^n\stackrel{\eqref{eq:cons4}}{=}\f{n-1}{u^n}\f{n}{n-1}u^n=n=g^{11}=g^{1l}\eta_{ln}.\]
Note that in the first equality we used the explicit form of the Euler vector field, in the fifth the normalization of $g$ (see Remark \ref{remark2:g11}) and in the last the explicit form of $\eta$. 
\newline
\newline
The identity \eqref{eq:affinor} implies
\begin{equation}\label{eq:intf}
g^{ih}=c^i_{jk}E^k\eta^{jh}=c^h_{jk}E^k\eta^{ji}.
\end{equation}
In other words the cometric $g$ can be identified with the intersection form.
\newline
\newline
We prove now an useful identity that we will use later. 
\begin{lemma}
\begin{equation}
g^{is}c^l_{sm}=g^{ls}c^i_{sm},\label{eq:ginv}
\end{equation}
for all $s,m,l=1,\dots,n$.
\end{lemma}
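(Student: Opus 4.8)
The plan is to verify \eqref{eq:ginv} componentwise in the Dubrovin--Saito flat coordinates, reducing it to the algebraic identities \eqref{eq:id1}, \eqref{eq:id3}, \eqref{eq:id4} for the contravariant Christoffel symbols of $g$, together with the defining formulas \eqref{eq:cons1}--\eqref{eq:cons4} for the $c^i_{jk}$, the commutativity \eqref{eq:commc}, and the identification \eqref{eq:intf} of $g$ with the intersection form. Since \eqref{eq:ginv} is symmetric under $i\leftrightarrow l$ and trivially true for $i=l$, it is enough to treat: (a) both $i\neq1$ and $l\neq1$; (b) $i=1$, $l\neq1$.

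In case (a) both $c^i_{sm}$ and $c^l_{sm}$ are given directly by \eqref{eq:cons2}, and the claim follows from the chain
\[
g^{ls}c^i_{sm}
\stackrel{\eqref{eq:cons2}}{=}\f{1}{R^i_i}\,g^{ls}\Gamma^{n+1-m,i}_{s}
\stackrel{\eqref{eq:id4}}{=}\f{1}{R^i_i}\,g^{n+1-m,s}\Gamma^{li}_{s}
\stackrel{\eqref{eq:id3}}{=}\f{1}{R^l_l}\,g^{n+1-m,s}\Gamma^{il}_{s}
\stackrel{\eqref{eq:id4}}{=}\f{1}{R^l_l}\,g^{is}\Gamma^{n+1-m,l}_{s}
\stackrel{\eqref{eq:cons2}}{=}g^{is}c^l_{sm},
\]
where the application of \eqref{eq:id3} is legitimate because $(i,l)\neq(1,1)$.

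In case (b) I would split according to whether $m\neq n$ or $m=n$. If $m\neq n$, then $(s,m)\neq(n,n)$ for every $s$, so \eqref{eq:cons3} gives $c^1_{sm}=c^{n+1-m}_{ns}$; combining this with \eqref{eq:commc} and case (a) applied to the non-special index $n+1-m\neq1$, the right-hand side becomes $g^{n+1-m,s}c^l_{sn}$, and the equality $g^{1s}c^l_{sm}=g^{n+1-m,s}c^l_{sn}$ is then obtained by a single application of \eqref{eq:id4} exactly as in the chain above (using $c^l_{sn}=\Gamma^{1,l}_s/R^l_l$). The only genuinely singular situation is $i=1$, $l\neq1$, $m=n$: here the homogeneity of the $c^k_{ij}$ recorded just before \eqref{eq:ineq} forces $c^1_{sn}=0$ for $s\neq n$, while $c^1_{nn}$ is the hand-picked value \eqref{eq:cons4}, so the right-hand side collapses to $\f{n-1}{t_n}\,g^{ln}$; I would match this against $g^{1s}c^l_{sn}=\f{1}{R^l_l}\,g^{1s}\Gamma^{1l}_s$ using \eqref{eq:affinor}/\eqref{eq:intf} (which give $g^{ln}=c^l_{1k}E^k$), the normalization $g^{11}=n$ of Remark \ref{remark2:g11}, and the explicit homogeneous Euler field of Remark \ref{rem:hom}, in particular $E^n=\f{n}{n-1}t_n$.

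The computational heart is the short chain of case (a); the main obstacle is precisely the corner $i=1$, $m=n$, which is exactly where the non-regularity of the pencil ($R^1_1=0$, see \eqref{eq:c4}) together with the fact that $c^1_{nn}$ is \emph{not} produced by the general formula \eqref{eq:cons1} forces a separate and somewhat delicate check that the ad hoc definition \eqref{eq:cons4} is consistent with \eqref{eq:ginv}.
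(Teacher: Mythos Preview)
Your decomposition into case (a) ($i,l\neq 1$, any $m$) and case (b) ($i=1$, $l\neq 1$) is correct and in fact slightly neater than the paper's, since your chain in case (a) also handles $m=n$ directly when neither upper index is $1$; the paper instead fixes $m\neq n$ first and only later deals with $m=n$. The chains themselves differ only in the order in which \eqref{eq:id3} and \eqref{eq:id4} are invoked, and both are valid. Your sub-case $i=1$, $l\neq 1$, $m\neq n$ is also fine.

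The one place where your sketch does not quite close is the corner $i=1$, $l\neq 1$, $m=n$. You propose to match $\frac{1}{R^l_l}\,g^{1s}\Gamma^{1l}_s$ against $\frac{n-1}{t_n}\,g^{ln}$ directly, but none of the listed ingredients (\eqref{eq:id1}--\eqref{eq:id4}, \eqref{eq:affinor}, $g^{11}=n$, $E^n=\frac{n}{n-1}t_n$) yields the required identity $g^{1s}\Gamma^{1l}_s=\frac{l-1}{t_n}\,g^{nl}$ in one step; the symbols $\Gamma^{1l}_s$ are not otherwise determined in Saito coordinates, and neither \eqref{eq:id4} nor \eqref{eq:id3} moves the repeated superscript $1$ anywhere useful. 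The paper avoids this by contracting the \emph{difference} with the Euler field: since the cases $k\neq n$ are already proven,
\[
\bigl(g^{1s}c^l_{sn}-g^{ls}c^1_{sn}\bigr)E^n=\sum_{k}\bigl(g^{1s}c^l_{sk}-g^{ls}c^1_{sk}\bigr)E^k
\stackrel{\eqref{eq:affinor}}{=}g^{1s}g^{lr}\eta_{rs}-g^{ls}g^{1r}\eta_{rs}=0,
\]
and one divides by $E^n=d_n t_n\neq 0$. This uses exactly the tools you list, but applied to the difference rather than to each side separately; with that modification your argument is complete and essentially coincides with the paper's.
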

\begin{proof}
If $m\ne n$ and $l\ne 1$ (any $i$) 
\[g^{is}c^l_{sm}\stackrel{\eqref{eq:cons2}}{=}g^{is}\f{\Gamma^{n+1-m,l}_s}{R^l_l}\stackrel{\eqref{eq:id3}}{=}g^{is}\f{\Gamma^{l,n+1-m}_s}{R^{n+1-m}_{n+1-m}}\stackrel{\eqref{eq:id4}}{=}g^{ls}\f{\Gamma^{i,n+1-m}_s}{R^{n+1-m}_{n+1-m}}
\stackrel{\eqref{eq:cons2}}{=}g^{ls}c^{n+1-m}_{n+1-i,s}\stackrel{\eqref{eq:simc}}{=}g^{ls}c^i_{ms}.\]
If $m\ne n$, $l=1$ and $i\ne 1$ (note that if $i=1$ the identity is trivially verified)
\[g^{is}c^1_{sm}\stackrel{\eqref{eq:cons3}}{=}g^{is}c^{n+1-m}_{ns}\stackrel{\eqref{eq:cons2}}{=}g^{is}\f{\Gamma^{1,n+1-m}_s}{R^{n+1-m}_{n+1-m}}\stackrel{\eqref{eq:id4}}{=}g^{1s}\f{\Gamma^{i,n+1-m}_s}{R^{n+1-m}_{n+1-m}}\stackrel{\eqref{eq:cons2}}{=}g^{1s}c^{n+1-m}_{n+1-i,s}\stackrel{\eqref{eq:simc}}{=}g^{1s}c^i_{sm}.\]
If $m=n$, $l=1$ and $i=1$ \eqref{eq:ginv} is trivally true.
On the other hand, if $m=n$, $l=1$ and $i\neq 1$ we have
\begin{align*}
(g^{1s}c^i_{sn}-g^{is}c^{1}_{sn})E^n&=(g^{1s}c^i_{sk}-g^{is}c^{1}_{sk})E^k\nonumber\\
&=g^{1s}g^{ir}\eta_{rs}-g^{is}g^{1r}\eta_{rs}\label{eq:gp}\\
&=0,
\end{align*}
and this implies $g^{1s}c^i_{sn}-g^{is}c^{1}_{sn}=0$ since $E^n=d_nu^n$.  
The first equality follows from \eqref{eq:affinor} and from the fact that \eqref{eq:ginv} holds true if $m\ne n$, $l=1$ and $i\ne 1$, see the previous computation. On the other hand, the last equality is obtained trading $r$ with $s$ in (for example) the second summand. Finally, since $i$ and $l$ appear symmetrically in \eqref{eq:ginv}, the case $m=n$, $i=1$ and $i\neq 1$ follows from the previous computation simply exchanging the role of $i$ and $l$.
\end{proof}

\subsection{Step 6: symmetry of $\nabla c$}
In Saito flat coordinates the vanishing of the curvature of the pencil implies
\begin{equation}
\partial_s\Gamma^{jk}_l=\partial_l\Gamma^{jk}_s,\label{eq:simgamma}
\end{equation}
for all $s,j,k,l=1,\dots,n$, where $\Gamma^{ij}_k$ denote the contravariant Christoffel symbols of the metric $g$, 
see \cite{du97}. This observation entails that
\begin{proposition}
\begin{equation}\label{symnablac}
\partial_sc^{k}_{jl}=\partial_l c^{k}_{js},\,\forall s,j,k,l=1,\dots,n.
\end{equation}
\end{proposition}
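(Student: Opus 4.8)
The plan is to prove \eqref{symnablac} case by case according to the definition of the $c^k_{jl}$ given in Step~1, reducing every case to the single identity \eqref{eq:simgamma}, i.e.\ $\partial_s\Gamma^{jk}_l=\partial_l\Gamma^{jk}_s$, which holds for all indices. First I would dispose of the generic row $k\neq 1$. By \eqref{eq:cons1} together with $\eta_{jh}=\delta_{h,n+1-j}$ one has $c^k_{jl}=\Gamma^{n+1-j,k}_l/R^k_k$, where by \eqref{eq:c4} the denominator $R^k_k=(k-1)/(n-1)$ is a nonzero constant; hence
\[
\partial_s c^k_{jl}=\frac{\partial_s\Gamma^{n+1-j,k}_l}{R^k_k}\stackrel{\eqref{eq:simgamma}}{=}\frac{\partial_l\Gamma^{n+1-j,k}_s}{R^k_k}=\partial_l c^k_{js},
\]
which settles all $k\neq 1$.

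For the row $k=1$ the uniform formula \eqref{eq:c22} is unavailable, since $R^1_1=0$; the key point is that for $j\neq n$ the constant $c^1_{jl}$ can nevertheless be written as a single contravariant Christoffel symbol of $g$ divided by a nonzero constant. Indeed, the symmetry relation \eqref{eq:simc} (with $i=1$) gives $c^1_{jl}=c^{n+1-j}_{n,l}$, and since $n+1-j\neq 1$, applying \eqref{eq:cons1} together with $\eta_{nh}=\delta_{h1}$ yields $c^1_{jl}=\Gamma^{1,n+1-j}_l/R^{n+1-j}_{n+1-j}$, whose denominator equals $(n-j)/(n-1)\neq 0$. The same one-line computation as above, now using \eqref{eq:simgamma} in the form $\partial_s\Gamma^{1,n+1-j}_l=\partial_l\Gamma^{1,n+1-j}_s$, gives $\partial_s c^1_{jl}=\partial_l c^1_{js}$ for all $s,l$ and all $j\neq n$.

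The only case left is $k=1$, $j=n$, that is $\partial_s c^1_{nl}=\partial_l c^1_{ns}$, which I would check directly using \eqref{eq:ineq} and \eqref{eq:cons4}: there $c^1_{nl}=0$ for $l=1,\dots,n-1$, while $c^1_{nn}=(n-1)/t_n$ depends only on the coordinate $t_n$. Consequently both members of $\partial_s c^1_{nl}=\partial_l c^1_{ns}$ vanish unless $l=s=n$, and when $l=s=n$ both members equal $\partial_n c^1_{nn}=-(n-1)/t_n^2$; so the identity holds in all cases.

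I expect the only genuine subtlety to be precisely this interplay. The non-regularity of the pencil is what forced the ad hoc definitions \eqref{eq:cons3}--\eqref{eq:cons4} and removed the row $c^1_{jk}$ from the scope of the uniform formula \eqref{eq:c22}; it is then the symmetry relations \eqref{eq:simc} that make it possible to still present $c^1_{jl}$, for $j\neq n$, as a constant multiple of a single $\Gamma^{hm}_k$, and the leftover components $c^1_{nl}$ are essentially trivial thanks to \eqref{eq:ineq} and the explicit shape of $c^1_{nn}$. Once this is arranged, the statement is a direct consequence of the flatness input \eqref{eq:simgamma}, so no further computation is required.
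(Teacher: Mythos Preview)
Your proof is correct and follows essentially the same strategy as the paper: treat $k\neq 1$ uniformly via \eqref{eq:cons1} and \eqref{eq:simgamma}, then for $k=1$ rewrite $c^1_{jl}$ as $c^{n+1-j}_{nl}$ to reduce to the case already handled, disposing of the leftover components $c^1_{n\,\cdot}$ directly from \eqref{eq:ineq} and \eqref{eq:cons4}. Your case split for $k=1$ according to $j\neq n$ versus $j=n$ is in fact slightly cleaner than the paper's split according to $(j,l)=(n,n)$ versus $(j,l)\neq(n,n)$, since the paper's chain of equalities in the latter case tacitly requires $j\neq n$ to avoid being circular, whereas your argument covers $j=n$, $l\neq n$ explicitly.
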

\begin{proof}
If $k\neq 1$, then \eqref{symnablac} follows from the definition of the structure constants. In fact in this case $c^{k}_{jl}=\frac{\eta_{jr}\Gamma^{rk}_l}{R^{k}_k}$, where $\frac{\eta_{jr}}{R^k_k}$ are constants. 
If $k=1$ and $(j,l)=(n,n)$, the right-hand side of \eqref{symnablac} is zero unless $s=n$ when this identity is trivially true. If $s\neq n$, then also the left-hand side of \eqref{symnablac} is zero since $n+s>n$.
Finally, if $k=1$ and $(j,n)\neq (n,n)$, then
\[
\partial _s c^1_{jl}=\partial_s c^1_{lj}=\partial_s c^{n+1-j}_{nl}=\partial_l c^{n+1-j}_{ns}
=\partial_lc^1_{sj}=\partial_l c^1_{js}.
\]
\end{proof}

\subsection{Interlude: structure constants of the product and Christoffel symbols} Let $d_F=3-d=2+\frac{2}{n-1}$ and let 
\begin{equation}
c^{ij}_k:=\eta^{is}c_{sk}^j\label{eq:calti}
\end{equation}
for all $i,j,k$, where the $c^j_{sk}$s were defined in \eqref{eq:cons1}, \eqref{eq:cons3} and \eqref{eq:cons4}. Let
\begin{equation}
b^{ij}_k:=\big(1+d_j-\frac{d_F}{2}\big)c^{ij}_k,\qquad\forall i,j,k=1,\dots,n.\label{eq:bs}
\end{equation}
\begin{remark}
Note that for all $j=1,\dots,n$,
\[
1+d_j-\frac{d_F}{2}=\frac{j-1}{n-1}.
\]
\end{remark}
We will prove that
\begin{theorem}\label{thm:LC}
The $b^{ij}_k$s defined in \eqref{eq:bs} satisfy the following equations
\begin{align}
\partial_k g^{ij}&= b^{ij}_k+b^{ji}_k\label{eq:LC1}\\
g^{is}b^{jk}_s &= g^{js}b^{ik}_s,\label{eq:LC2}
\end{align}
for all $i,j,k=1,\dots,n$.
\end{theorem}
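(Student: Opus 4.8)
The plan is to recognize that the claimed identities \eqref{eq:LC1}--\eqref{eq:LC2} are exactly the defining relations of the contravariant Christoffel symbols of $g$ in the Saito flat coordinates (once one accounts for the homogeneity weights), so the strategy is to show that $b^{ij}_k = \Gamma^{ij}_k$, the contravariant Christoffel symbols of $g$ already studied in Section 5.4. Indeed, in any flat coordinate system for $\eta$ the Levi-Civita connection of the pencil $g - \lambda\eta$ has contravariant components $\Gamma^{ij}_k$ satisfying $\partial_k g^{ij} = \Gamma^{ij}_k + \Gamma^{ji}_k$ (compatibility of $g$ with its own Levi-Civita connection, rewritten contravariantly, using that $\eta$-Christoffels vanish) together with the symmetry $g^{is}\Gamma^{jk}_s = g^{js}\Gamma^{ik}_s$ from \eqref{eq:id4}. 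So it suffices to prove the single identity
\[
b^{ij}_k = \Gamma^{ij}_k \qquad \text{for all } i,j,k.
\]

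For the indices with $j \neq 1$ this should be essentially a bookkeeping computation: by \eqref{eq:calti} and \eqref{eq:cons1}--\eqref{eq:cons2}, $c^{ij}_k = \eta^{is}c^j_{sk} = \eta^{is}\,\eta_{s h}\Gamma^{hj}_k / R^j_j = \Gamma^{ij}_k/R^j_j$, and since $R^j_j = (j-1)/(n-1) = 1 + d_j - d_F/2$ by the Remark after \eqref{eq:bs}, multiplying by that same factor in \eqref{eq:bs} gives $b^{ij}_k = \Gamma^{ij}_k$ on the nose. The work is concentrated in the case $j = 1$, where $R^1_1 = 0$ and the factor $1 + d_1 - d_F/2 = 0$ as well, so both sides of the desired identity are claimed to vanish: one must check $\Gamma^{i1}_k = 0$ for all $i,k$. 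By the degree count at the end of the proof of Lemma \ref{lemma6.13}, $\deg(\Gamma^{i1}_k(u)) = 2i + 2\cdot 1 - 2k - 4 = 2(i-k-1)$; moreover by \eqref{eq:id1} one has $\Gamma^{i1}_k = \Gamma^{n+1-i,\,n}_{?}$-type rearrangements, and combining the symmetry relations \eqref{eq:id1}, \eqref{eq:id3}, \eqref{eq:id4} with the observation that $\Gamma^{ij}_k$ vanishes whenever its degree forces it (i.e. when $i + j \le n$, mirroring \eqref{eq:ineq} for the $c$'s) should pin down $\Gamma^{i1}_k = 0$. So $b^{i1}_k = 0 = \Gamma^{i1}_k$, and with the $j\neq 1$ case this gives $b^{ij}_k = \Gamma^{ij}_k$ identically.

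Granting $b^{ij}_k = \Gamma^{ij}_k$, equation \eqref{eq:LC1} is precisely the standard relation expressing metricity of $g$ with respect to its Levi-Civita connection written with two upper indices, namely $\partial_k g^{ij} = \Gamma^{ij}_k + \Gamma^{ji}_k$, which is the contravariant form of $\nabla^g g = 0$ (here there are no $\eta$-Christoffel contributions because we are in Saito flat coordinates for $\eta$); and \eqref{eq:LC2} becomes $g^{is}\Gamma^{jk}_s = g^{js}\Gamma^{ik}_s$, which is exactly \eqref{eq:id4} from the preliminary Lemma. The main obstacle I expect is the $j=1$ case: one has to argue carefully that $\Gamma^{i1}_k \equiv 0$ using the degree bound together with the symmetry identities, since here the general formula \eqref{eq:c11} degenerates (this is the very non-regularity of the pencil), and one cannot simply invoke the regular theory of \cite{du97}; all the other steps are direct substitutions using the identities already established in the Lemma preceding Step 1 and the homogeneity normalization of Remark \ref{rem:degnor}.
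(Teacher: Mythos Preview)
Your route is genuinely different from the paper's. The paper proves \eqref{eq:LC1} and \eqref{eq:LC2} by direct computation, using the relation $g^{ij}=\eta^{il}c^j_{lm}E^m$ from \eqref{eq:affinor}, the symmetry of $\nabla c$ from Step~6, the homogeneity \eqref{eq:p2}, and the identities \eqref{eq:ginv}, \eqref{eq:inveta1} from Steps~4--5; only \emph{afterwards} does it conclude $b^{ij}_k=\Gamma^{ij}_k$ by uniqueness of the Levi--Civita connection. You reverse this: first $b=\Gamma$, then \eqref{eq:LC1}--\eqref{eq:LC2} come for free. Your $j\neq 1$ computation is clean and correct.

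The gap is entirely in the $j=1$ case, where you need $\Gamma^{i1}_k=0$, and your sketch does not get there. First, the degree criterion you quote is wrong: $\deg_p\Gamma^{ij}_k=2(i+j-k-2)$ depends on $k$, so ``$i+j\le n$'' is not the vanishing condition (for instance $\Gamma^{n1}_1$ has nonnegative degree for $n\ge2$). Second, identity \eqref{eq:id3} as stated in the paper is written with division by $R^h_h$ and $R^m_m$, and the paper is careful to invoke it only when \emph{both} superscripts differ from $1$; you cannot simply plug in $h=1$. What you actually need is the multiplicative form $R^m_m\Gamma^{mh}_k=R^h_h\Gamma^{hm}_k$, which for $h=1$, $m=i\neq1$ gives $R^i_i\Gamma^{i1}_k=0$ and hence $\Gamma^{i1}_k=0$, the case $i=1$ being handled by the negative degree $-2k$. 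That multiplicative identity does follow from the homogeneous flat-pencil axioms without regularity, but you would have to isolate and justify it. An independent fix, using only what the paper has already established: from \eqref{eq:affinor1} one gets $\Gamma^{l1}_kE^k=R^1_1\,g^{l1}=0$; by \eqref{eq:simgamma} the $1$-form $\Gamma^{l1}_k\,dt^k$ is closed, hence equals $d\Phi^l$ for a polynomial $\Phi^l$ homogeneous of normalized degree $d_{l-1}$, and then $E(\Phi^l)=0$ forces $\Phi^l$ constant for $l\neq1$, so $\Gamma^{l1}_k=0$.

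Once the $j=1$ case is closed rigorously, your approach is shorter and does not consume the material of Steps~4--6; the paper's approach trades that economy for never having to confront the degeneracy of $R$ head-on.
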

To prove this statement we need a couple of preliminary results which we enclose in the following lemmata.

\begin{lemma} Let $c$ the $(1,2)$-tensor field defined by \eqref{eq:cons1}, \eqref{eq:cons3} and \eqref{eq:cons4}. Then
\begin{equation}
\mathcal L_E c=c.\label{eq:chomo}
\end{equation}
\end{lemma}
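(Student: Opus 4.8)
The plan is to verify \eqref{eq:chomo} componentwise in the Dubrovin-Saito flat coordinates $(t^1,\dots,t^n)$. The first ingredient is that in these coordinates the Euler vector field is linear and diagonal, $E=\sum_{i=1}^n d_i\,t^i\partial_i$ with $d_i=\frac{i}{n-1}$: this follows from Remark \ref{rem:hom} applied to the homogeneous coordinate system $t^i$ (which has $p$-degree $2i$, see Proposition \ref{prop:flatcoo}) together with the normalization \eqref{eq:degh}. For a $(1,2)$-tensor field the Lie derivative along $E$ reads
\[
(\mathcal L_E c)^i_{jk}=E^m\partial_m c^i_{jk}-(\partial_m E^i)\,c^m_{jk}+(\partial_j E^m)\,c^i_{mk}+(\partial_k E^m)\,c^i_{jm},
\]
and since $\partial_m E^i=d_i\delta^i_m$ the three algebraic terms collapse to $(-d_i+d_j+d_k)\,c^i_{jk}$ (no summation over the free indices).

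The second ingredient is the homogeneity of the structure constants. By the degree count at the end of the proof of Lemma \ref{lemma6.13} (for the $c^i_{jk}$ defined through \eqref{eq:cons1} and \eqref{eq:cons3}) and by the explicit expression \eqref{eq:cons4} for $c^1_{nn}=\frac{n-1}{t^n}$, every $c^i_{jk}$ is a homogeneous function of the $p$-variables — polynomial except for $c^1_{nn}$ — of normalized degree $1+d_i-d_j-d_k=\frac{n-1+i-j-k}{n-1}$; see also \eqref{eq:degc} and Remark \ref{rem:degnor}. Since the $t^i$ are themselves homogeneous polynomials in the $p$-variables, each $c^i_{jk}$ is homogeneous of that same degree as a function of the $t$'s, so Euler's identity yields $E^m\partial_m c^i_{jk}=(1+d_i-d_j-d_k)\,c^i_{jk}$. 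In the exceptional case $(i,j,k)=(1,n,n)$ this degree is $-d_n<0$, but Euler's identity still applies there since $E(1/t^n)=-(t^n)^{-2}E(t^n)=-d_n/t^n$.

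Substituting this into the displayed formula, the contributions of the $d$'s telescope:
\[
(\mathcal L_E c)^i_{jk}=\big[(1+d_i-d_j-d_k)+(-d_i+d_j+d_k)\big]c^i_{jk}=c^i_{jk},
\]
which is \eqref{eq:chomo}. I do not expect a genuine obstacle here: the argument is essentially a bookkeeping of degrees already recorded in Lemma \ref{lemma6.13} and Remark \ref{rem:degnor}, and the only point deserving explicit care is the non-polynomial component $c^1_{nn}$, for which one has to note that the homogeneity (Euler) identity persists with the relevant negative degree.
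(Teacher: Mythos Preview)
Your proof is correct and follows essentially the same approach as the paper: both compute $\mathcal L_E c$ in Saito flat coordinates using the linear diagonal form of $E$ together with the homogeneity degree $\text{deg}(c^i_{jk})=\frac{n-1+i-j-k}{n-1}$, so that the $d$-contributions cancel to leave $c^i_{jk}$. The only difference is cosmetic---the paper phrases the computation via the Leibniz rule on $\partial_i\otimes dt^j\otimes dt^k$, while you write out the component formula for the Lie derivative---and you add an explicit check for the non-polynomial component $c^1_{nn}$, which the paper leaves implicit.
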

\begin{proof}
If $c=c^i_{jk}\partial_i\otimes dt^j\otimes dt^k$, since 
\begin{equation}
\mathcal L_E dt^i=\frac{i}{n-1}\,dt^i,\quad\mathcal L_E \partial_i=-\frac{i}{n-1}\,\partial_i\quad\text{and}\quad \text{deg}\; (c^i_{jk})=\frac{n-1+i-j-k}{n-1},\label{eq:degc1}
\end{equation}
see \eqref{eq:degc} above, one has
\begin{align*}
\mathcal L_E c&=(\mathcal L_E c^i_{jk})\partial_i\otimes dt^j\otimes dt^k+c^i_{jk}(\mathcal L_E\partial_i)\otimes dt^j\otimes dt^k\\
&+c^i_{jk}\partial_i\otimes(\mathcal L_E dt^j)\otimes dt^k+c^i_{jk}\partial_i\otimes dt^j\otimes(\mathcal L_E dt^k)\\
&\stackrel{\eqref{eq:degc1}}{=}c.
\end{align*}
For later use, we observe that from the very last equality, solving for $(\mathcal L_E c^i_{jk})\partial_i\otimes dt^j\otimes dt^k$ one obtains:
\beq\label{eq:p2}
E^m\d_mc^j_{lk}=c^j_{lk}+d_jc^j_{lk}-d_lc^j_{lk}-d_kc^j_{lk},
\eeq
where the $d_j$s were defined in \eqref{eq:degh}.
\end{proof}

Once these preliminary results are settled, one can prove Theorem \ref{thm:LC}.

\begin{proof}
First note that \eqref{eq:affinor} implies

\begin{equation}
g^{hk}=\eta^{ki}c^h_{is}E^s.\label{eq:gEta}
\end{equation}
Then we compute
\beq\label{aux22}
\d_k(g^{ij})=\d_k(\eta^{il}c^j_{lm}E^m)=\eta^{il}(\d_k c^j_{lm})E^m+\eta^{il}c^j_{lm}\d_kE^m
\stackrel{\eqref{symnablac}}{=}\eta^{il}(E^m\d_mc^j_{lk})+d_k\eta^{il}c^j_{lk}.
\eeq
Using \eqref{eq:p2} to substitute  $E^m\d_mc^j_{lk}$ in  \eqref{aux22}, we obtain 
\begin{eqnarray}
\d_k(g^{ij})=
\eta^{il}(c^j_{lk}+d_jc^j_{lk}-d_lc^j_{lk}).\label{eq:pat}
\end{eqnarray}
Since the pencil $(g,\eta)$ is homogeneous and exact, 
\[
\mathcal L_E\eta=(d-2)\eta=(1-d_F)\eta,
\] 
see \eqref{eq:hometa} (here $\eta$ denotes the contravariant metric). On the other hand, since $\eta$ is constant when written in the Saito flat coordinates, working with the covariant metric, one has
\begin{align*}
0=\mathcal L_E \big(\eta(\partial_i,\partial_l)\big)&=(\mathcal L_E\eta)(\partial_i,\partial_l)+\eta(\mathcal L_E\partial_i,\partial_l)+\eta(\partial_i,\mathcal L_E\partial_l)\\
&=(d_F-1)\eta(\partial_i,\partial_l)-\partial_iE^m\eta(\partial_m,\partial_l)-\partial_lE^m\eta(\partial_i,\partial_m)\\
&=(d_F-1)\eta^{il}-d_i\eta^{il}-d_l\eta^{il},
\end{align*}
which entails
\[
-\eta^{il}d_l=\eta^{il}(-d_F+1+d_i).
\]
Inserting this identity in \eqref{eq:pat}, one gets
\begin{eqnarray*}
\d_k(g^{ij})=\eta^{il}(2+d_i+d_j-d_F)c^j_{lk}.
\end{eqnarray*}
This should be compared with
\[b^{ij}_k+b^{ji}_k=\left(1+d_j-\f{d_F}{2}\right)c^{ij}_k+\left(1+d_i-\f{d_F}{2}\right)c^{ji}_k.\]
To this end, first one observes that the invariance of the metric $\eta$ w.r.t. the product implies
\begin{equation}\label{eq:simmca}
c^{mh}_k=c^{hm}_k,\,\forall h,m,k=1,\dots,n.
\end{equation}
In fact
\[c^{mh}_k=\eta^{hi}\eta^{mj}\eta_{il}c^l_{jk}=\eta^{hi}\eta^{mj}\eta_{jl}c^l_{ik}=c^{hm}_k.\]
From this one concludes that
\[b^{ij}_k+b^{ji}_k=\left(2+d_i+d_j-d_F\right)c^{ij}_k.\]
To prove \eqref{eq:LC2} we use \eqref{eq:calti}, \eqref{eq:bs}, \eqref{eq:gEta} and  we compute
\begin{eqnarray*}
g^{is}b^{jk}_s&\stackrel{\eqref{eq:bs}}{=}&\eta^{im}c^s_{mh}E^h\left(1+d_k-\f{d_F}{2}\right)\eta^{jl}c^k_{ls}\\
&\stackrel{\eqref{eq:affinor}}{=}&\eta^{im}g^{sh}\eta_{hm}\left(1+d_k-\f{d_F}{2}\right)\eta^{jl}c^k_{ls}\\
&\stackrel{\eqref{eq:ginv}}{=}&\eta^{im}g^{sk}\eta_{hm}\left(1+d_k-\f{d_F}{2}\right)\eta^{jl}c^h_{ls}\\
&\stackrel{\eqref{eq:inveta1}}{=}&\eta^{im}g^{sk}\eta_{hl}\left(1+d_k-\f{d_F}{2}\right)\eta^{jl}c^h_{ms}\\
&\stackrel{\eqref{eq:ginv}}{=}&\eta^{im}g^{sh}\eta_{hl}\left(1+d_k-\f{d_F}{2}\right)\eta^{jl}c^k_{ms}\\
&\stackrel{\eqref{eq:affinor}}{=}&\eta^{jl}c^s_{lh}E^h\left(1+d_k-\f{d_F}{2}\right)\eta^{im}c^k_{ms}\stackrel{\eqref{eq:bs}}{=}g^{js}b^{ik}_s.
\end{eqnarray*}
\end{proof}

Theorem \ref{thm:LC} implies that 

\begin{proposition}
The $b^{ij}_k$s defined in \eqref{eq:bs} are the contravariant Christoffel symbols of the metric $g$ in the Saito flat coordinates, i.e. 
\begin{equation}
b^{ij}_k={\Gamma}^{ij}_k,\,\forall i,j,k=1,\dots,n.
\end{equation}
\end{proposition}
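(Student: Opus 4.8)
The statement to prove is that $b^{ij}_k = \Gamma^{ij}_k$, where the left side is defined in \eqref{eq:bs} and the right side denotes the contravariant Christoffel symbols of $g$ in Saito flat coordinates. The strategy is to invoke a uniqueness characterization: in a fixed flat coordinate system for $\eta$, the contravariant Christoffel symbols $\Gamma^{ij}_k$ of $g$ are the \emph{unique} functions satisfying the two identities in Theorem~\ref{thm:LC}, namely $\partial_k g^{ij} = \Gamma^{ij}_k + \Gamma^{ji}_k$ together with the symmetry $g^{is}\Gamma^{jk}_s = g^{js}\Gamma^{ik}_s$. Since Theorem~\ref{thm:LC} asserts precisely that the $b^{ij}_k$s satisfy these same two equations, the conclusion follows once uniqueness is established.

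First I would recall why $\Gamma^{ij}_k$ themselves satisfy those two equations. The relation $\partial_k g^{ij} = \Gamma^{ij}_k + \Gamma^{ji}_k$ is the contravariant form of metric compatibility $\nabla^g g = 0$ written in the coordinates where $\eta$ (not $g$) is constant; more precisely, $\Gamma^{ij}_k = -g^{is}\Gamma^j_{sk}$ and compatibility of the Levi-Civita connection of $g$ gives $\partial_k g^{ij} = g^{is}\Gamma^j_{sk} \cdot(-1)\cdot(-1)\ldots$ — the precise bookkeeping is the standard one recalled around \eqref{eq:c2}–\eqref{eq:c3} and in \cite{du97}. The symmetry $g^{is}\Gamma^{jk}_s = g^{js}\Gamma^{ik}_s$ is exactly \eqref{eq:id4}, already proved. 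So both $b$ and $\Gamma$ (for $g$) solve the linear system
\[
X^{ij}_k + X^{ji}_k = \partial_k g^{ij}, \qquad g^{is}X^{jk}_s = g^{js}X^{ik}_s .
\]

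The heart of the argument is uniqueness of solutions to this system. Set $Y^{ij}_k := b^{ij}_k - \Gamma^{ij}_k$. Then $Y^{ij}_k + Y^{ji}_k = 0$ (antisymmetry in the two upper indices) and $g^{is}Y^{jk}_s = g^{js}Y^{ik}_s$ (symmetry in the first upper index with the lower summed one). The classical Christoffel-symbol argument now applies verbatim: from these two relations one derives $Y = 0$ by the usual permutation-of-indices trick. Concretely, lower all indices using $g$ (which is invertible on the open set where $E\circ$ is invertible, equivalently where $\det g \neq 0$), obtaining a tensor $Y_{ijk}$ antisymmetric in $(i,j)$ and symmetric in $(i,k)$; a tensor that is simultaneously antisymmetric in one pair and symmetric in another overlapping pair must vanish (cycle: $Y_{ijk} = -Y_{jik} = -Y_{jki} = Y_{kji} = Y_{kij} = -Y_{ikj} = -Y_{ijk}$, hence $2Y_{ijk}=0$). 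This gives $b^{ij}_k = \Gamma^{ij}_k$ wherever $g$ is nondegenerate, and since both sides are polynomial in the $t$-coordinates (the $c^i_{jk}$s are polynomial by the degree count after Lemma~\ref{lemma6.13}, and the $\Gamma^{ij}_k$ are polynomial by Lemma~\ref{poly1.lemma}), the identity extends to all of $\mathbb{C}^n/B_n$ by continuity.

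The main obstacle is making sure the lowering-of-indices step is legitimate: $g$ is a cometric that degenerates on the discriminant locus (the image of the coordinate hyperplanes), so one genuinely works on the open dense set where $\det(g^{ij}) \neq 0$ and then propagates the polynomial identity. A secondary point requiring care is the precise sign/convention in the compatibility identity $\partial_k g^{ij} = \Gamma^{ij}_k + \Gamma^{ji}_k$ for the contravariant Christoffel symbols — one should cite \cite{du97} (or re-derive it from \eqref{eq:c2}, \eqref{eq:c3}) and check it is consistent with the normalization used in \eqref{eq:bs}. Modulo these conventions, the proof is a direct application of Theorem~\ref{thm:LC} plus the uniqueness lemma for the contravariant Christoffel system, exactly as in the standard Dubrovin–Saito setting.
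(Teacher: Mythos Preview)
Your approach is correct and is exactly what the paper does (its proof is literally the one line ``Theorem~\ref{thm:LC} implies that'', relying on the same uniqueness-by-permutation argument for the contravariant Christoffel system that you spell out). One small slip to fix: not every $c^i_{jk}$ is polynomial (recall $c^1_{nn}=(n-1)/t_n$), but this does not damage your continuation step, since $b^{i1}_k=R^1_1\,c^{i1}_k=0$ identically and hence all the $b^{ij}_k$ themselves are polynomial, so the density argument goes through.
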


To conclude the proof of Theorem \ref{thm:maint} we are left to show that the product defined by the $c^i_{jk}$s is associative. 

\subsection{Step 7: associativity of the product.}
We start noticing that since $(g,\eta)$ is a flat pencil, expressing the conditions of zero-curvature for the Levi-Civita connection defined by $g_\lambda:=g-\lambda\eta$ in the Saito flat coordinates, one obtains the following set of equations 
\begin{eqnarray}
\label{C1}
&&\d_sb^{jk}_l-\d_lb^{jk}_s=0,\\
\label{C2}
&&b^{ij}_sb^{sk}_l-b^{ik}_sb^{sj}_l=0.
\end{eqnarray}
The first set of conditions \eqref{C1} does not provide additional information since it follow from the symmetry (in the lower indices) of $\nabla^{\eta} c$. Indeed
\begin{equation}
\label{c1}
\left(1+d_k-\f{d_F}{2}\right)\left(\d_sc^{jk}_l-\d_lc^{jk}_s\right)=R^k_k\eta^{jh}\left(\d_sc^{k}_{hl}-\d_lc^{k}_{hs}\right)=0.
\end{equation}
Let us consider the second set of conditions \eqref{C2}. First we note that using the \eqref{eq:bs} and recalling that $R^k_k=(1+d_k-\frac{d_F}{2})$ for all $k$, these conditions can be rewritten as follows
\begin{equation}
\label{c2}
R^k_kR^j_j(c^{ij}_sc^{sk}_l-c^{ik}_sc^{sj}_l)\stackrel{\eqref{eq:simmca}}{=}R^k_kR^j_j(c^{ji}_sc^{ks}_l-c^{ki}_sc^{js}_l)=R^k_kR^j_j\eta^{jh}\eta^{km}(c^i_{hs}c^s_{ml}-c^i_{ms}c^s_{hl})=0.
\end{equation}
The quadratic conditions \eqref{c2}
 entail the associativity of the product defined by the $c^i_{jk}$s, that is 
\[c^i_{hs}c^s_{ml}=c^i_{ms}c^s_{hl},\]
but when one of the index $m$, $h$ is equal to $n$ (of course, if both indices are equal to $n$ the statement is trivially true).

For this reason, to conclude the proof we are left to show that
\begin{equation}\label{eq:final}
c^i_{nl}c^l_{km}=c^i_{kl}c^l_{nm},
\end{equation}
for all possible values of $i,k,m$. It is worth noticing that if $k=n$ the previous identity is trivially satisfied.
We start checking that
\beq\label{c4aux}
c^i_{nl}c^l_{km}-c^i_{kl}c^l_{nm}=0, \quad (m,k,i)\neq (n,n,1).
\eeq
First recall that, since $b^{ij}_k=\Gamma^{ij}_k$, we have  $c^i_{jk}=\frac{b^{n+1-j,i}_k}{R^i_i}=\frac{b^{n+1-k,i}_j}{R^i_i}$ by \eqref{eq:cons2}. By a direct computation 
\begin{eqnarray*}
&&c^i_{nl}c^l_{km}-c^i_{kl}c^l_{nm}=c^i_{n1}c^1_{km}-c^i_{k1}c^1_{nm}+\sum_{l\ne 1}\left(c^i_{nl}c^l_{km}-c^i_{kl}c^l_{nm}\right)\\
&&\stackrel{\eqref{eq:cons3},\;\eqref{eq:cons2}}{=}c^{i}_{n1}c^{n+1-m}_{nk}-c^{i}_{k1}c^{n+1-m}_{nn}+\sum_{l\ne 1}\left(\f{b^{1i}_l}{R^i_i}\f{b^{n+1-m,l}_k}{R^l_l}-\f{b^{n+1-k,i}_l}{R^i_i}\f{b^{n+1-m,l}_n}{R^l_l}\right)\nonumber\\
&&\stackrel{\eqref{eq:cons2},\;\eqref{eq:id3}}{=}\f{b^{1i}_1}{R^{i}_{i}}
\f{b^{1,n+1-m}_k}{R^{n+1-m}_{n+1-m}}-\f{b^{ni}_k}{R^{i}_{i}}
\f{b^{1,n+1-m}_n}{R^{n+1-m}_{n+1-m}}
+\sum_{l\ne 1}\left(\f{b^{1i}_l}{R^i_i}\f{b^{l,n+1-m}_k}{R^{n+1-m}_{n+1-m}}-\f{b^{n+1-k,i}_l}{R^i_i}\f{b^{l,n+1-m}_n}{R^{n+1-m}_{n+1-m}}\right)\nonumber\\
&&=\f{b^{1i}_1}{R^{i}_{i}}
\f{b^{1,n+1-m}_k}{R^{n+1-m}_{n+1-m}}-\f{b^{ni}_k}{R^{i}_{i}}
\f{b^{1,n+1-m}_n}{R^{n+1-m}_{n+1-m}}
+\sum_{l\ne 1}\left(\f{b^{1i}_l}{R^i_i}\f{b^{l,n+1-m}_k}{R^{n+1-m}_{n+1-m}}-\f{b^{n+1-l,i}_k}{R^i_i}\f{b^{1,n+1-m}_{n+1-l}}{R^{n+1-m}_{n+1-m}}\right)\\
&&=\f{b^{1i}_1}{R^{i}_{i}}
\f{b^{1,n+1-m}_k}{R^{n+1-m}_{n+1-m}}-\f{b^{ni}_k}{R^{i}_{i}}
\f{b^{1,n+1-m}_n}{R^{n+1-m}_{n+1-m}}
+\sum_{l\ne 1}\f{b^{1i}_l}{R^i_i}\f{b^{l,n+1-m}_k}{R^{n+1-m}_{n+1-m}}-\sum_{l\ne n}\f{b^{li}_k}{R^i_i}\f{b^{1,n+1-m}_{l}}{R^{n+1-m}_{n+1-m}}\\
&&=\f{b^{1i}_l}{R^i_i}\f{b^{l,n+1-m}_k}{R^{n+1-m}_{n+1-m}}-\f{b^{li}_k}{R^i_i}\f{b^{1,n+1-m}_{l}}{R^{n+1-m}_{n+1-m}}=0.\nonumber
\end{eqnarray*} 

\begin{remark}
In the previous computation, the fourth line follows from the third one, applying \eqref{eq:id1} to both $b^{n+1-k,i}_l$ and $b^{l,n+1-m}_n$. In the fifth line, the second summation stems after declaring $s=n+1-l$ (and then $s=l$) in the second summand of the summation of the fourth line.
\end{remark}

If $(m,k)\ne (n,n)$ and $i=1$, \eqref{eq:final} becomes
\beq\label{c2aux}
c^1_{nl}c^l_{km}=c^1_{kl}c^l_{nm}.
\eeq
By \eqref{c2}, we know that 
\beq 
c^1_{il}c^l_{km}=c^1_{kl}c^l_{im}, \quad i=1,\dots n-1
\eeq 
since we are also assuming $k\neq n$, $m\neq n$. Therefore, \eqref{c2aux} can be
 rewritten in the following equivalent form 
\[(c^1_{il}c^l_{km}-c^1_{kl}c^l_{im})E^i=0,\]
since, for what already proven, the only non-zero contribution in this sum is the one with $i=n$.

Using \eqref{eq:affinor}, one gets
\begin{align}
(c^1_{il}c^l_{km}-c^1_{kl}c^l_{im})E^i &= c^1_{il}E^ic^l_{km}-c^1_{kl}c^l_{im}E^i \nonumber\\
&= g^{1s}\eta_{sl}c^l_{km}-c^1_{kl}g^{ls}\eta_{sm}\nonumber\\
&\stackrel{\eqref{eq:ginv}}{=}g^{1s}\eta_{ml}c^l_{ks}-c^s_{kl}g^{l1}\eta_{sm}\label{eq:eco}\\
&=0,\nonumber
\end{align}
whose last equality is obtained changing $s$ with $l$ in the second summand of \eqref{eq:eco}. Therefore \eqref{c2aux} holds. 
\newline
\newline

As already observed above, if $m=k=n$ (any $i$) \eqref{eq:final} becomes
\[
c^i_{nl}c^l_{nn}-c^i_{nl}c^l_{nn}=0.
\]
We are left to consider the case $m=n$ and $k\neq n$ (any $i$), that is we need to prove 
\beq\label{c7}
c^i_{nl}c^l_{kn}-c^i_{kl}c^l_{nn}=0, \quad k\neq n, \quad \text{ any } i.
\eeq
We first observe that $c^i_{nl}c^l_{ks}-c^i_{kl}c^l_{ns}=0$ for $s=1,\dots, n-1$, for any $i$ since for $i\neq 1$ this is \eqref{c4aux}, while for $i=1$ this is \eqref{c2aux}. Therefore we can rewrite \eqref{c7} in the equivalent form,
\[c^i_{nl}c^l_{ks}E^{s}-c^i_{kl}c^l_{ns}E^{s}=0,\]
which, together with \eqref{eq:affinor}, yields
\[c^i_{nl}g^{ls}\eta_{sk}-c^i_{kl}g^{ls}\eta_{sn}\stackrel{\eqref{eq:ginv}}{=}c^s_{nl}g^{li}\eta_{sk}-c^s_{kl}g^{li}\eta_{sn}\stackrel{\eqref{eq:inveta1}}{=}(c^s_{nl}\eta_{sk}-c^s_{kl}\eta_{sn})g^{li}=0.\]
This concludes the proof of Theorem \ref{thm:maint}.

\endproof

\section{Conclusions and open problems}
In this paper, combining the procedure presented in \cite{ALcomplex}
 for complex reflection groups with a generalization of the classical Dubrovin-Saito procedure, we have obtained
 a non-standard Dubrovin-Frobenius structure on the orbit space of $B_n$, more precisely on the orbit space less the image of coordinates hyperplanes under the quotient map.
 The procedure of \cite{ALcomplex} allowed us to get explicit formulas in the cases $n=2,3,4$  while the generalized Dubrovin-Saito procedure allowed us to prove the existence of this structure for arbitrary $n$. 
 Two main questions are still open:
\newline
\newline
- For $n=2,3,4$ the dual product is defined by
\[*=\f{1}{N}\sum_{H\in \mathcal{H}}\frac{d\alpha_H}{\alpha_H}\otimes\pi_H\]
with  $\sigma_H=0$ for all the mirrors in the Orbit I and $\sigma_H=1$ for all the mirrors in the Orbit II. Is it true for arbitrary $n$?
\newline
\newline
- For $n=2,3,4$ the Dubrovin-Frobenius prepotentials 
\begin{eqnarray*}
F_{B_2}&=& \f{1}{2}u_1^2u_2\pm\f{1}{2}u_2^2\left(\ln{u_2}-\f{3}{2}\right).\\
F_{B_3}&=&\f{1}{6}u_2^3+u_1u_2u_3+\f{1}{12}u_1^3u_3-\f{3}{2}u_3^2+u_3^2\ln{u_3},\\
F_{B_4}&=&\f{1}{108}u_1^4u_4+\f{1}{6}u_1^2u_2u_4-\f{1}{72}u_2^4+u_1u_3u_4+\f{1}{2}u_2^2u_4+\f{1}{2}u_2u_3^2-\f{9}{4}u_4^2+\f{3}{2}u_4^2\ln{u_4},
\end{eqnarray*}
coincide 
 with the solutions of WDVV equations associated with constrained KP equation (see \cite{DaZ}), in particular the case $n=2$ is related to the defocusing NLS equation.
 Is it true for arbitrary $n$?
\newline
\newline
In both cases we expect that the answer is positive.

\end{document}